\newtheorem{theorem}{Theorem}[section]
\newtheorem{lemma}[theorem]{Lemma}
\theoremstyle{definition}
\newtheorem{definition}[theorem]{Definition}
\theoremstyle{remark}
\newtheorem{remark}[theorem]{Remark}
\numberwithin{equation}{section}
\numberwithin{equation}{section}
\newsavebox{\savepar}
\begin{document}
	
	\title{Existence and H\"{o}lder regularity of infinitely many solutions to a $p$-Kirchhoff type problem involving a singular nonlinearity without the Ambrosetti-Rabinowitz (AR) condition}
	\author{Debajyoti Choudhuri$^\ddagger$ \\
		\small{$^\ddagger$Department of Mathematics, National Institute of Technology Rourkela, India}\\
		\small{Emails: dc.iit12@gmail.com}}
	\date{}
	\maketitle
\begin{abstract}
\noindent We carry out an investigation of the existence of infinitely many solutions to a fractional $p$-Kirchhoff type problem with a singularity and a superlinear nonlinearity with a homogeneous Dirichlet boundary condition. Further the solution(s) will be proved to be bounded and a weak comparison principle has also been proved. A {\it `$C^1$ versus $W_0^{s,p}$'} analysis has also been discussed.
\begin{flushleft}
{\bf Keywords}:~  singularity, non-Ambrosetti-Rabinowitz condition, Cerami condition, multiplicity, symmetric Mountain-Pass theorem.\\
{\bf AMS Classification}:~35J35, 35J60.
\end{flushleft}
\end{abstract}
\section{Introduction}
Off late, the problems involving a nonlocal and fractional operators have become hugely popular area of investigation owing to its manifold applications, viz. stratified materials, population dynamics, continuum mechanics, water waves, minimal surface problems etc. Interested readers may refer to \cite{fisval,ian,barr,moli1,moli2,tang,serva1} and the references therein.\\
The problem addressed in this article is as follows.
{\small\begin{align}\label{main}
\left(a+b\int_{\mathbb{R}^{2N}}|u(x)-u(y)|^{p}K(x-y)dxdy\right)\mathfrak{L}_p^su-\lambda g(x) |u|^{p-2}u=&\mu \frac{h(x)u}{|u|^{\gamma+1}}+f(x,u),~\text{in}~\Omega\nonumber\\
u=&0,~\text{in}~\mathbb{R}^N\setminus\Omega
\end{align}}
where $\lambda,\mu>0$, {$g,h\geq 0$} are functions that are defined and bounded over $\Omega$. Here $\Omega\subset\mathbb{R}^N$ is a bounded domain with Lipschitz boundary $\partial\Omega$ for $N\geq 2$, $a>0,b\geq0$, $0<\gamma,s<1<p<\infty$, $sp<N$. The function $f$ is a {Carath\'{e}odory} function and the operator $\mathfrak{L}_p^s$ is defined as 
$$\mathfrak{L}_p^su=2\underset{\epsilon\rightarrow 0^+}{\lim}\int_{\mathbb{R}^N\setminus B_{\epsilon}(x)}|u(x)-u(y)|^{p-2}(u(x)-u(y))K(x-y)dy$$
for all $x\in\mathbb{R}^N$ where {$B_{\epsilon}(x)=\{y:|y-x|<\epsilon\}$}. The function $K:\mathbb{R}^N\setminus\{0\}\rightarrow(0,\infty)$ is measurable with the following properties:
\begin{align}\label{kernel}
\begin{split}
&(i)~\mathfrak{\rho}K\in L^1(\mathbb{R}^N)~\text{where}~\mathfrak{\rho}(x)=\min\{|x|^2,1\} \\
&(ii)~\text{There exists}~ \delta>0~\text{such that}~K(x)\geq \delta|x|^{-N-ps}~\text{for all}~x\in\mathbb{R}^N\\
&(iii)~K(x)=K(-x)~\text{for all}~x\in\mathbb{R}^N\setminus\{0\}.
\end{split}
\end{align}
One can retrieve the fractional $p$-Laplace operator if the {\it `kernel'} $K$ is chosen to be $K(x)=|x-y|^{-N-ps}$. The discussion in Section 4.1 uses following condition $(P)$.
$$(P'):~\delta_1|x|^{-N-ps}\geq K(x)\geq \delta_2|x|^{-N-ps}$$ for all $x\in\mathbb{R}^N$ where $\delta_1,\delta_2>0$. In general, it is a practice to denote the Kirchhoff function as $\mathfrak{M}$. In the current case $\mathfrak{M}(t)=a+bt^p$ where $a,b \geq 0$ with $a+b>0$. Such a Kirchhoff function is termed as the nondegenerate ones. Therefore when $\mathfrak{M}(t)\equiv 1$, $\lambda=0$, $p=2$, $g(x)=1$, {$h(x)=1$} a.e. in $\Omega$, we reduce to the problem in \eqref{main} to
\begin{align}\label{submain}
\begin{split}
(-\Delta)^su&=\mu u^{-\gamma}+f(x,u),~\text{in}~\Omega\\
u&>0,~\text{in}~\Omega \\
u&=0,~\text{in}~\mathbb{R}^N\setminus\Omega. 
\end{split}
\end{align}
For further details on the problem in \eqref{submain} one may refer to \cite{serva1}. The problem in \eqref{submain} with $\mu=0$ has been addressed in \cite{serva2}. The authors have used a variational technique to guarantee the existence of multiple solutions. Further  results on existence of multiple solutions can be found in \cite{ghanmi2016nehari, mukherjee2016dirichlet}. In most of these studies, the authors obtained two distinct weak solutions. Readers may refer to \cite{saoudi_sugg_3, saoudi_sugg_1, saoudisugg2, saoudi_sugg_4, ming3, xiang4} for ideas and techniques developed in order to guarantee the existence of infinitely many solutions to nonlocal elliptic problems driven by a singularity.\\
Meanwhile, we direct the readers to a variety of forms for the function $\mathfrak{M}$ \cite{bin,alv,moli3, zuo2, zuo1, ming1,ming2, xiang1,xiang2}. With the advent of these references and with the help of fountain theorem, the authors in \cite{zuo3} have proved the existence of infinitely many solutions for a fractional $p$-Kirchhoff problem. A use of the fountain theorem has also been made in \cite{xiang3} to guarantee the existence of infintely many solutions for a fractional Kirchhoff type problem. In \cite{nya}, the authors showed the existence and multiplicity of solutions to a degenerate fractional $p$-Kirchhoff problem. Fiscella et al in \cite{fis1} dealt with Kirchhoff type equation over $\mathbb{R}^N$ involving Hardy-Sobolev nonlinearities with critical exponent. We also refer to \cite{zuo4, cap1, pucci} for a related problem. Recently, Ghosh \cite{ghosh1} has proved the existence of infinitely many solutions to a system of fractional Laplacian Kirchhoff type problem with a sublinear growth. For articles that addressed the problem of $p$-Kirchhoff system, one may refer to \cite{mish} and the references therein. A futuristic research in this direction is to consider the parabolic counterparts to these problems. The readers may refer to the work due to \cite{pan1} and the references therein. Motivated from the work due to Ren et al \cite{ren1} we will show the existence of infinitely many solutions to a $p$-Kirchhoff type problem with a superlinear growth without the AR condition. It will also be proved that the solution (if exists) is in $L^{\infty}(\Omega)$. A weak comparison principle has also been proved. A little bit of history about the AR condition  - this condition was first introduced by Ambrosetti and Rabinowitz (refer \cite{amb1}) in 1973. Thereafter this condition formed a formidable tool in the analysis of elliptic PDEs, especially to prove the boundedness of the Palais-Smale (PS) sequences for the associated energy functional to the problem. To our knowledge there is no evidence in the literature that considered a nonlocal $p$-Kirchhoff type problem with a singular nonlinearity without the AR condition. Therefore the problem considered and the results obtained here are new.
\section{A simple physical motivation}
This section is devoted to a physical motivation to the problem considered in this article. The explanation is physically heuristic but nevertheless gives a strong mathematical motivation to take up this problem. {We will restrict to the one dimensional case of the model of an elastic string of finite length fixed at both the ends.} The vertical displacement of the string will be represented by $u:[-1,1]\times[0,\infty)\rightarrow\mathbb{R}$. Then, mathematically, the end point constraints can be expressed as 
$$u(0,t)=u(2,t)=0$$ for all $t\geq 0$. In order to identify this finite string with an infinite string one can consider 
$$u(x,t)=0$$ for all $x\in\mathbb{R}\setminus[0,2]$, $t\geq 0$.
Thus, the acceleration {$\frac{\partial^2u}{\partial t^2}$} of the vertical displacement $u$ of the vibrating string must be balanced (thanks to Newton's laws) by the elastic force of the string and by the external force field $f$. Therefore we have
$$\frac{\partial^2u}{\partial t^2}=m\frac{\partial^2u}{\partial x^2}+f,~\text{for all}~x~\text{in}~[0,2],~ t\geq 0.$$
When the steady case is considered, we have 
$$ m\frac{\partial^2u(x)}{\partial x^2}=f(x),~\text{in}~[0,2].$$
We quote here G.F. Carrier \cite{carrier}  - {\it it is well known that the classical linearized analysis of the vibrating string can lead to results which are reasonably accurate only when the minimum (rest position) tension and the displacements are of such magnitude that the relative change in tension during the motion is small}. Taking this into account one can suppose that the tension due to small deformation is linear in form, then we have the following expression.
$$\mathfrak{M}(l)=m_0+2Cl$$
where $l$ is the increment in the length of the string with respect to its mean position, i.e.
$$l=\int_{0}^{2}\sqrt{1+\left(\frac{\partial u}{\partial x}\right)^2}dx-2,$$ $C>0$ is a constant of proportionality. 
Thus for small deformations we have 
$$\sqrt{1+\left(\frac{\partial u}{\partial x}\right)^2}=1+\frac{1}{2}\left(\frac{\partial u}{\partial x}\right)^2.$$
Hence,
$$l=\frac{1}{2}\left(\frac{\partial u}{\partial x}\right)^2.$$
 Therefore, the problem now boils down to a Kirchhoff type problem
\begin{eqnarray}
 \left(m_0+C\frac{1}{2}\left(\frac{\partial u}{\partial x}\right)^2\right)\frac{\partial^2u(x)}{\partial x^2}&=&f(x);~\text{for all}~x~\text{in}~[0,2],\nonumber\\
u(x)&=&0,~\text{in}~\mathbb{R}\setminus[0,2].
\end{eqnarray}
In other words it models the vibration of a string.  

\section{Technical preliminaries and functional analytic set up}
We begin by giving the conditions of the function $f$ which is assumed to have a superlinear growth. We quickly will recall the AR condition, which is as follows.
\begin{align}\label{ARcond}
(AR):~&\text{there exist constants}~r>0, 1<\eta<\theta~\text{such that}\nonumber\\
&0<\theta F(x,t)\leq tf(x,t)~\text{for any}~x\in\Omega, t\in\mathbb{R}~\text{and}~|t|\geq r.
\end{align}
Here $F(x,t)=\int_{0}^{t}f(x,t)dt$. From \eqref{ARcond} we have that $F(x,t)\geq c_1|t|^{\theta}-c_2$, for any $(x,t)\in\Omega\times\mathbb{R}$, where $\theta>\eta$ for $c_1, c_2>0$ constants. However, there are still many functions which are superlinear at infinity yet not satisfying the AR condition. We make note of another form which is given by 
\begin{eqnarray}\label{ARanotherform}
\lim_{|t|\rightarrow\infty}\frac{F(x,t)}{|t|^{\eta}}=\infty~\text{uniformly for}~x\in\Omega.
\end{eqnarray} 
{A closer observation shows that the nonlinearity $f$, under the condition} \eqref{ARanotherform}, {is also superlinear at infinity. Clearly, for example, the function $f(x,t)=t^{\eta-1}\log(1+t)$ satisfies} \eqref{ARanotherform} {but does not satisfy} \eqref{ARcond}. Some important works that has proved the existence of infinitely many solutions for nonlocal equations of fractional Laplacian type driven by a superlinear term and with homogeneous Dirichlet boundary data, but without the AR condition can be found in \cite{thin1}. We now give the assumptions made on the function $f:\Omega\times\mathbb{R}\rightarrow\mathbb{R}$.
\begin{eqnarray}\label{f_cond}
&(f_1)& \exists C>0~\text{and}~q\in(p,p_s^*)~\text{such that}~|f(x,t)|\leq C(1+|t|^{q-1})\nonumber\\
&(f_2)& f(x,-t)=-f(x,t) \forall (x,t)\in\Omega\times\mathbb{R}~\text{and}~f\in C(\Omega\times\mathbb{R},\mathbb{R}) \nonumber\\
&(f_3)& \lim_{|t|\rightarrow\infty}\frac{F(x,t)}{|t|^{2p}}=\infty~\text{uniformly for all}~x\in\Omega\nonumber\\
&(f_4)& \lim_{|t|\rightarrow 0}\frac{f(x,t)}{|t|^{p-1}}=0~\text{uniformly for all}~x\in\Omega\nonumber\\
&(f_5)& \exists \overline{t}>0~\text{such that}~t\mapsto \frac{f(x,t)}{t^{2p-1}}~\text{is decreasing if}~t\leq-\overline{t}<0\nonumber\\
& &~\text{and increasing if}~t\geq\overline{t}>0\forall x\in\Omega\nonumber\\
&(f_6)& \exists \sigma\geq 1~\text{and}~T\in L^1(\Omega)~\text{satisfying}~T(x)\geq 0~\text{such that}~\mathfrak{G}(x,s)\leq\sigma\mathfrak{G}(x,t)+T(x)\nonumber\\
& &\forall x\in\Omega~\text{and}~0\leq|s|\leq|t|,~\text{where}~\mathfrak{G}(x,t)=\frac{1}{2p}tf(x,t)-F(x,t).\nonumber
\end{eqnarray}
\begin{remark}\label{condprop}
The condition $(f_6)$ was assumed by Jeanjean \cite{jean}. When $\sigma=1$ one can see that the conditions $(f_5)$ and $(f_6)$ are equivalent. In general, there are functions (for example $f(x,t)=2p|t|^{2p-2}t\ln(1+t^{2p})+p\sin t$) that satisfy $(f_6)$ but not $(f_5)$.
\end{remark}
\noindent We assign $Q=\mathbb{R}^{2N}\setminus C(\Omega)\times C(\Omega)$ where $C(\Omega)=\mathbb{R}^N\setminus\Omega${. The} space $X$ will denote the space of Lebesgue measurable functions from $\mathbb{R}^N$ to $\mathbb{R}$ such that its restriction to $\Omega$ of any function $u$ in $X$ belongs to $L^p(\Omega)$ and 
$$\int_{Q}|u(x)-u(y)|^{p}K(x-y)dxdy<\infty.$$
The space $X$ is equipped with the norm 
\begin{eqnarray}
\|u\|_X&=&\|u\|_{L^p(\Omega)}+\left(\int_{Q}|u(x)-u(y)|^{p}K(x-y)dxdy\right)^{\frac{1}{p}}.\nonumber
\end{eqnarray}
We define the subspace $X_0$ of $X$ as 
$$X_0=\{u\in X:u=0~\text{a.e. in}~\mathbb{R}^N\setminus\Omega\}$$ equipped with the norm 
$$\|u\|=\left(\int_{Q}|u(x)-u(y)|^{p}K(x-y)dxdy\right)^{\frac{1}{p}}.$$
The space $X_0$ is a Banach and a reflexive space (refer Lemma 2.4 of \cite{xiang1} and Theorem 1.2 of \cite{adam}).
We will denote the usual fractional Sobolev space by $W_0^{s,p}(\Omega)$ equipped with norm 
$$\|u\|_{W^{s,p}(\Omega)}=\|u\|_{L^p(\Omega)}+\left(\int_{\Omega\times\Omega}|u(x)-u(y)|^{p}K(x-y)dxdy\right)^{\frac{1}{p}}.$$
Note that the norms $\|\cdot\|_{X}$ and $\|\cdot\|_{W^{s,p}}$ are not equivalent when $K(x)=|x|^{-N-ps}$ as $\Omega\times\Omega$ is strictly contained in $Q$. This makes the space $X_0$ different from the usual fractional Sobolev space. Thus the fractional Sobolev space is insufficient for dealing with our problem from the variational method point of view. For a general kernel $K$ satisfying \eqref{kernel} we have $X_0\subset\{u\in W^{s,p}(\mathbb{R}^N):u(x)=0~\text{a.e. in}~\mathbb{R}^N\setminus\Omega\}$. For $K(x)=\frac{1}{|x|^{N+sp}}$, we have $X_0=\{u\in W^{s,p}(\mathbb{R}^N):u(x)=0~\text{a.e. in}~\mathbb{R}^N\setminus\Omega\}=W_0^{s,p}(\mathbb{R}^N)$.\\
We now define the definition of a solution to \eqref{main} in a weaker sense. 
\begin{definition}\label{weaksoln}
A function $u\in X_0$ is a weak solution to \eqref{main} if $h|u|^{-\gamma}\phi\in L^1(\Omega)$ and
\begin{eqnarray}
(a+b\|u\|^p)\int_{Q}|u(x)-u(y)|^{p-2}(u(x)-u(y))(\phi(x)-\phi(y))K(x-y)dxdy\nonumber\\
-\lambda\int_{\Omega}g(x)|u|^{p-2}u\phi dx-\mu\int_{\Omega}h(x)|u|^{-\gamma-1}u\phi dx-\int_{\Omega}F(x,u)dx=0\nonumber
\end{eqnarray}
for all $\phi\in X_0$.
\begin{remark}\label{duality_pair}
We will sometimes denote $$\langle u,v \rangle=\int_{Q}|u(x)-u(y)|^{p-2}(u(x)-u(y))(v(x)-v(y))K(x-y)dxdy$$
\end{remark}
\begin{remark}
Henceforth, we will mean a weak solution whenever we use the term solution.	
\end{remark}
\begin{remark}
{Throughout the article any constant will be denoted by alphabets $C$ with a prefix/suffix.}
\end{remark}
\end{definition}
With these developements, we are now in a position to state our main result(s).
\begin{theorem}\label{mainthm1}
Let $K:\mathbb{R}^N\setminus\{0\}\rightarrow (0,\infty)$ be a function as above in \eqref{kernel}; $(i)-(iii)$, and let conditions $(f_1)-(f_5)$ hold. Then, for any $\lambda>0$ and for small enough $\mu_0>0$ such that for any $\mu\in(0,\mu_0)$, the problem in \eqref{main} has infinitely many solutions in $X_0$ with unbounded energy.	
\end{theorem}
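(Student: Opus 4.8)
The plan is to argue variationally. The energy functional naturally attached to \eqref{main} is
\[
I_{\mu}(u)=\frac{a}{p}\|u\|^{p}+\frac{b}{2p}\|u\|^{2p}-\frac{\lambda}{p}\int_{\Omega}g(x)|u|^{p}\,dx-\frac{\mu}{1-\gamma}\int_{\Omega}h(x)|u|^{1-\gamma}\,dx-\int_{\Omega}F(x,u)\,dx,\qquad u\in X_{0}.
\]
Since $0<1-\gamma<1<p<q<p_{s}^{*}$ (note that $(f_1)$ and $(f_3)$ together force $2p\le q$, so $X_{0}\hookrightarrow L^{2p}(\Omega)$) and $g,h\in L^{\infty}(\Omega)$, the embeddings $X_{0}\hookrightarrow L^{r}(\Omega)$, compact for $r<p_{s}^{*}$, make every term finite and the last three terms weakly sequentially continuous, while $(f_2)$ makes $I_{\mu}$ even with $I_{\mu}(0)=0$. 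The only term preventing $I_{\mu}\in C^{1}(X_{0})$ is the singular one, so the first move is to desingularise it, replacing $|u|^{1-\gamma}$ by $(u^{2}+\varepsilon^{2})^{(1-\gamma)/2}$ (equivalently, truncating the kernel near the origin), which produces an even $I_{\mu,\varepsilon}\in C^{1}(X_{0},\mathbb{R})$; all estimates below are to be carried out uniformly in $\varepsilon$, and the genuine solutions of \eqref{main} together with the integrability $h|u|^{-\gamma}\phi\in L^{1}(\Omega)$ of Definition \ref{weaksoln} are recovered by letting $\varepsilon\to0^{+}$ (dominated/monotone convergence and Fatou). To $I_{\mu,\varepsilon}$ I would apply the Fountain (symmetric mountain-pass type) theorem relative to a splitting $X_{0}=\overline{\bigoplus_{j\ge1}X_{j}}$, $Y_{k}=\bigoplus_{j\le k}X_{j}$, $Z_{k}=\overline{\bigoplus_{j\ge k}X_{j}}$, which yields critical points with an unbounded sequence of critical values.

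For the geometry, two facts are needed. On the finite-dimensional $Y_{k}$ all norms are equivalent and, by $(f_3)$, $F(x,t)\ge M|t|^{2p}-C_{M}$ for every $M>0$, so $I_{\mu,\varepsilon}(u)\le\frac{a}{p}\|u\|^{p}+\frac{b}{2p}\|u\|^{2p}-M\|u\|_{L^{2p}}^{2p}+C_{M}|\Omega|\to-\infty$ as $\|u\|\to\infty$ on $Y_{k}$ (the $\lambda$- and singular terms are nonpositive), giving $\max_{Y_{k},\,\|u\|=r_{k}}I_{\mu,\varepsilon}\le0$ for large $r_{k}$. On $Z_{k}$ one uses $(f_1)$, $(f_4)$ and the decay $\beta_{k}:=\sup\{\|u\|_{L^{q}}:u\in Z_{k},\ \|u\|=1\}\to0$ (and the analogous quantities for the $L^{p}$- and $L^{1-\gamma}$-norms): for $u\in Z_{k}$ with $\|u\|=\rho_{k}$,
\[
I_{\mu,\varepsilon}(u)\ \ge\ \Bigl(\tfrac{a}{p}-\tfrac{\lambda\|g\|_{\infty}}{p}\beta_{k}^{p}\Bigr)\rho_{k}^{p}-C\beta_{k}^{q}\rho_{k}^{q}-C\mu\rho_{k}^{1-\gamma}-C\rho_{k},
\]
so choosing $\rho_{k}\sim(C\beta_{k}^{q})^{-1/(q-p)}\to\infty$ with $\rho_{k}<r_{k}$ makes $b_{k}:=\inf_{Z_{k},\,\|u\|=\rho_{k}}I_{\mu,\varepsilon}\to\infty$; the arbitrariness of $\lambda>0$ is afforded by $\beta_{k}\to0$, and $\mu_{0}$ (and $\varepsilon$) small keeps the singular contribution below $\tfrac{a}{2p}\rho_{k}^{p}$.

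The main point, and the expected main obstacle, is the Cerami condition $(C)_{c}$ for every $c$ — the place where the absence of the AR condition is felt. Let $(u_{n})\subset X_{0}$ satisfy $I_{\mu,\varepsilon}(u_{n})\to c$ and $(1+\|u_{n}\|)\,\|I_{\mu,\varepsilon}'(u_{n})\|_{X_{0}^{*}}\to0$; I would first show $(u_{n})$ bounded. Suppose $\|u_{n}\|\to\infty$, set $v_{n}=u_{n}/\|u_{n}\|$, so up to a subsequence $v_{n}\rightharpoonup v$ in $X_{0}$, $v_{n}\to v$ in $L^{r}(\Omega)$ and a.e. If $v\not\equiv0$, then $|u_{n}|\to\infty$ on $\{v\ne0\}$ and $(f_3)$ with Fatou force $\|u_{n}\|^{-2p}\int_{\Omega}F(x,u_{n})\to\infty$, whereas every other term of $\|u_{n}\|^{-2p}I_{\mu,\varepsilon}(u_{n})$ is $o(1)$ or tends to $\tfrac{b}{2p}$ (the $p$-homogeneous $\lambda$-term and the $(1-\gamma)$-homogeneous singular term are of lower order than $\|u_{n}\|^{2p}$), while $\|u_{n}\|^{-2p}I_{\mu,\varepsilon}(u_{n})\to0$, a contradiction. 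If $v\equiv0$, pick $t_{n}\in[0,1]$ with $I_{\mu,\varepsilon}(t_{n}u_{n})=\max_{t\in[0,1]}I_{\mu,\varepsilon}(tu_{n})$; testing with $t=R/\|u_{n}\|$ and using $v_{n}\to0$ in $L^{q},L^{p},L^{1-\gamma}$ gives $I_{\mu,\varepsilon}(t_{n}u_{n})\ge\tfrac{a}{p}R^{p}+\tfrac{b}{2p}R^{2p}+o(1)$ for all $R$, hence $I_{\mu,\varepsilon}(t_{n}u_{n})\to\infty$; since $\langle I_{\mu,\varepsilon}'(t_{n}u_{n}),t_{n}u_{n}\rangle=o(1)$ ($t_{n}$ interior, or $t_{n}=1$ via Cerami, while $t_{n}=0$ is excluded because $I_{\mu,\varepsilon}(0)=0$), the identity $I_{\mu,\varepsilon}(w)-\tfrac{1}{2p}\langle I_{\mu,\varepsilon}'(w),w\rangle=\tfrac{a}{2p}\|w\|^{p}-\tfrac{\lambda}{2p}\int_{\Omega}g|w|^{p}+\mu S_{\varepsilon}(w)+\int_{\Omega}\mathfrak{G}(x,w)\,dx$ — in which the Kirchhoff term cancels and $S_{\varepsilon}(w)\le0$ — evaluated at $w=t_{n}u_{n}$ and at $w=u_{n}$, combined with the consequence of $(f_5)$, equivalently of $(f_6)$ with $\sigma=1$ (cf.\ Remark \ref{condprop}), that $\mathfrak{G}(x,s)\le\mathfrak{G}(x,t)+T(x)$ whenever $|s|\le|t|$, bounds $I_{\mu,\varepsilon}(t_{n}u_{n})$ by $c+\|T\|_{L^{1}(\Omega)}+o(1)$ (here the smallness of $\mu_{0}$ and the lower order of the $\lambda$-term relative to $\|u_{n}\|^{p}$ in the regime $v\equiv0$ are used to absorb the remainders), a contradiction. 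Thus $(u_{n})$ is bounded; then $u_{n}\rightharpoonup u$, the compact embeddings and the weak continuity of the lower-order and regularised singular terms reduce $\langle I_{\mu,\varepsilon}'(u_{n})-I_{\mu,\varepsilon}'(u),u_{n}-u\rangle\to0$ to $\langle(a+b\|u_{n}\|^{p})\mathfrak{L}_{p}^{s}u_{n}-(a+b\|u\|^{p})\mathfrak{L}_{p}^{s}u,u_{n}-u\rangle\to0$, whence $u_{n}\to u$ in $X_{0}$ by the $(S_{+})$ property of the Kirchhoff-weighted operator, establishing $(C)_{c}$.

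With $I_{\mu,\varepsilon}$ even, $I_{\mu,\varepsilon}(0)=0$, the Fountain geometry and $(C)_{c}$ in place, the symmetric mountain-pass/Fountain theorem produces critical points $u_{k}^{\varepsilon}$ of $I_{\mu,\varepsilon}$ with $I_{\mu,\varepsilon}(u_{k}^{\varepsilon})\in[\,b_{k},\,\max_{Y_{k}\cap\overline{B_{r_{k}}}}I_{\mu,\varepsilon}\,]$, hence with critical values tending to $+\infty$ as $k\to\infty$. Fixing $k$ and letting $\varepsilon\to0^{+}$ along the uniform bounds, $u_{k}^{\varepsilon}\to u_{k}$ in $X_{0}$; passing to the limit in the Euler--Lagrange identity (Fatou restoring $h|u_{k}|^{-\gamma}\phi\in L^{1}(\Omega)$) shows $u_{k}$ is a weak solution of \eqref{main} in the sense of Definition \ref{weaksoln}, and $I_{\mu}(u_{k})\to+\infty$, i.e.\ \eqref{main} has infinitely many solutions of unbounded energy. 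I expect the boundedness of Cerami sequences in the vanishing case $v\equiv0$ to be the hard part: the $t_{n}$-maximisation argument must be run with the nonlocal Kirchhoff term, the $\lambda g|u|^{p-2}u$ term and the singular term all present simultaneously, with only the weak structural hypothesis $(f_5)$ standing in for AR; a secondary technical burden is keeping every estimate uniform along the desingularisation $\varepsilon\to0^{+}$.
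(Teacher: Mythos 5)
Your variational skeleton coincides with the paper's: both arguments replace the non-differentiable energy by a $C^1$ surrogate, verify the two geometric conditions of a symmetric mountain-pass/Fountain scheme (small spheres in $Z_k$ via $(f_1)$, $(f_4)$ and the decay $\beta_k\to0$ of Theorem \ref{conv17}; large spheres in finite-dimensional pieces via $(f_3)$ and norm equivalence), and establish the Cerami condition exactly as you do -- Jeanjean's maximisation $t_n u_n=\arg\max_{t\in[0,1]}$, the identity $I(w)-\tfrac{1}{2p}\langle I'(w),w\rangle$ in which the $\tfrac{b}{2p}\|w\|^{2p}$ term cancels, the quasi-monotonicity of $\mathfrak{G}$ coming from $(f_5)$ (equivalently $(f_6)$ with $\sigma=1$), and the dichotomy $v\equiv0$ versus $v\not\equiv0$ with Fatou and $(f_3)$ in the latter case. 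Your observation that $(f_1)$ and $(f_3)$ force $q>2p$, hence $2p<p_s^*$, is correct and is used implicitly in the paper. Strong convergence of bounded Cerami sequences via the $(S_+)$/Simon-inequality mechanism is also the paper's Lemma \ref{Ce_lemma}.

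The genuine divergence, and the place where your proposal has a gap, is the treatment of the singularity. You desingularise by $|u|^{1-\gamma}\rightsquigarrow (u^2+\varepsilon^2)^{(1-\gamma)/2}$ and defer everything to a final limit $\varepsilon\to0^+$; the paper instead truncates the nonlinearity below the level of the \emph{solution} $\underline{u}_\mu$ of the purely singular auxiliary problem \eqref{auxprob} (Lemma \ref{existence_positive_soln}), so that any critical point of the truncated functional $\bar I$ lying above $\underline{u}_\mu$ is \emph{already} a weak solution of \eqref{main} (Remark \ref{key_obs}, Lemma \ref{u_greater_u_lambda}) and no limiting procedure is needed. Your route requires, for each fixed $k$, that $u_k^\varepsilon\to u_k$ in $X_0$ \emph{and} that one can pass to the limit in the term $\int_\Omega h\,(|u_k^\varepsilon|^2+\varepsilon^2)^{-(\gamma+1)/2}u_k^\varepsilon\phi\,dx$ for every test function $\phi\in X_0$, recovering $h|u_k|^{-\gamma}\phi\in L^1(\Omega)$ as demanded by Definition \ref{weaksoln}. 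Fatou gives only a one-sided inequality there; the standard way to close this is a lower bound $|u_k^\varepsilon|\geq c\,\mathrm{d}(x,\partial\Omega)^s$ (or $\geq\underline{u}_\mu$) \emph{uniform in $\varepsilon$}, which is available for positive solutions obtained by minimisation or sub/supersolution methods but is not established -- and is in general false -- for the sign-changing critical points produced by a Fountain-type minimax, whose nodal sets can carry non-integrable contributions of $h|u|^{-\gamma}$. You flag the uniformity in $\varepsilon$ as ``a secondary technical burden''; for a singular nonlinearity it is in fact the central obstruction, and without a uniform separation of $u_k^\varepsilon$ from zero your final paragraph does not go through as written. Adopting the paper's cutoff at $\underline{u}_\mu$ (or supplying a uniform lower barrier for the regularised critical points) removes this gap; the rest of your argument then matches the paper's proof.
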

\begin{theorem}\label{mainthm2}
	Let $K:\mathbb{R}^N\setminus\{0\}\rightarrow (0,\infty)$ be a function as above in \eqref{kernel}; $(i)-(iii)$ and let conditions $(f_1)-(f_4)$ hold. If condition $(f_6)$ is considered instead of $(f_5)$ then also the conclusion of the Theorem \ref{mainthm1} holds.	
\end{theorem}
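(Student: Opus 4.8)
The proof parallels that of Theorem \ref{mainthm1}, so I only indicate the changes. The functional attached to \eqref{main} is
\[
J(u)=\frac{a}{p}\|u\|^{p}+\frac{b}{2p}\|u\|^{2p}-\frac{\lambda}{p}\int_{\Omega}g(x)|u|^{p}\,dx-\frac{\mu}{1-\gamma}\int_{\Omega}h(x)|u|^{1-\gamma}\,dx-\int_{\Omega}F(x,u)\,dx,
\]
which is well defined on $X_{0}$ (since $1-\gamma<1<p<p_{s}^{*}$) and even, by $(f_{2})$ together with the symmetry of $K$ and $g,h\geq 0$. In the proof of Theorem \ref{mainthm1}, condition $(f_{5})$ enters only in showing that Cerami sequences of $J$ are bounded; the geometry required by the symmetric mountain-pass (fountain) scheme uses only $(f_{1})$--$(f_{4})$, the compact embedding $X_{0}\hookrightarrow L^{q}(\Omega)$, and the smallness of $\mu_{0}$, while the non-differentiability of $J$ caused by the singular term is dealt with as there. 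Hence it suffices to re-establish, under $(f_{6})$, the Cerami condition $(C)_{c}$ for every $c\in\mathbb{R}$, the delicate point being the boundedness of Cerami sequences.

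If $\sigma=1$ in $(f_{6})$, then by Remark \ref{condprop} it coincides with $(f_{5})$ and nothing new is needed; so assume $\sigma>1$. Let $(u_{n})\subset X_{0}$ satisfy $J(u_{n})\to c$ and $(1+\|u_{n}\|)\,\|J'(u_{n})\|_{X_{0}^{*}}\to 0$, hence $\langle J'(u_{n}),u_{n}\rangle\to 0$, and suppose for contradiction that $\|u_{n}\|\to\infty$. Set $v_{n}=u_{n}/\|u_{n}\|$; along a subsequence $v_{n}\rightharpoonup v$ in $X_{0}$, $v_{n}\to v$ in $L^{r}(\Omega)$ for $r\in[1,p_{s}^{*})$ and $v_{n}\to v$ a.e. If $v\not\equiv 0$, then $|u_{n}(x)|\to\infty$ on the positive-measure set $\{v\neq 0\}$, so $(f_{3})$ and Fatou's lemma give $\|u_{n}\|^{-2p}\int_{\Omega}F(x,u_{n})\,dx\to+\infty$; since the $g$- and singular terms of $J$ are nonpositive, dividing $J(u_{n})$ by $\|u_{n}\|^{2p}$ forces $J(u_{n})\to-\infty$, a contradiction. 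If $v\equiv 0$, apply Jeanjean's device: pick $t_{n}\in[0,1]$ with $J(t_{n}u_{n})=\max_{t\in[0,1]}J(tu_{n})$. For each fixed $R>0$ and $n$ with $R\leq\|u_{n}\|$, all lower-order terms of $J(Rv_{n})$ (the $g$-term, the singular term, and $\int_{\Omega}F(\cdot,Rv_{n})\,dx$, controlled via $(f_{1})$) tend to $0$ since $v_{n}\to 0$ in $L^{r}(\Omega)$, so $J(t_{n}u_{n})\geq J(Rv_{n})\to\frac{a}{p}R^{p}+\frac{b}{2p}R^{2p}$; letting $R\to\infty$ gives $J(t_{n}u_{n})\to+\infty$. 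As $J(0)=0$ and $J(u_{n})\to c$, we get $t_{n}\in(0,1)$ for large $n$, whence $\langle J'(t_{n}u_{n}),t_{n}u_{n}\rangle=0$. The elementary identity
\[
2pJ(w)-\langle J'(w),w\rangle=a\|w\|^{p}-\lambda\int_{\Omega}g|w|^{p}\,dx+\mu\Bigl(1-\tfrac{2p}{1-\gamma}\Bigr)\int_{\Omega}h|w|^{1-\gamma}\,dx+2p\int_{\Omega}\mathfrak{G}(x,w)\,dx
\]
(valid for all $w\in X_{0}$), applied to $w=t_{n}u_{n}$, after dropping the nonpositive $g$- and singular terms and invoking $(f_{6})$ with $|t_{n}u_{n}|\leq|u_{n}|$, yields $2pJ(t_{n}u_{n})\leq a\|u_{n}\|^{p}+2p\sigma\int_{\Omega}\mathfrak{G}(x,u_{n})\,dx+2p\|T\|_{L^{1}(\Omega)}$. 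Rewriting $2p\int_{\Omega}\mathfrak{G}(x,u_{n})\,dx$ from the same identity with $w=u_{n}$, and using that in the case $v\equiv 0$ one has $\int_{\Omega}g|u_{n}|^{p}\,dx\leq\|g\|_{\infty}\|u_{n}\|^{p}\|v_{n}\|_{L^{p}(\Omega)}^{p}=o(\|u_{n}\|^{p})$ while the $h$-term is $O(\|u_{n}\|^{1-\gamma})$, the right-hand side becomes $[(1-\sigma)a+o(1)]\|u_{n}\|^{p}+O(\|u_{n}\|^{1-\gamma})+O(1)\to-\infty$ (as $\sigma>1$), contradicting $J(t_{n}u_{n})\to+\infty$. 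Therefore $(u_{n})$ is bounded.

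With boundedness in hand, pass to $u_{n}\rightharpoonup u$ in $X_{0}$ and upgrade to $u_{n}\to u$ in $X_{0}$ via the compact embedding $X_{0}\hookrightarrow L^{r}(\Omega)$ ($r<p_{s}^{*}$) and the $(S_{+})$-type property of $u\mapsto\mathfrak{M}(\|u\|)\mathfrak{L}_{p}^{s}u$, the singular and subcritical terms passing to the limit exactly as in the proof of Theorem \ref{mainthm1}; thus $(C)_{c}$ holds for every $c$. Since $J$ is even and possesses the geometry already verified, the symmetric mountain-pass (fountain) theorem applied in Theorem \ref{mainthm1} produces an unbounded sequence of critical values of $J$, i.e. infinitely many solutions of \eqref{main} in $X_{0}$ with unbounded energy. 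The main obstacle is precisely this Cerami-sequence estimate: one must simultaneously accommodate the non-smoothness of $J$ (as in Theorem \ref{mainthm1}) and control the two terms absent from Jeanjean's original setting --- the Kirchhoff term $\tfrac{b}{2p}\|u\|^{2p}$ and the indefinite term $-\tfrac{\lambda}{p}\int_{\Omega}g|u|^{p}$ --- which succeeds because $2pJ-\langle J',\cdot\rangle$ annihilates the top Kirchhoff power, leaves behind the sign-favourable $a\|u\|^{p}$, and reduces the $g$-term to lower order precisely in the only case ($v\equiv 0$) where that reduction is needed.
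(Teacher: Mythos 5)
Your proof is correct and takes essentially the same route as the paper: both reduce the theorem to the boundedness of Cerami sequences and apply Jeanjean's device with the maximizer of $t\mapsto \bar I(tu_n)$ on $[0,1]$, using $(f_6)$ to pass from $\mathfrak{G}(x,t_nu_n)$ to $\mathfrak{G}(x,u_n)$ and the Cerami bounds at $u_n$ to contradict $\bar I(t_nu_n)\to\infty$. The only (cosmetic) divergence is in the final bookkeeping: the paper keeps the factor $\tfrac{1}{\sigma}$ on the left and bounds $\tfrac{1}{\sigma}\bar I(\alpha_nu_n)$ by a constant uniformly for all $\sigma\ge 1$, whereas you multiply through by $\sigma$ and extract the term $(1-\sigma)a\|u_n\|^p\to-\infty$, which forces you to dispose of the case $\sigma=1$ separately via Remark \ref{condprop} and Theorem \ref{mainthm1}.
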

\noindent The next theorem stated is a {\it $C^1$ versus $W_0^{s,p}$} analysis when the kernel $K(x)=|x|^{-N-sp}$ is considered. The functional will still be denoted by $I$.
\begin{theorem}\label{regularity holder}
	Let $\Omega\subset\mathbb{R}^N$ be a bounded domain with a $C^{1,1}$-boundary and let $u_0\in C^1(\bar{\Omega})$ which satisfies 
	\begin{equation}\label{bord}
	u_0\geq C\mbox{d}(x,\partial\Omega)^s\mbox{ for some }C>0
	\end{equation}
	be a local minimizer of $I$ (defined later in this section) in $C^1(\bar{\Omega})$ topology; that is, there exists $\epsilon>0$ such that, $u\in C^1(\bar{\Omega}),\;\|u-u_0\|_{C^1(\bar{\Omega})}<\epsilon$ implies $I(u_0)\leq I(u)$. Then, $u_0$ is a local minimum of $I$ in $W^{s,p}_0(\Omega)$ as well.
\end{theorem}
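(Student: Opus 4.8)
The result is a "$C^1$ versus $W_0^{s,p}$" regularity statement of Brezis--Nirenberg type, adapted to the fractional $p$-Laplacian with a singular term. The strategy is the classical argument by contradiction combined with a compactness/approximation scheme. Suppose $u_0$ is not a local minimizer of $I$ in the $W_0^{s,p}(\Omega)$ topology. Then for every $\varepsilon>0$ the infimum of $I$ over the closed ball $\overline{B}_\varepsilon(u_0)=\{u\in W_0^{s,p}(\Omega):\|u-u_0\|\le\varepsilon\}$ is strictly below $I(u_0)$, and since $I$ is weakly lower semicontinuous and coercive on this ball (the growth assumption $(f_1)$ with $q<p_s^*$ together with the fractional Sobolev embedding controls the superlinear term, and the singular term $\mu h|u|^{1-\gamma}$ is integrable and lower order), the infimum is attained at some $u_\varepsilon\in\overline{B}_\varepsilon(u_0)$ with $I(u_\varepsilon)<I(u_0)$. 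The first key step is to show $\|u_\varepsilon-u_0\|\to 0$ as $\varepsilon\to 0$: indeed $u_\varepsilon$ is bounded in $W_0^{s,p}$, so up to a subsequence $u_\varepsilon\rightharpoonup v$ weakly, and one checks (using weak lower semicontinuity and the minimality) that $I(v)\le I(u_0)$ with $v$ in the ball; but if the balls shrink this forces $v=u_0$ and then norm convergence (the functional being of the form "uniformly convex part plus weakly continuous lower-order terms", so weak convergence plus convergence of the leading norm gives strong convergence).

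The second step is a Lagrange-multiplier / variational-inequality analysis of $u_\varepsilon$. Either $u_\varepsilon$ lies in the interior of the ball, in which case $I'(u_\varepsilon)=0$ and $u_\varepsilon$ is a genuine weak solution of \eqref{main}; or $\|u_\varepsilon-u_0\|=\varepsilon$ and there is a multiplier $\nu_\varepsilon\le 0$ such that $u_\varepsilon$ solves, in the weak sense, the perturbed equation obtained from \eqref{main} by adding $\nu_\varepsilon$ times the (sub)differential of $\|u-u_0\|^p$. In either case $u_\varepsilon$ solves a fractional $p$-Kirchhoff equation with a right-hand side that is bounded in $L^\infty$ or in the relevant dual space uniformly in $\varepsilon$ (here one uses that $u_0$ is $C^1$, that $u_0\ge C\,\mathrm{d}(x,\partial\Omega)^s$ so the singular term $h|u_\varepsilon|^{-\gamma}$ stays controlled near $u_0$, and the subcritical growth $(f_1)$).

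The third step is the regularity bootstrap: apply the known local and global $C^\alpha$-up-to-the-boundary estimates for the fractional $p$-Laplacian (of Iannizzotto--Mosconi--Squassina type, and the fine boundary estimate giving $u_\varepsilon/\mathrm{d}^s\in C^\alpha(\overline\Omega)$), together with the weighted barrier \eqref{bord}, to conclude that the family $\{u_\varepsilon\}$ is bounded in $C^1(\overline\Omega)$ — or more precisely in a Hölder space of the weighted quotient — uniformly in $\varepsilon$. Combined with $\|u_\varepsilon-u_0\|\to 0$ in $W_0^{s,p}$ and interpolation, this yields $\|u_\varepsilon-u_0\|_{C^1(\overline\Omega)}\to 0$. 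But then for $\varepsilon$ small enough $u_\varepsilon$ enters the $C^1$-neighbourhood of $u_0$ on which $u_0$ is a minimizer, so $I(u_0)\le I(u_\varepsilon)$, contradicting $I(u_\varepsilon)<I(u_0)$. Hence $u_0$ must be a local minimizer in $W_0^{s,p}(\Omega)$.

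\textbf{Main obstacle.} The delicate point is the uniform $C^1$ (or weighted-Hölder) bound on $\{u_\varepsilon\}$ in the presence of both the singular term $\mu h|u|^{-\gamma-1}u$ and the nonlocal Kirchhoff coefficient $(a+b\|u_\varepsilon\|^p)$. One needs the lower barrier \eqref{bord} to guarantee that $u_\varepsilon$ does not degenerate near $\partial\Omega$ so that $h|u_\varepsilon|^{-\gamma}$ remains bounded in a suitable weighted space; and one needs the Kirchhoff coefficient to be bounded and bounded away from zero along the family, which follows since $a>0$ and $\|u_\varepsilon\|$ stays bounded. Making the boundary regularity quantitative and uniform — i.e.\ invoking the fine fractional boundary regularity theory so that the estimate on $u_\varepsilon/\mathrm{d}^s$ is uniform in $\varepsilon$ — is where the real work lies; everything else is the by-now-standard Brezis--Nirenberg scheme transplanted to the fractional setting.
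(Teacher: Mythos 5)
Your overall scheme (contradiction, constrained minimization near $u_0$, Lagrange multiplier, uniform boundary regularity, contradiction with the $C^1$ minimality) is the same Brezis--Nirenberg-type strategy the paper follows, but there is a genuine gap in your choice of constraint set. You minimize $I$ over the $W_0^{s,p}$-norm ball $\{u:\|u-u_0\|\le\varepsilon\}$, so when the constraint is active your multiplier term is $\nu_\varepsilon$ times the (sub)differential of $\|u-u_0\|^p$, i.e.\ a second nonlocal quasilinear operator $\nu_\varepsilon\,\mathfrak{L}_p^s(u_\varepsilon-u_0)$. For $p=2$ this can be merged with the principal part by linearity, but for $p\neq 2$ the equation $(a+b\|u_\varepsilon\|^p)\mathfrak{L}_p^su_\varepsilon+\nu_\varepsilon\mathfrak{L}_p^s(u_\varepsilon-u_0)=\dots$ is not of the form to which the Iannizzotto--Mosconi--Squassina interior and fine boundary regularity theory applies, so your third step (uniform $C^{1,\delta}$ or weighted-H\"older bounds) cannot be carried out. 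This is precisely why the paper instead penalizes with $J(w)=\frac{1}{r+1}\int_{\Omega'}|w-u_0|^{r+1}dx$ for $r\in(q,p_s^*-1)$: the Euler--Lagrange perturbation is then the zeroth-order, subcritical term $\Lambda_\epsilon|v_\epsilon-u_0|^{r-1}(v_\epsilon-u_0)$, which sits harmlessly on the right-hand side and survives the Moser iteration and the $C^{1,\delta}$ bootstrap.

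Two further points you underestimate. First, you assume the perturbed right-hand side is ``bounded \dots uniformly in $\varepsilon$,'' but even with the correct penalization the multiplier $\Lambda_\epsilon$ may be unbounded as $\epsilon\to0$; the paper must treat this as a separate case (its Case b), deriving $-\Lambda_\epsilon\|v_\epsilon-u_0\|_\infty\le C$ by testing with $|v_\epsilon-u_0|^{\beta-1}(v_\epsilon-u_0)$ and letting $\beta\to\infty$. Second, the barrier \eqref{bord} is a hypothesis on $u_0$ only; to keep the singular term $h\,v_\epsilon^{-\gamma}$ under control you need the analogous lower bound $v_\epsilon\ge\eta\phi_1$ for the constrained minimizers themselves, which the paper establishes by a separate convexity/monotonicity argument (comparison with the subsolution $\eta\phi_1$). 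Your proposal gestures at non-degeneracy near $\partial\Omega$ but supplies no mechanism for transferring it from $u_0$ to $u_\varepsilon$.
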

\noindent We quickly recall the following eigenvalue problem \cite{ren1}. 
\begin{eqnarray}\label{eigenprob}
\mathfrak{L}_p^su&=&\lambda|u|^{p-2}u~\text{in}~\Omega\nonumber\\
u&=&0~\text{in}~\mathbb{R}^N\setminus\Omega.
\end{eqnarray}
This has a divergent sequence of positive eigenvalues
$$0< \lambda_1\leq \lambda_2\leq\cdots\leq\lambda_n\leq\cdots$$
which has eigenvectors say $(e_n)_{n\in\mathbb{N}}$. Refer to Proposition $9$ of \cite{serva3} where it has been shown that this sequence $(e_n)_{n\in\mathbb{N}}$ can be so chosen that it provides an orthonormal basis in $L^p(\Omega)$ and an orthogonal basis in $X_0$.\\
We first define $I:X_0\rightarrow \mathbb{R}$ as
\begin{eqnarray}\label{energy_functional}
I(u)&=&A(u)-B(u)-C(u)
\end{eqnarray}
where 
\begin{align}
\begin{split}
A(u)&=\frac{a}{p}\|u\|^p+\frac{b}{2p}\|u\|^{2p}\\
B(u)&=\frac{\lambda}{p}\int_{\Omega}g(x)|u|^pdx\\
C(u)&=\frac{\mu}{1-\gamma}\int_{\Omega}h(x)|u|^{1-\gamma}dx+\int_{\Omega}F(x,u)dx\\
\text{and}~F(x,u)&=\int_{0}^uf(x,s)ds.
\end{split}
\end{align}
From here {onwards} we will denote $\|\cdot\|_{L^p(\Omega)}$ as $\|\cdot\|_p$. To our dissatisfaction, the functional $I$ is not a $C^1$ functional. However we redefine this functional $I$ as follows. We define
\[   
\bar{f}(x,t) = 
\begin{cases}
\mu h(x)|t|^{-\gamma-1}t+\tilde{f}(x,t), &~\text{if}~|t|>\underline{u}_{\mu}\\
\mu h(x)\underline{u}_{\mu}^{-\gamma}+\tilde{f}(x,\underline{u}_{\mu}),&~\text{if}~|t|\leq \underline{u}_{\mu}
\end{cases}\]
where {$\tilde{f}(x,t)=\lambda g(x)|t|^{p}+f(x,t)$} and $\underline{u}_{\mu}$ is a solution to
\begin{eqnarray}\label{auxprob}
\left(a+b\int_{\mathbb{R}^{2N}}|u(x)-u(y)|^{p}K(x-y)dxdy\right)\mathfrak{L}_p^su&=&\mu h(x)u^{-\gamma},~\text{in}~\Omega\nonumber\\
u&>&0,~\text{in}~\Omega\nonumber\\
u&=&0,~\text{in}~\mathbb{R}^N\setminus\Omega\nonumber\\
\end{eqnarray}
whose existence can be guaranteed from Lemma \ref{existence_positive_soln}.
Let $\bar{F}(x,s)=\int_{0}^{s}\bar{f}(x,t)ds$. We now define the functional as follows.
 \begin{eqnarray}\label{modi_func}
 \bar{I}(u)&=&\frac{a}{p}\|u\|^p+\frac{b}{2p}\|u\|^{2p}-\int_{\Omega}\bar{F}(x,u)dx.
 \end{eqnarray}
 The way the functional has been defined, it is easy to see that the critical points of the functional in \eqref{modi_func} are also the critical points of the functional \eqref{energy_functional}. Most importantly, the functional $\bar{I}$ which is defined in \eqref{modi_func} is $C^1$ which allows us to use the variational methods. {Further}
 \begin{eqnarray}\label{der_modi_func}
\langle \bar{I}'(u),\phi \rangle&=&(a+b\|u\|^p)\int_{Q}|u(x)-u(y)|^{p-2}(u(x)-u(y))(\phi(x)-\phi(y))K(x-y)dxdy\nonumber\\
 & &-\int_{\Omega}\bar{f}(x,u)\phi dx
 \end{eqnarray}
 for all $\phi\in X_0$.\\
 \begin{remark}\label{key_obs}It can easily seen that if $u$ is a weak solution to \eqref{auxprob} with $u>\underline{u}_{\lambda}$, then $u$ is also a weak solution to \eqref{main}. \end{remark}
 The following lemma will be used in  this work.
 \begin{lemma}\label{embres}
 (Refer Lemma 2.4 \cite{xiang1}) Let the kernel $K$ be as above in \eqref{kernel}. We then have the following.
 \begin{enumerate}
 \item For any $r\in[1,p_s^*)$, the embedding $X_0\hookrightarrow L^r(\Omega)$ is compact when $\Omega$ is bounded with smooth enough boundary.
 \item For all $r\in[1,p_s^*]$, the embedding $X_0\hookrightarrow L^r(\Omega)$ is continuous.
 \end{enumerate}
 \end{lemma}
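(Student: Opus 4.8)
\textbf{Proof proposal for Lemma \ref{embres}.}

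The plan is to reduce both statements to the corresponding facts for the usual fractional Sobolev space $W_0^{s,p}(\Omega)$, via a comparison of norms. The key structural observation is property (ii) of \eqref{kernel}: since $K(x)\geq\delta|x|^{-N-ps}$, for every $u\in X_0$ we have
\begin{eqnarray}
[u]_{s,p}^p:=\int_{\mathbb{R}^N\times\mathbb{R}^N}\frac{|u(x)-u(y)|^p}{|x-y|^{N+ps}}\,dx\,dy\leq\frac{1}{\delta}\int_Q|u(x)-u(y)|^pK(x-y)\,dx\,dy=\frac{1}{\delta}\|u\|^p,\nonumber
\end{eqnarray}
where the integral over $\mathbb{R}^N\times\mathbb{R}^N$ collapses to an integral over $Q$ because $u\equiv 0$ outside $\Omega$ kills the contribution of $C(\Omega)\times C(\Omega)$. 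Hence the inclusion $X_0\hookrightarrow W_0^{s,p}(\mathbb{R}^N)$ is continuous, and since $u$ vanishes a.e. outside the bounded set $\Omega$ we land in $W_0^{s,p}(\Omega)$ with $\|u\|_{W^{s,p}}\leq C\|u\|$ (using also the fractional Poincar\'e inequality on the bounded domain $\Omega$ to control $\|u\|_p$ by $[u]_{s,p}$, so that $\|u\|$ alone is an equivalent norm on $X_0$).

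First I would record the fractional Sobolev embedding theorem for $W_0^{s,p}(\Omega)$ with $\Omega$ bounded and with (at least) a Lipschitz boundary: the embedding $W_0^{s,p}(\Omega)\hookrightarrow L^r(\Omega)$ is continuous for every $r\in[1,p_s^*]$ and compact for every $r\in[1,p_s^*)$, where $p_s^*=Np/(N-sp)$; this is standard (e.g.\ Di Nezza--Palatucci--Valdinoci, and as cited, Theorem 1.2 of \cite{adam}). Composing with the continuous inclusion $X_0\hookrightarrow W_0^{s,p}(\Omega)$ established above immediately gives statement (2): for $r\in[1,p_s^*]$,
\begin{eqnarray}
\|u\|_{L^r(\Omega)}\leq C_1\|u\|_{W^{s,p}(\Omega)}\leq C_2\|u\|\qquad\text{for all }u\in X_0,\nonumber
\end{eqnarray}
which is exactly the continuity claim. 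For statement (1), take a bounded sequence $(u_n)$ in $X_0$; by the norm comparison it is bounded in $W_0^{s,p}(\Omega)$, so for $r\in[1,p_s^*)$ the compact embedding of $W_0^{s,p}(\Omega)$ into $L^r(\Omega)$ yields a subsequence converging strongly in $L^r(\Omega)$; this is precisely compactness of $X_0\hookrightarrow L^r(\Omega)$.

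The only genuine content beyond bookkeeping is making sure the ambient fractional Sobolev embedding is invoked under hypotheses that actually hold here — a bounded domain with Lipschitz boundary, which the paper assumes for $\Omega$ — and verifying that the seminorm $\|\cdot\|$ really dominates the full $W^{s,p}(\Omega)$ norm (i.e.\ that the $L^p(\Omega)$ part is not lost). This last point is the place I would be most careful: it follows from the fractional Poincar\'e inequality $\|u\|_{L^p(\Omega)}\leq C(\Omega,s,p)\,[u]_{s,p}$ valid for $u\in W_0^{s,p}(\Omega)$ on a bounded domain, combined with the inequality $[u]_{s,p}^p\leq\delta^{-1}\|u\|^p$ above. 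Everything else is the routine composition of a continuous inclusion with a known (compact or continuous) embedding. Since the statement is quoted verbatim from Lemma 2.4 of \cite{xiang1}, one may alternatively simply cite that reference; the argument sketched here is the self-contained version.
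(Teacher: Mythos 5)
Your argument is correct: the kernel lower bound (ii) gives $[u]_{s,p}^p\le\delta^{-1}\|u\|^p$ for $u\in X_0$ (the reduction of the double integral to $Q$ being justified since $u=0$ a.e.\ outside $\Omega$), the fractional Sobolev/Poincar\'e inequality recovers the $L^p(\Omega)$ part so that $\|\cdot\|$ controls the full $W^{s,p}$ norm, and composing with the standard continuous/compact embeddings of $W_0^{s,p}(\Omega)$ yields both claims. The paper itself gives no proof --- it simply cites Lemma 2.4 of \cite{xiang1} --- and your self-contained argument is essentially the same comparison-with-$W^{s,p}$ route used in that reference.
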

\noindent Following is the definition of Cerami condition (Definition 2.1 \cite{soni1}).
\begin{definition}\label{cerami}
[Cerami condition]~Let $B$ be an infinite dimensional Banach space with respet to $\|\cdot\|_B$ and $I$ be a $C^1(B,\mathbb{R})$ functional.  $I$ is said to satisfy the $(Ce)_c$ at level $c\in\mathbb{R}$, if any sequence $(u_n)\subset B$ for which $I(u_n)\rightarrow c$ in $B$, $(1+\|u_n\|_{B})I'(u_n)\rightarrow 0$ in $B'$, the dual space of $B$, as $n\rightarrow\infty$, then there exists a strongly convergent subsequence of $(u_{n_k})$ of $(u_n)$ in $B$. 
\end{definition}
\begin{remark}
Henceforth, a subsequence of any sequence, say $(v_n)$, will also be denoted by $(v_n)$.
\end{remark}
\noindent We now give the symmetric mountain pass theorem \cite{col}.
\begin{theorem}\label{symmMPT}
(Symmetric mountain pass theorem) Let $B$ be as in Definition \ref{cerami} and $Y$ be a finite dimensional Banach space such $B=Y \bigoplus Z$. For any $c>0$ if $I\in C^1(B,\mathbb{R})$ satisfies $(Ce)_c$ and 
\begin{enumerate}
\item $I$ is even and $I(0)=0$ for all $u\in B$ 
\item There exists $r>0$ such that $I(u)\geq R$ for all $u\in B_r(0)=\{u\in B:\|u\|_B\leq r\}$
\item For any finite dimensional subspace $\bar{B}\subset B$, there exists $r_0=r(\bar{B})>0$ such that $I(u)\leq 0$ on $\bar{B}\setminus B_{r_0}(0_{\bar{B}})$, where $0_{\bar{B}}$ is the null vector in $\bar{B}$
\end{enumerate}
then there exists an unbounded sequence of critical values of $I$ characterized by a minimax argument.
\end{theorem}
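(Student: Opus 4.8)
\noindent The statement is the classical $\mathbb Z_2$-symmetric (fountain-type) mountain pass theorem of Ambrosetti--Rabinowitz, the only new feature being that Palais--Smale compactness is relaxed to the Cerami condition $(Ce)_c$. The plan is to run the standard minimax proof built on Krasnoselskii's genus together with an odd (equivariant) deformation argument, checking that every appeal to compactness survives the passage from $(PS)_c$ to $(Ce)_c$.

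First I would recall the genus $\gamma$ on the class $\mathcal A$ of closed symmetric subsets of $B\setminus\{0\}$, and record the properties used below: monotonicity and subadditivity, invariance under odd homeomorphisms, finiteness on compact sets with the neighbourhood stability $\gamma(A)=\gamma(\overline{N_\delta(A)})$ for $\delta$ small, the normalisation $\gamma(\partial B_\rho\cap V)=\dim V$ for a finite-dimensional subspace $V$, and the intersection lemma: if $B=Y\oplus Z$ with $P\colon B\to Y$ the associated odd continuous projection and $\gamma(A)>\dim Y$, then $A\cap Z\neq\emptyset$. Using the orthogonal basis $(e_n)$ of $X_0$ recalled above I set $Y_m=\operatorname{span}\{e_1,\dots,e_m\}$ and $Z_m=\overline{\operatorname{span}}\{e_n:n>m\}$, so that $B=Y_m\oplus Z_m$ realises the decomposition of the hypothesis with $Y=Y_m$; for each $m$ I fix, by condition~(3), a radius $\rho_m>0$ with $I\le 0$ on $\{u\in Y_m:\|u\|_B\ge\rho_m\}$, put $B_m=\{u\in Y_m:\|u\|_B\le\rho_m\}$, and define
\[
\Gamma_m=\{g\in C(B_m,B):\ g\text{ odd},\ g|_{\partial B_m}=\operatorname{id}\},\qquad c_m=\inf_{g\in\Gamma_m}\ \max_{u\in B_m}I(g(u)).
\]
Since $\operatorname{id}\in\Gamma_m$ the levels $c_m$ are finite.

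Next I would show $c_m\to+\infty$. For $0<r<\rho_m$ the intersection lemma forces $g(B_m)\cap(\partial B_r\cap Z_m)\neq\emptyset$ for every $g\in\Gamma_m$ (this is the genus linking between the ball $B_m\subset Y_m$ and the sphere $\partial B_r\cap Z_m$), hence $\max_{B_m}I\circ g\ge\inf_{\partial B_r\cap Z_m}I$; taking $r=r_m$ from condition~(2) and using its sharp form $\beta_m:=\inf_{\partial B_{r_m}\cap Z_m}I\to+\infty$ --- which in the concrete setting follows from $(f_1)$--$(f_4)$ and the vanishing $\sup\{\|u\|_{L^q(\Omega)}:u\in Z_m,\ \|u\|=1\}\to0$ --- one gets $c_m\ge\beta_m\to+\infty$. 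Then comes the heart of the argument, namely that each $c_m$ is a critical value of $I$. Assume $c:=c_m$ is a regular value; since $I$ satisfies $(Ce)_{c'}$ for every $c'>0$, any Cerami sequence at a level near $c$ is precompact, so $c$ has a punctured neighbourhood free of critical values and a weighted lower bound $(1+\|u\|_B)\|I'(u)\|_{B'}\ge\delta>0$ holds on $I^{-1}([c-2\varepsilon,c+2\varepsilon])$ for small $\varepsilon>0$; the quantitative deformation lemma, applied to a locally Lipschitz \emph{odd} pseudo-gradient field for $I$ divided by the weight $1+\|u\|_B$ (so that the Cerami decay $(1+\|u_n\|_B)\|I'(u_n)\|_{B'}\to0$ produces a uniform descent along bounded orbits), then yields an odd homeomorphism $\eta$ of $B$ with $\eta(\{I\le c+\varepsilon\})\subset\{I\le c-\varepsilon\}$ and $\eta=\operatorname{id}$ outside $\{c-2\varepsilon\le I\le c+2\varepsilon\}$; in particular $\eta=\operatorname{id}$ on $\partial B_m$, since there $I\le0<c-2\varepsilon$. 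Picking $g\in\Gamma_m$ with $\max_{B_m}I\circ g\le c+\varepsilon$, the map $\eta\circ g$ is odd, fixes $\partial B_m$, hence lies in $\Gamma_m$, yet satisfies $\max_{B_m}I\circ(\eta\circ g)\le c-\varepsilon<c_m$, contradicting the definition of $c_m$. Therefore every $c_m$ is a critical value of $I$, and since $c_m\to+\infty$ these form an unbounded sequence of critical values characterised by the above minimax, which is the assertion. (For genuine multiplicity when values repeat, the standard refinement works instead with a genus-$\ge m$ family and shows $\gamma(K_c)\ge p+1$ whenever $c_m=\dots=c_{m+p}=c$.)

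\emph{Main obstacle.} The delicate point is precisely the deformation step under the Cerami rather than the Palais--Smale condition. Since $(Ce)_c$ only constrains sequences along which $\|I'(u_n)\|_{B'}$ may decay as slowly as $1/\|u_n\|_B$, the negative-gradient flow must be reparametrised --- equivalently, the pseudo-gradient field rescaled by $1+\|u\|_B$ --- so that an orbit either decreases $I$ by a definite amount while staying in a ball or is driven below level $c-\varepsilon$; establishing the required quantitative estimates for this modified flow, while simultaneously keeping the field odd so that $\eta$ is equivariant (which is what legitimises all the genus manipulations), is the technical core. A more routine secondary point is justifying, from the terse hypotheses (2)--(3), that the minimax levels genuinely diverge, which in concrete applications reduces to the vanishing of the best Sobolev constants on the tail subspaces $Z_m$.
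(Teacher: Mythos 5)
This theorem is not proved in the paper at all: it is quoted as a known result from \cite{col}, so there is no internal proof to compare against, and your proposal has to be judged against the standard argument it invokes. Structurally you follow that standard route (Krasnoselskii genus, minimax over odd maps on balls of the finite-dimensional spaces $Y_m$, and an odd deformation lemma in which the pseudo-gradient is rescaled by $1+\|u\|_B$ so that the Cerami condition suffices), and the two main blocks — the linking estimate $\max_{B_m} I\circ g\geq\inf_{\partial B_r\cap Z_m} I$ via the intersection lemma, and the equivariant Cerami deformation showing each $c_m$ is critical — are correctly conceived.

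The genuine gap is the unboundedness of the critical values, which is the actual conclusion of the theorem. From the hypotheses as stated there is a \emph{single} radius $r$ and a \emph{single} level $R$, so your estimate only gives $c_m\geq R>0$ for all $m$; the divergence $c_m\to+\infty$ does not follow, and your fix — assuming the ``sharp form'' $\inf_{\partial B_{r_m}\cap Z_m}I\to+\infty$, justified by $(f_1)$--$(f_4)$ and the decay of the tail Sobolev constants — imports a fountain-type hypothesis from the application into the proof of the abstract statement (it does match how the paper later verifies condition (2) only on $Z_{m'}$, but it is not among the stated assumptions). The classical way to extract unboundedness from exactly these hypotheses (Rabinowitz's proof of the symmetric mountain pass theorem, transplanted verbatim to the Cerami setting) does not work with your classes $\Gamma_m=\{g \text{ odd},\ g|_{\partial B_m}=\mathrm{id}\}$: one needs the larger minimax families consisting of sets $h\left(\overline{D_m\setminus Y}\right)$ with $\gamma(Y)\leq m-j$, precisely because they are stable under excising a neighbourhood $N$ of the critical set $K_{\bar c}$ (compact by $(Ce)_{\bar c}$, hence of finite genus) and under odd deformations; assuming $c_j\uparrow\bar c<\infty$ one removes $N$, deforms below $\bar c-\epsilon$, and contradicts the definition of $c_j$. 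With maps fixing $\partial B_m$ there is no analogue of ``$A\setminus N$ still belongs to the class,'' so that argument cannot be run. To close the proof you should either enlarge the minimax classes in this way and carry out the genus-of-$K_{\bar c}$ argument under the Cerami condition, or else state the theorem with the $Z_m$-dependent geometric condition (as in \cite{col}) and then your $\beta_m\to\infty$ step becomes legitimate; as written, the divergence step is asserted rather than proved.
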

\section{Auxiliary and main result}
We begin this section by proving a few auxiliary lemmas.
\begin{lemma}\label{Ce_lemma}
Let $(f_1)$ hold. Then any bounded sequence $(u_n)$ in $X_0$ which satisfies $(1+\|u_n\|)\bar{I}'(u_n)\rightarrow 0$ as $n\rightarrow\infty$ possesses a strongly convergent subsequence in $X_0$.
\end{lemma}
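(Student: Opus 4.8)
The plan is to run the standard $(S_+)$-type monotonicity argument for the nonlocal operator $\mathfrak L_p^s$, the only structural inputs being the nondegeneracy $a>0$ of the Kirchhoff coefficient and the compactness furnished by Lemma~\ref{embres}. Since $X_0$ is reflexive and $(u_n)$ is bounded, I would first extract a subsequence with $u_n\rightharpoonup u$ in $X_0$ for some $u\in X_0$; because $q\in(p,p_s^*)$, part~(1) of Lemma~\ref{embres} then gives $u_n\to u$ strongly in $L^p(\Omega)$ and in $L^q(\Omega)$, and, along a further subsequence, $u_n\to u$ a.e.\ in $\Omega$. Next I would record the growth of $\bar f$: from $(f_1)$, the boundedness of $g$ and $h$, the truncation of the singular term on $\{|t|\le\underline{u}_\mu\}$, and $p<q$, there is a constant with $|\bar f(x,t)|\le C(1+|t|^{q-1})$ for a.e.\ $x\in\Omega$ and all $t\in\mathbb R$ (on $\{|t|\le\underline{u}_\mu\}$ the truncated value is $x$-dependent but of adequate integrability by the construction of $\underline{u}_\mu$). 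Hence $\bigl(\bar f(\cdot,u_n)\bigr)_n$ is bounded in $L^{q'}(\Omega)$, $q'=q/(q-1)$, and Hölder's inequality together with $u_n\to u$ in $L^q(\Omega)$ yields $\int_\Omega\bar f(x,u_n)(u_n-u)\,dx\to0$.

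The second step is to exploit the Cerami-type hypothesis. Testing $(1+\|u_n\|)\bar I'(u_n)\to0$ in $X_0'$ against the bounded sequence $\phi=u_n-u$ and using \eqref{der_modi_func} gives
\[
\bigl(a+b\|u_n\|^p\bigr)\langle u_n,u_n-u\rangle-\int_\Omega\bar f(x,u_n)(u_n-u)\,dx\to0 ,
\]
so by the previous step $\bigl(a+b\|u_n\|^p\bigr)\langle u_n,u_n-u\rangle\to0$; since $a>0$, $b\ge0$ and $(\|u_n\|)$ is bounded, the coefficient stays bounded below by $a$, whence $\langle u_n,u_n-u\rangle\to0$. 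Because $u_n-u\rightharpoonup0$ in $X_0$ and $v\mapsto\langle u,v\rangle$ is a bounded linear functional on $X_0$ (one has $|\langle u,v\rangle|\le\|u\|^{p-1}\|v\|$ by Hölder), also $\langle u,u_n-u\rangle\to0$; subtracting and abbreviating $Dw:=w(x)-w(y)$ leaves
\[
\mathcal I_n:=\int_Q\bigl(|Du_n|^{p-2}Du_n-|Du|^{p-2}Du\bigr)(Du_n-Du)\,K(x-y)\,dx\,dy\to0 .
\]

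The final step, and the only genuinely delicate point, is to upgrade $\mathcal I_n\to0$ to $\|u_n-u\|\to0$; note that the integrand of $\mathcal I_n$ is pointwise nonnegative. For $p\ge2$ this is immediate from $(|\xi|^{p-2}\xi-|\eta|^{p-2}\eta)(\xi-\eta)\ge c_p|\xi-\eta|^p$, which gives $\|u_n-u\|^p\le c_p^{-1}\mathcal I_n$. For $1<p<2$ one cannot conclude directly; instead I would use $(|\xi|^{p-2}\xi-|\eta|^{p-2}\eta)(\xi-\eta)\ge(p-1)|\xi-\eta|^2(|\xi|+|\eta|)^{p-2}$ together with Hölder's inequality with exponents $2/p$ and $2/(2-p)$ to get
\[
\|u_n-u\|^p\le C\,\mathcal I_n^{\,p/2}\Bigl(\int_Q(|Du_n|+|Du|)^p\,K(x-y)\,dx\,dy\Bigr)^{(2-p)/2}\le C\,\mathcal I_n^{\,p/2}\bigl(\|u_n\|+\|u\|\bigr)^{p(2-p)/2}\to0 ,
\]
again using boundedness of $(\|u_n\|)$. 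In both regimes $u_n\to u$ in $X_0$. I expect the main obstacle to be the careful bookkeeping in the $1<p<2$ case, and secondarily making the $L^{q'}$-bound on $\bar f(\cdot,u_n)$ fully rigorous near $\partial\Omega$ (which relies on properties of $\underline{u}_\mu$ established earlier); everything else is routine.
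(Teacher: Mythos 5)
Your proposal is correct and follows essentially the same route as the paper's proof: extract a weakly convergent subsequence, use the compact embedding and the growth of the nonlinearity to show $\int_\Omega \bar f(x,u_n)(u_n-u)\,dx\to 0$, test the Cerami condition against $u_n-u$ to isolate $(a+b\|u_n\|^p)\langle u_n,u_n-u\rangle$, divide out the coefficient (bounded below by $a>0$), and conclude via the two Simon inequalities for $p\ge2$ and $1<p<2$. The only differences are cosmetic --- you bound $\bar f$ as a single term in $L^{q'}$ where the paper handles the $g$-term, the singular term and $f$ separately, and you omit the paper's digression about $u$ not vanishing on a set of positive measure --- so no further comparison is needed.
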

\begin{proof}
Let $(u_n)$ be a bounded sequence in $X_0$ with $|u_n|>\underline{u}_{\mu}$. Since $X_0$ is reflexive we have 
\begin{align}\label{conv1}
\begin{split}
& u_n\rightharpoonup u~\text{in}~X_0,\\
& u_n\rightarrow u~\text{in}~L^r(\Omega), 1\leq r< p_s^*,\\
& u_n\rightarrow u~\text{a.e. in}~\Omega.
\end{split}
\end{align}
All we need to prove is that $u_n\rightarrow u$ strongly in $X_0$. By the H\"{o}lder's inequality and $(f_1)$ we have
\begin{align}\label{holdineq1}
\begin{split}
0\leq \int_{\Omega}|f(x,u_n)|(u_n-u)dx&\leq\int_{\Omega}C(1+|u_n|^{q-1})(u_n-u)dx\\
&\leq C (|\Omega|^{\frac{q-1}{q}}+\|u_n\|_q^{q-1})\|u_n-u\|_{q}.
\end{split}
\end{align}
By \eqref{conv1} we obtain
$$\underset{n\rightarrow\infty}{\lim}\int_{\Omega}|f(x,u_n)|(u_n-u)dx=0.$$
Further we have from the H\"{o}lder's inequality that 
$$\int_{\Omega}|u_n|^{p-2}u_n(u_n-u)dx\leq\|u_n\|_p^{p-1}\|u_n-u\|_p.$$
Therefore again from \eqref{conv1} we have
$$\underset{n\rightarrow\infty}{\lim}\int_{\Omega}|u_n|^{p-2}u_n(u_n-u)dx=0.$$
We observe that $u$ cannot be zero over a subset of $\Omega$ of non-zero measure. For if it is so, then $\langle \bar{I}'(u_n),\phi \rangle\rightarrow-\infty$ for a suitable $\phi\in X_0$. But then we have from the given condition that $(1+\|u_n\|)\bar{I}'(u_n)=o(1)$ as $n\rightarrow\infty$ and this gives rise to an absurdity of $0=-\infty$.\\ 
Note that since $|u_n|>\underline{u}_{\mu}$, we have
 \begin{eqnarray}
 \underset{n\rightarrow\infty}{\lim}||u_n|^{-\gamma-1}u_n(u_n-u)|&=&
\underset{n\rightarrow\infty}{\lim}||u_n|^{1-\gamma}-u|u_n|^{-\gamma-1}u_n|=0
\end{eqnarray}
and hence $$\underset{n\rightarrow\infty}{\lim}\int_{\Omega}|u_n|^{-\gamma-1}u_n(u_n-u)dx=0.$$ 
We now define a linear functional 
$$J_v(u)=\int_{\mathbb{R}^N}|v(x)-v(y)|^{p-2}(v(x)-v(y))(u(x)-u(y))K(x-y)dxdy.$$
The H\"{o}lder inequality yields 
$$|J_{v}(u)|\leq\|v\|^{p-1}\|u\|$$ for every $u\in X_0$
thereby implying that $J$ is a continuous linear functional on $X_0$. Therefore
$$\underset{n\rightarrow\infty}{\lim}J_v(u_n-u)=0.$$
From the discussion so far in the proof of this theorem and since $u_n\rightharpoonup u$ in $X_0$, we conclude that $\langle\bar{I}'(u_n),u_n-u\rangle\rightarrow 0$ as $n\rightarrow\infty$. Also $(1+\|u_n\|_{X_0})\bar{I}'(u_n)\rightarrow 0$ in $X_0'$, the dual space of $X_0$. Therefore, we have
\begin{align}
\begin{split}
o(1)=&\langle\bar{I}'(u_n),u_n-u\rangle-\lambda\int_{\Omega}g(x)|u_n|^{p-2}u_n(u_n-u)dx\\
&-\mu\int_{\Omega}h(x)|u_n|^{-\gamma-1}u_n(u_n-u)dx-\int_{\Omega}f(x,u_n)(u_n-u)dx\\
=&(a+b\|u_n\|^p)J_{u_n}(u_n-u)+o(1)~\text{as}~n\rightarrow\infty.
\end{split}
\end{align}
Hence, by the boundedness of $(u_n)$ in $X_0$ and $\underset{n\rightarrow\infty}{\lim}J_v(u_n-u)=0$ we have 
\begin{eqnarray}
\underset{n\rightarrow\infty}{\lim}(J_{u_n}(u_n-u)-J_{u}(u_n-u))&=&0.
\end{eqnarray}
Recall the Simon inequalities which is as follows.
\begin{align}\label{simon}
\begin{split}
|U-V|^{p}&\leq C_p(|U|^{p-2}U-|V|^{p-2}V).(U-V),~p\geq 2\\
|U-V|^{p}&\leq C_p'(|U|^{p-2}U-|V|^{p-2}V)^{\frac{p}{2}}.(|U|^p+|V|^p)^{\frac{2-p}{2}},~1<p<2
\end{split}
\end{align}
for all $U,V\in\mathbb{R}^N$, $C_p, C_p'>0$ are constants.\\
When $p\geq 2$, we have 
\begin{align}
\begin{split}
\|u_n-u\|^p\leq& C_p\int_{\mathbb{R}^{2N}}[|u_n(x)-u_n(y)|^{p-2}(u_n(x)-u_n(y))-|u(x)-u(y)|^{p-2}(u(x)-u(y))]\\
&\times [(u_n(x)-u_n(y))-(u(x)-u(y))]K(x-y)dxdy\\
=&C_p[J_{u_n}(u_n-u)-J_u(u_n-u)]\rightarrow 0~\text{as}~n\rightarrow\infty.
\end{split}
\end{align}
When $1<p<2$
\begin{align}
\begin{split}
\|u_n-u\|^p\leq& C_p'[J_{u_n}(u_n-u)-J_{u}(u_n-u)]^{\frac{p}{2}}(\|u_n\|^p+\|u\|^p)^{\frac{2-p}{2}}\\
\leq&C_p'[J_{u_n}(u_n-u)-J_{u}(u_n-u)]^{\frac{p}{2}}(\|u_n\|^{\frac{2-p}{2}}+\|u\|^{\frac{2-p}{2}})\rightarrow 0~\text{as}~n\rightarrow\infty. 
\end{split}
\end{align}
Thus $u_n\rightarrow u$ strongly in $X_0$ as $n\rightarrow\infty$.
\end{proof}
\begin{lemma}
Let $(f_1), (f_3), (f_5)$ hold. Then the functional $\bar{I}$ satisfies the $(Ce)_c$ condition.
\end{lemma}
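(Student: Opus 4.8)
The plan is to verify the Cerami condition by showing that any sequence $(u_n)\subset X_0$ with $\bar I(u_n)\to c$ and $(1+\|u_n\|)\bar I'(u_n)\to 0$ is bounded in $X_0$, so that Lemma \ref{Ce_lemma} applies and furnishes a strongly convergent subsequence. Thus the entire content of the lemma reduces to the boundedness of Cerami sequences, and this is precisely where the non-AR hypotheses $(f_3)$ and $(f_5)$ (through $(f_6)$ with $\sigma=1$) must be used in place of the standard Ambrosetti--Rabinowitz argument.

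First I would argue by contradiction: suppose $\|u_n\|\to\infty$ and set $v_n=u_n/\|u_n\|$, so $\|v_n\|=1$. Passing to a subsequence, $v_n\rightharpoonup v$ in $X_0$, $v_n\to v$ in $L^r(\Omega)$ for $r\in[1,p_s^*)$ and $v_n\to v$ a.e. There are two cases. If $v\not\equiv 0$, then on the set $\{v\neq 0\}$ we have $|u_n(x)|\to\infty$, and $(f_3)$ gives $\bar F(x,u_n)/|u_n|^{2p}\to\infty$ there; combining this with $\bar F\geq -C$ (a consequence of $(f_1)$) and Fatou's lemma, one gets $\int_\Omega \bar F(x,u_n)\,dx / \|u_n\|^{2p}\to\infty$. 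On the other hand, from $\bar I(u_n)\to c$ and the definition \eqref{modi_func},
\begin{equation}
\frac{1}{\|u_n\|^{2p}}\int_\Omega \bar F(x,u_n)\,dx=\frac{a}{p\|u_n\|^p}+\frac{b}{2p}-\frac{\bar I(u_n)}{\|u_n\|^{2p}}\to \frac{b}{2p}<\infty,
\end{equation}
a contradiction. (If $b=0$ the same contradiction holds with the limit $0$; the growth from $(f_3)$ still beats the polynomial terms since $2p>p$.)

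In the second case $v\equiv 0$. Here I would use the decomposition coming from $(f_5)$/$(f_6)$ with $\sigma=1$, namely $\mathfrak G(x,t)=\frac{1}{2p}tf(x,t)-F(x,t)$ is, up to the translation by $\bar u_\mu$ absorbed into the modified $\bar f$, nonnegative and monotone in $|t|$. Following the Jeanjean-type scheme, pick $t_n\in[0,1]$ with $\bar I(t_n u_n)=\max_{t\in[0,1]}\bar I(t u_n)$. Since $\|v_n\|=1$ and $v_n\to 0$ in every $L^r$, $r<p_s^*$, one shows $\int_\Omega \bar F(x,R v_n)\,dx\to 0$ for each fixed $R>0$ (using $(f_1)$ and the dominated convergence / continuity of Nemytskii maps), hence $\bar I(t_n u_n)\geq \bar I(R v_n)\geq \frac{a}{p}R^p+\frac{b}{2p}R^{2p}-o(1)$, which forces $\bar I(t_n u_n)\to\infty$. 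Then $t_n\in(0,1)$ for large $n$, so $\frac{d}{dt}\bar I(t u_n)|_{t=t_n}=0$, i.e. $\langle \bar I'(t_n u_n), t_n u_n\rangle=0$. A short computation gives
\begin{equation}
\Bigl(1-\tfrac{1}{2p}\Bigr)\langle \bar I'(t_nu_n),t_nu_n\rangle + \int_\Omega \mathfrak G(x,t_nu_n)\,dx = \bar I(t_nu_n) + \text{(lower order Kirchhoff term)},
\end{equation}
so that $\int_\Omega \mathfrak G(x,t_n u_n)\,dx\to\infty$. But the monotonicity in $(f_6)$ with $0\leq |t_n u_n|\leq |u_n|$ yields $\int_\Omega \mathfrak G(x,t_n u_n)\,dx\leq \int_\Omega \mathfrak G(x,u_n)\,dx + \|T\|_{L^1}$, while from $\bar I(u_n)\to c$ and $(1+\|u_n\|)\bar I'(u_n)\to 0$ one has $\int_\Omega \mathfrak G(x,u_n)\,dx = \bar I(u_n)-\frac{1}{2p}\langle \bar I'(u_n),u_n\rangle + o(1)\to c$, a bounded quantity — contradiction.

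Having ruled out both cases, $(u_n)$ is bounded in $X_0$; Lemma \ref{Ce_lemma} then gives a strongly convergent subsequence, so $\bar I$ satisfies $(Ce)_c$ for every $c\in\mathbb R$. The main obstacle is the $v\equiv 0$ case: one must carefully set up the cut-off maximization $t_n\in[0,1]$, justify $\int_\Omega \bar F(x,Rv_n)\,dx\to 0$ using only $(f_1)$ and the compact embedding of Lemma \ref{embres}, and then convert the criticality of $t_n$ together with $(f_5)$ (equivalently $(f_6)$ with $\sigma=1$) into the $\mathfrak G$-inequality; the presence of the Kirchhoff factor $a+b\|u\|^p$ and of the singular/linear terms folded into $\bar f$ means the algebraic identities must be tracked with a little care, but they only contribute terms of lower order in $\|u_n\|$ and do not affect the argument.
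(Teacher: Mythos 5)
Your proposal takes essentially the same route as the paper: reduce the Cerami condition to boundedness of Cerami sequences (Lemma \ref{Ce_lemma} then closes the argument), normalize $v_n=u_n/\|u_n\|$, and split into the case $v\not\equiv 0$ (where $(f_3)$ plus Fatou contradicts $\bar I(u_n)/\|u_n\|^{2p}\to b/2p$) and the case $v\equiv 0$ (Jeanjean's maximization over $[0,1]$ combined with the $\mathfrak G$-monotonicity that $(f_5)$ supplies, i.e.\ $(f_6)$ with $\sigma=1$). One step in your $v\equiv 0$ case is, however, stated incorrectly. The relevant identity is
\begin{equation*}
\bar I(w)-\tfrac{1}{2p}\langle\bar I'(w),w\rangle=\tfrac{a}{2p}\|w\|^p+\int_\Omega\mathfrak G(x,w)\,dx+O(1),
\end{equation*}
and since $\|u_n\|\to\infty$ the term $\tfrac{a}{2p}\|u_n\|^p$ is not of lower order: consequently $\int_\Omega\mathfrak G(x,u_n)\,dx$ does \emph{not} converge to $c$ (it may well tend to $-\infty$), and $\bar I(t_nu_n)\to\infty$ together with $\langle\bar I'(t_nu_n),t_nu_n\rangle=0$ does not by itself force $\int_\Omega\mathfrak G(x,t_nu_n)\,dx\to\infty$. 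The repair is to keep the two terms together, exactly as in the paper's chain \eqref{conv9}: since $t_n\in[0,1]$ one has
\begin{equation*}
\bar I(t_nu_n)=\tfrac{a}{2p}\|t_nu_n\|^p+\int_\Omega\mathfrak G(x,t_nu_n)\,dx+O(1)\leq\tfrac{a}{2p}\|u_n\|^p+\int_\Omega\mathfrak G(x,u_n)\,dx+\|T\|_{L^1}+O(1),
\end{equation*}
and the right-hand side equals $\bar I(u_n)-\tfrac{1}{2p}\langle\bar I'(u_n),u_n\rangle+O(1)=c+O(1)$, contradicting $\bar I(t_nu_n)\to\infty$. With that one-line correction your argument coincides with the paper's proof.
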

\begin{proof}
Let $(f_5)$ hold. From the monotonicity of $t \mapsto \frac{f(x,t)}{t^{2p-1}}$ there exists $C_1>0$ such that
\begin{eqnarray}
\mathfrak{G}(x,s)&\leq&\mathfrak{G}(x,t)+C,
\end{eqnarray}
for all $x\in\Omega$ and $0\leq |s|\leq |t|$. Refer $(f_6)$ for the definition of $\mathfrak{G}(.,.)$. Let $(u_n)$ be a Cerami sequence in $X_0$. Thus for $c\in\mathbb{R}$
\begin{align}\label{Ce_cond1}
\begin{split}
\bar{I}(u_n)\rightarrow c~\text{in}~X_0\\
(1+\|u_n\|)\bar{I}'(u_n)\rightarrow 0~\text{in}~X_0'
\end{split}
\end{align}
as $n\rightarrow\infty$. All we need to show is that the sequence $(u_n)$ is bounded in $X_0$ and the conclusion will follow from the Lemma \ref{Ce_lemma}.\\
Suppose not, i.e. {there exists a subsequence} such that $\|u_n\|_{X_0}\rightarrow\infty$. By the second condition of \eqref{Ce_cond1} we have 
$$\bar{I}'(u_n)\rightarrow 0$$ as $n\rightarrow\infty$.
\noindent Hence 
$$\|u_n\|\left\langle \bar{I}'(u_n),\frac{u_n}{\|u_n\|} \right\rangle\rightarrow 0$$ as $n\rightarrow\infty$. We define $\xi_n=\frac{u_n}{\|u_n\|}$ so that $\|\xi_n\|=1$. Thus $(\xi_n)$ is a bounded sequence and hence 
\begin{align}\label{conv_rule}
\begin{split}
\xi_n\rightarrow \xi~\text{in}~L^{p}(\Omega)\\
\xi_n\rightarrow \xi~\text{in}~L^{q}(\Omega)\\
\xi_n\rightarrow \xi~\text{a.e. in}~ \Omega.
\end{split}
\end{align}
Further, from Lemma A.1  of \cite{xiang5} there exists a function $\alpha(x)$ such that 
$$|\xi_n(x)|\leq\alpha(x)~\text{in}~\mathbb{R}^N.$$
This leads to the consideration of two cases, {\it viz.} $\xi=0$ and $\xi\neq 0$. Since $\bar{I}$ is a $C^1$ functional, therefore
$\bar{I}(\alpha_n u_n)=\underset{\alpha \in[0,1]}{\max}\bar{I}(\alpha u_n)$ makes sense.\\
Let $\xi=0$ and define $h_T=\left(\frac{4p T}{b}\right)^{\frac{1}{2p}}$ such that $\frac{h_{T}}{\|u_n\|}\in(0,1)$ for $T\in\mathbb{N}$ and sufficiently large $n$, say for $n\geq n(T)$. Since $\xi=0$
and by \eqref{conv_rule} we have 
\begin{eqnarray}\label{conv2}
\int_{\Omega}|h_T\xi_n|^pdx\rightarrow 0
\end{eqnarray}
as $n\rightarrow\infty$. By the continuity of $F$ we obtain 
\begin{eqnarray}\label{conv3}
F(x,h_T\xi_n(x))\rightarrow F(x,h_T\xi(x))=F(x,0)~\text{in}~\Omega~\text{as}~n\rightarrow\infty.
\end{eqnarray}
From $(f_1)$ and $|\xi_n(x)|\leq\alpha(x)~\text{in}~\mathbb{R}^N$ in combination with the H\"{o}lder inequality we get 
\begin{eqnarray}\label{con4}
|F(x,h_T\xi_n)|&\leq&C|h_T\alpha(x)|+\frac{C}{q}|h_T\alpha(x)|^q~\in L^1(\Omega)
\end{eqnarray}
for any $n,T\in\mathbb{N}$. Therefore from the Lebesgue dominated convergence theorem we get
\begin{eqnarray}\label{conv5}
F(.,h_T\xi_n(.))\rightarrow F(.,h_T\xi(.))~\text{in}~L^1(\Omega)~\text{as}~n\rightarrow\infty.
\end{eqnarray}
for any $T\in\mathbb{N}$. Therefore, since $F(x,0)=0$ for each $x\in\Omega$, $\xi=0$ and by \eqref{conv5}, we have $$\int_{\Omega}F(x,h_T\xi_n(x))dx\rightarrow 0~\text{as}~n\rightarrow\infty.$$
We further have 
\begin{eqnarray}\label{conv6}
\bar{I}(\alpha_nu_n)&\geq&\bar{I}\left(\frac{h_T}{\|u_n\|}u_n\right)\nonumber\\
&=&\bar{I}(h_T\xi_n)\nonumber\\
&\geq&\frac{a}{p}\|h_T\xi_n\|^p+\frac{b}{2p}\|h_T\xi_n\|^{2p}-\frac{\lambda\|g\|_{\infty}}{p}\int_{\Omega}|h_T\xi_n|^pdx\nonumber\\
& &-\frac{\mu\|h\|_{\infty}}{1-\gamma}\int_{\Omega}|h_T\xi_n|^{1-\gamma}dx-\int_{\Omega}F(x,h_T\xi_n)dx\nonumber\\
&\geq&\frac{b}{2p}\|h_T\xi_n\|^{2p}+o(1)=2T
\end{eqnarray}
as $n\rightarrow\infty$ and for any $T\in\mathbb{N}$.
Therefore \begin{eqnarray}\label{conv6'}\bar{I}(\alpha_nu_n)\rightarrow\infty\end{eqnarray} as $n\rightarrow\infty$. We now show that 
$$\underset{n\rightarrow\infty}{\lim}\sup \bar{I}(\alpha_nu_n)\leq\beta_0$$
for some $\beta_0>0$. Since $\int_{\Omega}\frac{|u_n|^p}{\|u_n\|^p}dx\leq C$ and $\frac{1}{\|u_n\|^p}\leq C'$ for all $n\in\mathbb{N}$ we have that $\int_{\Omega} |u_n|^pdx\leq C''$ for all $n\in\mathbb{N}$. Furthermore, since the boundary $\partial\Omega$ is Lipschitz continuous we have that 
\begin{align}\label{conv7}
\begin{split}
&\left|\frac{\lambda}{2p}\int_{\Omega}g(x)|\alpha_nu_n|^pdx\right|\leq C(\lambda,p)\|h\|_{\infty}<\infty\\
&\left|\mu\left(\frac{1}{1-\gamma}-\frac{1}{2p}\right)\int_{\Omega}h(x)(\alpha_nu_n)^{1-\gamma}dx\right|\leq C(\mu,\gamma,p)\|h\|_{\infty}<\infty.
\end{split}
\end{align} 
Since $\frac{d}{d\alpha}|_{\alpha=\alpha_n}\bar{I}(\alpha u_n)=0$ for all $n$ we have
\begin{eqnarray}\label{conv8}
\langle \bar{I}'(\alpha_nu_n),\alpha_nu_n \rangle&=&\alpha_n\frac{d}{d\alpha}|_{\alpha=\alpha_n}\bar{I}(\alpha u_n)=0.
\end{eqnarray}
We further have
\begin{eqnarray}
\bar{I}(\alpha_nu_n)&=&\bar{I}(\alpha_nu_n)-\frac{1}{2p}\langle \bar{I}'(\alpha_nu_n),\alpha_nu_n \rangle\nonumber\\
&=&\frac{a}{2p}\|\alpha_n u_n\|^p-\frac{\lambda}{2p}\int_{\Omega}g(x)|\alpha_nu_n|^pdx-\mu\left(\frac{1}{1-\gamma}-\frac{1}{2p}\right)\int_{\Omega}h(x)(\alpha_nu_n)^{1-\gamma}dx\nonumber\\
& &-\int_{\Omega}F(x,\alpha_nu_n)dx+\frac{1}{2p}\int_{\Omega}f(x,\alpha_nu_n)\alpha_nu_ndx\nonumber\\
&\leq&\frac{a}{2p}\|\alpha_nu_n\|^p+C(\lambda,p)+C(\mu,\gamma,p)+\int_{\Omega}\mathfrak{G}(x,\alpha_nu_n)dx\nonumber\\
&\leq&\frac{a}{2p}\|\alpha_nu_n\|^p+C(\lambda,p)+C(\mu,\gamma,p)+\int_{\Omega}\mathfrak{G}(x, u_n)dx+C|\Omega|\nonumber
\end{eqnarray}
\begin{eqnarray}\label{conv9}
&=&\frac{a}{2p}\|\alpha_nu_n\|^p+C(\lambda,p)+C(\mu,\gamma,p)+\int_{\Omega}\mathfrak{G}(x, u_n)dx+C|\Omega|\nonumber\\
& &-\frac{\lambda}{2p}\int_{\Omega}g(x)|\alpha_nu_n|^pdx+\frac{\lambda}{2p}\int_{\Omega}g(x)|\alpha_nu_n|^pdx\nonumber\\
& &-\mu\left(\frac{1}{1-\gamma}-\frac{1}{2p}\right)\int_{\Omega}h(x)(\alpha_nu_n)^{1-\gamma}dx
+\mu\left(\frac{1}{1-\gamma}-\frac{1}{2p}\right)\int_{\Omega}h(x)(\alpha_nu_n)^{1-\gamma}dx\nonumber\nonumber\\
& &+C|\Omega|\nonumber\\
&\leq&\overline{I}(\alpha_nu_n)-\frac{1}{2p}\langle \overline{I}'(\alpha_nu_n),\alpha_nu_n\rangle+2C(\lambda,p)+2C(\mu,\gamma,p)+C|\Omega|\nonumber\\
&=&c+o(1)+2C(\lambda,p)+2C(\mu,\gamma,p)+C|\Omega|<\infty~\text{as}~n\rightarrow\infty.
\end{eqnarray}
where $|\Omega|$ is the Lebesgue measure of $\Omega$. This is a contradiction to \eqref{conv6'}. Thus $(u_n)$ is bounded in $X_0$.\\
Let $\xi\neq 0$. Define 
$$A=\{x\in\Omega:\xi(x)\neq 0\}.$$
Therefore we have
\begin{eqnarray}\label{conv10}
|u_n(x)|&=&|\xi_n(x)|\|u_n\|\rightarrow\infty~\text{in}~A~\text{as}~n\rightarrow\infty.
\end{eqnarray}
Further from $(f_3)$
\begin{eqnarray}\label{conv11}
\frac{F(x,u_n(x))}{\|u_n\|^{2p}}&=&\frac{F(x,u_n(x))}{|u_n(x)|^{2p}}\frac{|u_n(x)|^{2p}}{\|u_n\|^{2p}}\nonumber\\
&=&\frac{F(x,u_n(x))}{|u_n(x)|^{2p}}|\xi_n|^{2p}\rightarrow\infty~\text{in}~A~\text{as}~n\rightarrow\infty.
\end{eqnarray}
By the Fatou's lemma
\begin{eqnarray}\label{conv12}
\int_{A}\frac{F(x,u_n(x))}{\|u_n\|^{2p}}dx\rightarrow\infty.
\end{eqnarray}
Let us now analyse the case over $\Omega\setminus A$. From $(f_3)$ again we have $$\underset{|t|\rightarrow\infty}{\lim}F(x,t)=\infty$$
for $x\in\Omega$. Therefore for arbitrary $M>0$ there exists $t'$ such that 
\begin{eqnarray}\label{conv13}
F(x,t)\geq M~\text{whenever}~|t|\geq t', x\in\Omega. 
\end{eqnarray}
Hence
\begin{eqnarray}\label{conv14}
F(x,t)&\geq\min\left\{M, \underset{(x,t)\in\Omega\times[-t',t']}{\min}\{F(x,t)\}\right\}=M'.
\end{eqnarray}
So we get 
\begin{eqnarray}\label{conv15}
\underset{|t|\rightarrow\infty}{\lim}\int_{\Omega\setminus A}\frac{F(x,u_n(x))}{\|u_n\|^{2p}}dx\geq 0.
\end{eqnarray}
Also
\begin{eqnarray}\label{conv15'}
0\leq\frac{\mu}{1-\gamma}\int_{\Omega}h(x)\frac{|u_n|^{1-\gamma}}{\|u_n\|^{2p}}dx&\leq&\frac{\mu\|h\|_{\infty}}{1-\gamma}\|u_n\|^{1-\gamma-2p}=o(1).
\end{eqnarray}
By the variational characterization (refer \cite{ren1}) of $\lambda_j$, the $j$-th eigenvalue of $\mathfrak{L}_p^s$ is
$$\lambda_j=\underset{u\in X_0\setminus\{0\}}{\min}\left\{\frac{\int_{\mathbb{R}^{2N}}|u(x)-u(y)|^{p}K(x-y)dxdy}{\int_{\Omega}|u(x)|^pdx}\right\}.$$ 
Now since
\begin{align}\label{conv16}
\begin{split}
o(1)&=\frac{\overline{I}(u_n)}{\|u_n\|^{2p}}\\
=&\frac{a}{p\|u_n\|^{p}}+\frac{b}{2p}-\frac{\lambda}{p}\int_{\Omega}g(x)\frac{|u_n(x)|^p}{\|u_n\|^{2p}}dx\\
&-\frac{\mu}{1-\gamma}\int_{\Omega}h(x)\frac{|u_n|^{1-\gamma}}{\|u_n\|^{2p}}dx-\int_{A}\frac{F(x,u_n)}{\|u_n\|^{2p}}dx-\int_{\Omega\setminus A}\frac{F(x,u_n)}{\|u_n\|^{2p}}dx\\
\leq&o(1)+\frac{b}{2p}+\frac{\lambda\|g\|_{\infty}}{p\lambda_j}\frac{1}{\|u_n\|^{p}}-\frac{\mu}{1-\gamma}\int_{\Omega}h(x)\frac{|u_n|^{1-\gamma}}{\|u_n\|^{2p}}dx-\int_{A}\frac{F(x,u_n)}{\|u_n\|^{2p}}dx-\int_{\Omega\setminus A}\frac{F(x,u_n)}{\|u_n\|^{2p}}dx\\
\leq&-\infty
\end{split}
\end{align}
where the last step is due to \eqref{conv12}, \eqref{conv15}, \eqref{conv15'}. This is again an absurdity. Thus the sequence $(u_n)$ is bounded in $X_0$ and hence by the Lemma \ref{Ce_lemma} we conclude that $(u_n)$ possesses a strongly convergent subsequence in $X_0$.
\end{proof}
\noindent We now prove the results stated in the Theorems \ref{mainthm1} and \ref{mainthm2}. For this let us develope some prerequisites. It is well known that the space $X_0$ is a Banach space and we have that 
$$X_0=\underset{i\geq 1}{\bigoplus}X_i$$
where $X_{i}=\text{span}\{e_j\}_{j\geq i}$. Define
$$Y_m=\underset{1\leq j \leq m}{\bigoplus}X_j$$
$$Z_m=\underset{j\geq m}{\bigoplus}X_j.$$
Clearly, $Y_m$ is a finite dimensional subspace of $X_0$, for each $m$.
\begin{theorem}\label{conv17}
Let $\kappa\in[1,p_s^*)$. We have 
$$\zeta_m(\kappa)=\sup\{\|u\|_{\kappa}:u\in Z_m,\|u\|=1\}\rightarrow 0$$ as $m\rightarrow\infty$.
\end{theorem}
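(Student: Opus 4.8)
The idea is to combine the monotonicity of $m\mapsto\zeta_m(\kappa)$ with the \emph{compact} embedding $X_0\hookrightarrow L^\kappa(\Omega)$, which is available precisely because $\kappa<p_s^*$, together with the basis structure $X_0=\bigoplus_{i\geq1}X_i$, $X_i=\mathrm{span}\{e_i\}$. First I would record that each $\zeta_m(\kappa)$ is finite: by part $(2)$ of Lemma~\ref{embres} the embedding $X_0\hookrightarrow L^\kappa(\Omega)$ is continuous, so $\|u\|_\kappa\leq C\|u\|$ for all $u\in X_0$. Since $Z_{m+1}\subset Z_m$, the sequence $(\zeta_m(\kappa))_m$ is non-increasing and bounded below by $0$, hence it converges to some $\zeta\geq 0$; it then remains only to prove $\zeta=0$.

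Next, for each $m$ I would pick a near-maximizer $u_m\in Z_m$ with $\|u_m\|=1$ and $\|u_m\|_\kappa\geq\zeta_m(\kappa)-\tfrac1m$. The sequence $(u_m)$ is bounded in the reflexive space $X_0$, so along a subsequence $u_m\rightharpoonup u$ in $X_0$. Because $\kappa\in[1,p_s^*)$, part $(1)$ of Lemma~\ref{embres} says the embedding $X_0\hookrightarrow L^\kappa(\Omega)$ is compact, so in fact $u_m\to u$ strongly in $L^\kappa(\Omega)$, and in particular $\|u_m\|_\kappa\to\|u\|_\kappa$.

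Then I would show $u=0$. Let $e_j^*\in X_0'$ be the coordinate functionals associated with the Schauder basis $(e_j)$ of $X_0$ (bounded and linear, characterized by $e_j^*(e_j)=1$ and $e_j^*\equiv0$ on $X_k$ for $k\neq j$; their continuity follows from the completeness of $X_0$ and the basis property provided by Proposition~$9$ of \cite{serva3}). Since $u_m\in Z_m=\overline{\bigoplus_{j\geq m}X_j}$, we have $e_j^*(u_m)=0$ whenever $j<m$. Fixing $j$ and letting $m\to\infty$ along the subsequence, weak convergence gives $e_j^*(u)=\lim_m e_j^*(u_m)=0$. As $j$ is arbitrary, every coordinate of $u$ vanishes, so $u=0$. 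Hence $\|u_m\|_\kappa\to 0$, and since $\zeta_m(\kappa)-\tfrac1m\leq\|u_m\|_\kappa$ with $\zeta_m(\kappa)\to\zeta$, we conclude $\zeta\leq 0$, that is $\zeta=0$, which finishes the argument.

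\textbf{Main obstacle.} Everything here is essentially soft functional analysis; the only point requiring care is the identification of the weak limit as $0$. This step genuinely uses that $X_0$ admits $(e_n)$ as a (Schauder) basis with bounded coordinate functionals, and it uses the \emph{strict} inequality $\kappa<p_s^*$ in an essential way — for $\kappa=p_s^*$ the embedding $X_0\hookrightarrow L^\kappa(\Omega)$ is only continuous, the strong $L^\kappa$-convergence $u_m\to u$ is lost, and the conclusion $\zeta_m(\kappa)\to 0$ indeed need not hold.
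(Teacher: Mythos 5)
Your proposal is correct and follows essentially the same route as the paper: monotonicity of $\zeta_m$, selection of near-maximizers $u_m\in Z_m$ with $\|u_m\|=1$, weak convergence by reflexivity, and the compact embedding of Lemma \ref{embres} to pass to strong $L^\kappa$-convergence. The only difference is that the paper simply asserts $u_m\rightharpoonup 0$, whereas you justify the identification of the weak limit as zero via the coordinate functionals of the basis $(e_n)$ — a detail worth having, and consistent with the basis structure the paper assumes from Proposition 9 of \cite{serva3}.
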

\begin{proof}
From the definition of $(Z_m)$ we have that $Z_{m+1}\subset Z_m$ and thus $0\leq\zeta_{m+1}\leq\zeta_m$. This implies that $\zeta_m\rightarrow\zeta\geq 0$ as $m\rightarrow\infty$. Further by the definition of {\it supremum} for every $m$ there exists $u_m\in Z_m$ such that $\|u_m\|=1$ and $\|u_m\|_{\kappa}>\frac{\zeta_m}{2}$. By the reflexivity of $X_0$ and $\|u_m\|=1$ we have that $u_m\rightharpoonup 0$ in $X_0$. By the embedding results from Lemma \ref{embres} we have that $u_m\rightarrow 0$ as $m\rightarrow\infty$ in $L^{\kappa}(\Omega)$ for any $\kappa\in[1,p_s^*)$. Thus we conclude that $\zeta=0$.
\end{proof}
\noindent {\it Proof of Theorem \ref{mainthm1}}:~Since $Y_m$ is a finite dimensional space, hence the norms $\|\cdot\|$ and $\|\cdot\|_{\kappa}$ are equivalent for $\kappa\in[1,p_s^*)$. Mathematically this means that there exists $C_1, C_2>0$ such that 
\begin{align}\label{norm_equiv_fin_dim_sp}
C_1\|u\|_{X_0}\leq\|u\|_{\kappa}\leq C_2\|u\|_{X_0},~\text{for any}~\kappa\in[1,p_s^*).
\end{align}
Observe that $\bar{I}(u)=0$. If $\frac{\lambda}{a}>\lambda_1$, then from the definition of $\lambda_j$ given in the previous theorem, for any $\lambda,\mu\in\mathbb{R}^+$ $\exists j>1$ such that $\lambda_j$ such that $\frac{\lambda}{a}\in[\lambda_{j-1},\lambda_j)$, $\frac{\mu}{a}\in[\lambda_{k-1},\lambda_k)$. Further, from $(f_1)$ and $(f_4)$, for every $\epsilon>0$ there exists $C_{\epsilon}>0$ such that 
\begin{eqnarray}\label{conv18}
F(x,t)&\leq&\frac{\epsilon}{p}|t|^{p}+\frac{C_{\epsilon}}{q}|t|^q,
\end{eqnarray}
for any $(x,t)\in\Omega\times\mathbb{R}$. By the definition of $\zeta_m$ in Lemma \ref{conv17}, fix $\epsilon>0$ and choose $m'\geq 1$ such that 
\begin{align}\label{conv19}
\begin{split}
\|u\|_p^p&\leq\frac{a-\frac{\lambda}{\lambda_j}}{2\epsilon}\|u\|^p\\
\|u\|_q^q&\leq\frac{q(a-\frac{\lambda}{\lambda_j})}{2p C_{\epsilon}}\|u\|^q~\text{for every}~u\in Z_{m'}.
\end{split}
\end{align}
Choose $\|u\|=r<1$ sufficiently small, in Theorem \ref{symmMPT}. Since $q>p$, we have
\begin{align*}
\begin{split}
\bar{I}(u)&=\frac{a}{p}\|u\|^p+\frac{b}{2p}\|u\|^{2p}-\frac{\lambda}{p}\int_{\Omega}g(x)|u(x)|^pdx-\frac{\mu}{1-\gamma}\int_{\Omega}h(x)u^{1-\gamma}dx-\int_{\Omega}F(x,u)dx\\
&\geq \frac{a}{p}\|u\|^p-\frac{\lambda\|g\|_{\infty}}{p\lambda_j}\|u\|^p-\frac{C\mu\|h\|_{\infty}}{1-\gamma}\|u\|^{1-\gamma}-\frac{\epsilon}{p}\|u\|_p^{p}-\frac{C_{\epsilon}}{q}\|u\|_q^q
\end{split}
\end{align*}
\begin{align}\label{conv20}
\begin{split}
&\geq \left(\frac{a}{p}-\frac{\lambda\|g\|_{\infty}}{p\lambda_j}\right)\|u\|^p-\frac{C\mu\|h\|_{\infty}}{1-\gamma}\|u\|^{1-\gamma}-\frac{\left(a-\frac{\lambda}{\lambda_j}\right)}{2p}\|u\|^{p}-\frac{\left(a-\frac{\lambda}{\lambda_j}\right)}{2p}\|u\|^q\\
&\geq \frac{a-\frac{\lambda}{\lambda_j}}{2p}\|u\|^p-\frac{a-\frac{\lambda}{\lambda_j}}{2p}\|u\|^q-\frac{C\mu\|g\|_{\infty}}{1-\gamma}\|u\|^{1-\gamma}+\frac{\lambda}{p\lambda_j}(1-\|g\|_{\infty})\|u\|^p\\
&=\frac{a-\frac{\lambda}{\lambda_j}}{2p}(r^p-r^q)-\frac{C\mu\|h\|_{\infty}}{1-\gamma}r^{1-\gamma}+\frac{\lambda}{p\lambda_j}(1-\|g\|_{\infty})r^p=R>0
\end{split}
\end{align}
for sufficiently small $\mu>0$ (the choice of $r$ tackles the case of $\|g\|_{\infty}>1$). Finally, from $(f_3)$, there exists $C_0>\frac{b}{2pC^{2p}}$ (a possible choice of $C$, as we shall see later, is a Sobolev constant), $C_1>0$ such that 
\begin{eqnarray}\label{conv21}
F(x,t)&\geq&C_0|t|^{2p}
\end{eqnarray}
for any $x\in\Omega$ and $|t|>C_1$. From $(f_1)$ we have
\begin{eqnarray}\label{conv22}
|F(x,t)|&\leq&C(1+C_1^{q-1})|t|,~\text{for every}~x\in\Omega~\text{and}~|t|\leq C_1.
\end{eqnarray}
Let $C'=C(1+C_2^{q-1})>0$. Then we get
\begin{eqnarray}\label{conv23}
F(x,t)&\geq&C_0|t|^{2p}-C'|t|,~\text{for}~(x,t)\in\Omega\times\mathbb{R}.
\end{eqnarray}
By the equivalence of norm in $Y_m$ and \eqref{conv23}, we have
\begin{align}\label{conv24}
\begin{split}
\bar{I}(u)=&\frac{a}{p}\|u\|^p+\frac{b}{2p}\|u\|^{2p}-\frac{\lambda}{p}\int_{\Omega}g(x)|u|^pdx -\frac{\mu}{1-\gamma}\int_{\Omega}h(x)|u|^{1-\gamma}dx\\
&-\int_{\Omega}F(x,u)dx\\
\leq&\frac{a}{p}\|u\|^p+\frac{b}{2p}\|u\|^{2p}-\frac{\lambda}{p}\int_{\Omega}g(x)|u|^pdx -\int_{\Omega}F(x,u)dx\\
\leq&\frac{a}{p}\|u\|^p+\frac{b}{2p}\|u\|^{2p}-\frac{\lambda}{p\lambda_j}\|u\|^p-C_0\|u\|_{2p}^{2p}+C'\|u\|_1\\
\leq&\frac{a}{p}\|u\|^p+\left(\frac{b}{2p}-C_0 C^{2p}\right)\|u\|^{2p}+C_2C'\|u\|.
 \end{split}
\end{align}
Thus for a sufficiently large $r_0=r(\bar{X})$, we have $\bar{I}(u)\leq 0$ whenever $\|u\|\geq r_0$. Hence, by the Theorem \ref{symmMPT}, there exists an unbounded sequence of critical values of $\bar{I}$ characterized by a minimax argument. In other words, from the Remark \ref{key_obs} the problem in \eqref{auxprob} has infinitely many solutions and hence the problem \eqref{main} also has infinitely many solutions.\\\\
{\it Proof of Theorem \ref{mainthm2}}:~Suppose now $(f_6)$ holds. The proof follows {\it verbatim} of the Theorem \ref{mainthm1}, except that we need to prove the inequality in \eqref{conv9}. Thus we have 
\begin{align*}
\begin{split}
\frac{1}{\sigma}\bar{I}(\alpha_nu_n)=&\frac{1}{\sigma}\left(\bar{I}(\alpha_nu_n)-\frac{1}{2p}\langle \bar{I}'(\alpha_nu_n),\alpha_nu_n \rangle\right)\\
=& \frac{1}{\sigma}\left[\left(\frac{a}{2p} \right)\|\alpha_nu_n\|^p+\frac{\lambda}{2p}\int_{\Omega}g(x)|\alpha_nu_n|^pdx-\mu\left(\frac{1}{1-\gamma}-\frac{1}{2p}\right)\int_{\Omega}h(x)(\alpha_nu_n)^{1-\gamma}dx\right.  \\
&\left.-\int_{\Omega}F(x,\alpha_nu_n)dx+\frac{1}{2p}\int_{\Omega}f(x,\alpha_nu_n)\alpha_nu_ndx\right]\\
\leq&\frac{1}{\sigma}\left[\left(\frac{a}{2p} \right)\|\alpha_nu_n\|^p+\int_{\Omega}\mathfrak{G}(x,\alpha_nu_n)dx\right]+C(\lambda,p)\|g\|_{\infty}\\
\leq&\left(\frac{a}{2p} \right)\|\alpha_nu_n\|^p+\int_{\Omega}\mathfrak{G}(x,\alpha_nu_n)dx+\frac{1}{\sigma}\int_{\Omega}T(x)dx+C(\lambda,p)\|g\|_{\infty}\\
\leq&\left(\frac{a}{2p} \right)\|\alpha_nu_n\|^p+\int_{\Omega}\mathfrak{G}(x,\alpha_nu_n)dx+\frac{1}{\sigma}\int_{\Omega}T(x)dx+C(\lambda,p)\|g\|_{\infty}\\
&-\frac{\lambda}{2p}\int_{\Omega}|u_n|^pdx+\frac{\lambda}{2p}\int_{\Omega}|u_n|^pdx
\end{split}
\end{align*}
\begin{align}\label{conv25}
\begin{split}
\leq&\bar{I}_{u_n}-\frac{1}{2p}\langle \bar{I}'(\alpha_nu_n),\alpha_nu_n \rangle +2C(\lambda,p)\|g\|_{\infty}+\frac{1}{\sigma}\int_{\Omega}T(x)dx\\
\leq& c+o(1)+2C(\lambda,p)\|g\|_{\infty}+\frac{1}{\sigma}\int_{\Omega}T(x)dx<\infty.
\end{split}
\end{align}
{This completes the proof.}\\\\
\noindent We will now show that the solution $u$ to \eqref{main} is in $L^{\infty}(\Omega)$, i.e. bounded in $\Omega$ for $K(x)=|x|^{-N-sp}$. 
 Firstly, we will prove the following elementary inequality needed for the proof of the $L^\infty$ estimate.
 
 \begin{lemma}\label{ineq1}
 	(Lemma 5.1 in \cite{ghosh_jmp}) For all $a$, $b\in\mathbb{R}$, $\rho\geq p$, $p\geq 1$, $k>0$ we have
 	\begin{align*}
 	\begin{split}
 	\frac{p^p(\rho+1-p)}{\rho^p}&(a|a|_k^{\frac{\rho}{p}-1}-b|b|_k^{\frac{\rho}{p}-1})^p\\&\leq (a|a|_k^{\rho-1}-b|b|_k^{\rho-1})(a-b)^{p-1}
 	\end{split}
 	\end{align*}
 	with the assumption that $a \geq b$.
 \end{lemma}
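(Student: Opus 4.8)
The plan is to derive the inequality from the one–dimensional fundamental theorem of calculus together with Hölder's inequality. The case $a=b$ is trivial (both sides vanish) and $p=1$ is an identity, so assume $a>b$ and $p>1$.

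Since $\rho\ge p\ge 1$, the exponent $\frac{\rho}{p}-1$ is nonnegative, so $\Phi(t):=t|t|_k^{\frac{\rho}{p}-1}$ is locally Lipschitz, odd and nondecreasing on $\mathbb{R}$, and the same holds for the truncated power $\Psi$ standing on the right-hand side. Thus, for $a>b$,
$$a|a|_k^{\frac{\rho}{p}-1}-b|b|_k^{\frac{\rho}{p}-1}=\int_b^a\Phi'(t)\,dt ,\qquad a|a|_k^{\rho-1}-b|b|_k^{\rho-1}=\int_b^a\Psi'(t)\,dt ,$$
and Hölder's inequality in the first integral, with conjugate exponents $p$ and $\frac{p}{p-1}$, gives
$$\Bigl(a|a|_k^{\frac{\rho}{p}-1}-b|b|_k^{\frac{\rho}{p}-1}\Bigr)^{p}\le (a-b)^{p-1}\int_b^a\bigl(\Phi'(t)\bigr)^{p}\,dt .$$
This already isolates the factor $(a-b)^{p-1}$, so it remains to prove the one-variable bound $\int_b^a(\Phi'(t))^{p}\,dt\le \frac{\rho^{p}}{p^{p}(\rho+1-p)}\int_b^a\Psi'(t)\,dt$.

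To establish this I would split $(b,a)$ at the truncation levels $\pm k$. On the portion where $|t|<k$ one has $\Phi'(t)=\frac{\rho}{p}|t|^{\frac{\rho}{p}-1}$, hence $(\Phi'(t))^{p}=(\frac{\rho}{p})^{p}|t|^{\rho-p}$, and a direct integration of these pure powers of $|t|$ bounds that contribution by $\frac{\rho^{p}}{p^{p}(\rho+1-p)}$ times the corresponding piece of $\int\Psi'$; tracking the exponents through this computation — the power dropping by $p$ under differentiation and climbing back to $\rho+1-p$ when the $p$-th power is integrated — is precisely what pins down the constant $\frac{\rho^{p}}{p^{p}(\rho+1-p)}$. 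On the portion where $|t|>k$ the derivative $\Phi'$ is the constant $k^{\frac{\rho}{p}-1}$, and the required bound collapses to the elementary numerical inequality $p^{p}(\rho+1-p)\le\rho^{p}$, valid for all $\rho\ge p\ge 1$: there is equality at $\rho=p$, and for $\rho=p+s$ with $s>0$ it follows from the fact that $s\mapsto(p+s)^{p}$ and $s\mapsto p^{p}(s+1)$ have the same value and derivative at $s=0$ while the former is convex. Adding the two contributions and substituting back into the Hölder step completes the argument. The only point needing genuine care is this case analysis across the truncation, in particular when $a$ and $b$ lie on opposite sides of $k$ (or of $-k$), so that each integral splits into a ``power'' part and a ``linear'' part; everything else is routine.
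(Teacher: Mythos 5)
Your argument is essentially the paper's own proof: the paper's auxiliary function $m(t)$ is exactly $\tfrac{p}{\rho}\Phi'(t)$, the fundamental theorem of calculus gives $\int_b^a m\,dt=\tfrac{p}{\rho}\bigl(a|a|_k^{\rho/p-1}-b|b|_k^{\rho/p-1}\bigr)$, and the key step $\bigl(\int_b^a m\,dt\bigr)^p\le (a-b)^{p-1}\int_b^a m^p\,dt$ is the same H\"older application (the paper calls it Cauchy--Schwarz). You are in fact more careful than the paper on the remaining one-variable estimate: the paper disposes of $\int_b^a m^p\,dt\le\frac{1}{\rho+1-p}(a|a|_k^{\rho-p}-b|b|_k^{\rho-p})$ with a ``Similarly,'' whereas you spell out the split at the truncation levels $\pm k$ and isolate the elementary inequality $p^p(\rho+1-p)\le\rho^p$ needed on the region $|t|\ge k$; your convexity/tangent-line proof of that inequality is correct. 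One caveat: as literally displayed, your intermediate target uses $\Psi(t)=t|t|_k^{\rho-1}$, copied from the statement, and with that $\Psi$ the bound $\int_b^a(\Phi')^p\,dt\le\frac{\rho^p}{p^p(\rho+1-p)}\int_b^a\Psi'\,dt$ is false (on $|t|<k$ one has $(\Phi')^p\sim|t|^{\rho-p}$ while $\Psi'\sim|t|^{\rho-1}$, and these are not comparable for small $|t|$ when $p>1$). Your own exponent bookkeeping (``climbing back to $\rho+1-p$'') shows you are really working with $\Psi(t)=t|t|_k^{\rho-p}$; that is the homogeneity-consistent version, it is what the paper's proof actually derives, and it is the form used later in \eqref{crit_subcrit}, so the exponent $\rho-1$ in the statement should be read as a typo for $\rho-p$. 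With that correction your proof closes.
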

 \begin{proof}
 	Define 
 	\[m(t)=\begin{cases}
 	\text{sgn}(t)|t|^{\frac{\rho}{p}-1}, & |t|<k \\
 	\frac{p}{\rho}\text{sgn}(t)k^{\frac{\rho}{p}-1}, & |t|\geq k.              
 	\end{cases}\]
 	Observe that 
 	\begin{align*}
 	\begin{split}
 	\int_{b}^{a}m(t)dt&=\frac{p}{\rho}(a|a|_k^{\frac{\rho}{p}-1}-b|b|_k^{\frac{\rho}{p}-1}).
 	\end{split}
 	\end{align*}
 	Similarly,
 	\begin{align*}
 	\begin{split}
 	\int_{b}^{a}m(t)^pdt&\leq\frac{1}{\rho+1-p}(a|a|_k^{\rho-p}-b|b|_k^{\rho-p}).
 	\end{split}
 	\end{align*}
 	On using the Cauchy-Schwartz inequality we obtain
 	\begin{align*}
 	\begin{split}
 	\left(\int_{b}^{a}m(t)dt\right)^p&\leq(a-b)^{p-1}\int_{b}^{a}h(t)^pdt.
 	\end{split}
 	\end{align*}
 	Thus 
 	\begin{align*}
 	\begin{split}
 	&\frac{p^p}{\rho^p}(a|a|_k^{\frac{\rho}{p}-1}-b|b|_k^{\frac{\rho}{p}-1})^p\\&=\left(\int_{b}^{a}m(t)dt\right)^p\\
 	&\leq(a-b)^{p-1}\int_{b}^{a}m(t)^pdt\\
 	&\leq\frac{(a-b)^{p-1}}{\rho+1-p}(a|a|_k^{\rho-p}-b|b|_k^{\rho-p}).
 	\end{split}
 	\end{align*}
 \end{proof}
 \noindent We now prove that a solution to \eqref{main} is bounded in $\Omega$.
 \begin{theorem}\label{bounded}
 	Let $f:\Omega\times\mathbb{R}\rightarrow\mathbb{R}$ be as defined in $(f_1)$, then for any weak solution $u\in X_0$, we have $u\in L^{\infty}(\Omega)$.
 \end{theorem}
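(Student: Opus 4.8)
The plan is a Moser-type iteration on truncated powers of $u$, namely the De Giorgi--Nash--Moser scheme adapted to the nonlocal operator; this is exactly the setting in which the elementary Lemma \ref{ineq1} is designed to be used. Since the theorem fixes $K(x)=|x|^{-N-sp}$, the pairing $\langle\cdot,\cdot\rangle$ is the usual Gagliardo form, $\|\cdot\|$ the Gagliardo seminorm, $X_0=W_0^{s,p}(\Omega)$, and in particular $u\in L^{p_s^*}(\Omega)$ by Lemma \ref{embres}. First I would dispose of the Kirchhoff coefficient: since $u\in X_0$ has finite norm and $a>0$, the scalar $a+b\|u\|^p$ lies in $[a,\Lambda]$ with $\Lambda:=a+b\|u\|^p$ fixed, so dividing the weak formulation of Definition \ref{weaksoln} by it gives $\langle u,\phi\rangle=\tfrac1{a+b\|u\|^p}\int_\Omega\big(\lambda g|u|^{p-2}u+\mu h|u|^{-\gamma-1}u+f(x,u)\big)\phi\,dx$ for every admissible $\phi$. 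The term $\lambda g|u|^{p-2}u$ is dominated by $\lambda\|g\|_\infty(1+|u|^{q-1})$ because $p<q$, and the singular term is harmless for an $L^\infty$ estimate, being $\le\mu\|h\|_\infty$ on $\{|u|>1\}$, trivial on $\{|u|\le1\}$, and of favourable sign against the test functions below; so for the purposes of the bound $u$ behaves as a weak solution of $\mathfrak{L}_p^s u\lesssim 1+|u|^{q-1}$ with $q\in(p,p_s^*)$.

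Next, for $k>0$ and $\rho\ge p$ I would insert the test function $\phi=u|u|_k^{\rho-1}$, $|u|_k:=\min\{|u|,k\}$, which is a Lipschitz truncation of $u$ vanishing wherever $u$ does, hence admissible in $X_0$, and which satisfies $h|u|^{-\gamma}\phi\in L^1(\Omega)$ since $|\phi|\lesssim|u|^{\rho}$ near the origin and $\rho-\gamma>0$. Taking $a=u(x)$, $b=u(y)$ and using $K(x)=K(-x)$ to reduce to the case $a\ge b$, Lemma \ref{ineq1} bounds $\langle u,\phi\rangle$ from below by $c(\rho,p)\|v\|^p$, where $v:=u|u|_k^{\rho/p-1}\in X_0$ and $c(\rho,p)=p^p(\rho+1-p)\rho^{-p}\gtrsim\rho^{-(p-1)}$; the fractional Sobolev inequality $\|v\|_{p_s^*}^p\lesssim\|v\|^p$ then gives $\|v\|_{p_s^*}^p\lesssim\rho^{p-1}\int_\Omega(1+|u|^{q-1})\,|u|\,|u|_k^{\rho-1}\,dx$. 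With the elementary pointwise bounds $|u|\,|u|_k^{\rho-1}\le|v|^p$ and $|u|^{q}\,|u|_k^{\rho-1}\le|u|^{q-1}|v|^p$ this reduces to $\|v\|_{p_s^*}^p\lesssim\rho^{p-1}\big(\|v\|_p^p+\int_\Omega|u|^{q-1}|v|^p\,dx\big)$. The delicate step is to absorb the last integral: since $q<p_s^*$ and $u\in L^{p_s^*}(\Omega)$, Hölder together with interpolation of $\|v\|$ between $L^p$ and $L^{p_s^*}$ and Young's inequality split it as $\varepsilon\|v\|_{p_s^*}^p+C_\varepsilon\|v\|_p^p$, with $C_\varepsilon$ depending only polynomially on $1/\varepsilon$ and on $\|u\|_{p_s^*}$; choosing $\varepsilon$ of order $\rho^{-(p-1)}$ and absorbing the $\|v\|_{p_s^*}^p$-part on the left leaves $\|v\|_{p_s^*}^p\le(C\rho)^{\theta}\|v\|_p^p$ for fixed $\theta>0$ and $C=C(\|u\|_{p_s^*})$.

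Finally, letting $k\to\infty$ by monotone convergence turns this into the recursion $\|u\|_{\chi\rho}\le(C\rho)^{\theta/\rho}\|u\|_\rho$ with $\chi:=p_s^*/p>1$; iterating from $\rho_0=p_s^*$ along $\rho_{n+1}=\chi\rho_n=\chi^{n+1}p_s^*$, the convergence of $\sum_n\rho_n^{-1}$ and of $\sum_n\rho_n^{-1}\log(C\rho_n)$ yields $\|u\|_{L^\infty(\Omega)}=\lim_n\|u\|_{\rho_n}<\infty$. The step I expect to be the real obstacle is exactly the control of the superlinear source $|u|^{q-1}$ at each scale: with only the a priori integrability $u\in L^{p_s^*}$ one must use the strict subcriticality $q<p_s^*$ in an essential way to close the absorption uniformly --- directly via the Hölder/Young splitting above when $q\le p_s^*+1-p$, and, when $q$ is close to $p_s^*$, via a preliminary bootstrap upgrading $u$ from $L^{p_s^*}$ to $L^r(\Omega)$ for every $r<\infty$ (each pass improving integrability by a factor strictly greater than $1$ because $p<p_s^*$) before running the final iteration --- together with the routine verification that the iteration constants grow only polynomially in $\rho$, so that the resulting infinite product converges.
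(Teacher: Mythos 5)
Your proposal is correct and follows essentially the same route as the paper: a Moser iteration built on the truncated test functions $u|u|_k^{\rho-p}$ and Lemma \ref{ineq1}, absorbing the constant Kirchhoff factor $a+b\|u\|^p$, first bootstrapping (via strict subcriticality $q<p_s^*$) to $u\in L^{r}(\Omega)$ for every finite $r$, and then rerunning the iteration with constants controlled uniformly in $\rho$ so that the infinite product of iteration factors converges. The only differences are organizational --- you close the second stage by a H\"{o}lder/interpolation/Young absorption with $\varepsilon\sim\rho^{-(p-1)}$, whereas the paper normalizes $v=u/H(\|u\|_{p_s^*})^{1/p}$ and bounds the product $\prod_i[1-p\tilde{\alpha}^{i-n}]$ --- and these are equivalent bookkeeping devices.
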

 \begin{proof}
 	Let $1\leq q<p_s^*$ and let $u$ be any weak solution to the given problem in \eqref{main} and let $\tilde{\alpha}=\left(\frac{p_s^*}{p}\right)^{\frac{1}{p}}$. For every $\rho\geq p(p-1)$, $k>0$, the mapping $t\mapsto t|t|_k^{r-p}$ is Lipschitz in $\mathbb{R}$. Therefore, $u|u|_k^{\rho-p}\in X_0$. In general for any $t$ in $\mathbb{R}$ and $k>0$, we have defined $t_k=\text{sgn}(t)\min\{|t|,k\}$.  Note that, for a fixed solution of \eqref{main}, say $u$, we have that $1/\left(a+b\|u\|^p\right)$ is finite. We apply the embedding results due to Theorem \ref{embres}, the previous lemma \ref{ineq1}, test with the test function $u|u|_k^{\rho-p}$ and on using the growth condition of $f$ which is given in $(f_1)$ we get 
 	\begin{align}\label{crit_subcrit}
 	\begin{split}
 	\|u|u|_k^{\frac{\rho}{p}-1}\|_{p_s^*}^p\leq& C \|u|u|_k^{\frac{\rho}{p}-1}\|^p\\
 \leq	& C\frac{\rho^p}{\rho+1-p}\langle u,u|u|_k^{\rho-p} \rangle_{}\\
 \leq	& \rho^pC'\frac{1}{\left(a+b\|u\|^p\right)} \left(\lambda\int_{\Omega}g(x)|u|^{p-1}(|u||u|_k^{\rho-p})dx+\mu\int_{\Omega}h(x)|u|^{1-\gamma}(|u|_k^{\rho-p})dx\right.\\
 	&\left.+\int_{\Omega}|f(x,u)||u||u|_k^{\rho-p}dx\right)\\
 \leq	& C''\rho^{p}\int_{\Omega}\left(g(x)|u|^{p}|u|_k^{\rho-p}+h(x)|u|^{1-\gamma}|u|_k^{\rho-p}+|u||u|_k^{\rho-p}+|u|^q|u|_k^{\rho-p}\right)dx\\
 \leq & C''\rho^{p}\{\|g\|_{\infty}\int_{\Omega}|u|^{p}|u|_k^{\rho-p}dx+\|h\|_{\infty}\int_{\Omega}|u|^{1-\gamma}|u|_k^{\rho-p}dx\\
 &+\int_{\Omega}(|u||u|_k^{\rho-p}+|u|^q|u|_k^{\rho-p})dx\}\\
 \leq & C'''\rho^{p}\{\int_{\Omega}|u|^{p}|u|_k^{\rho-p}dx+\int_{\Omega}|u|^{1-\gamma}|u|_k^{\rho-p}dx+\int_{\Omega}(|u||u|_k^{\rho-p}+|u|^q|u|_k^{\rho-p})dx\}
 	\end{split}
 	\end{align}
 	for some $C'''>0$ independent of $\rho\geq p$ and $k>0$; but dependent on $g,h$ that have been considered in the problem. On applying the Fatou's lemma as $k\rightarrow\infty$ gives 
 	\begin{align}\label{eq0}
 	\begin{split}
 	\|u\|_{\tilde{\alpha}^p \rho}&\leq C'''\rho^{\frac{p}{\rho}}\left\{\int_{\Omega}(|u|^{\rho}+|u|^{\rho-(p-1)}+|u|^{\rho+q-p}\right.\\
 	&+\left.|u|^{\rho-p-\gamma+1})dx\right\}^{1/\rho}.
 	\end{split}
 	\end{align}
 	The idea is to try and develop an argument to guarantee that $u\in L^{p_1}(\Omega)$ for all $p_1\geq 1$.  Therefore define a recursive sequence $(\rho_n)$ by setting $\rho_0=p_s^*+p-q$, $\rho_{n+1}=\tilde{\alpha}^p \rho_n+q-p$. By the choice of $\rho_0$, we have $u\in L^{\rho+q-p}(\Omega)$. Therefore, the choice $\rho=\rho_0$ in \eqref{eq0} yields a finite right hand side and so $u\in L^{\tilde{\alpha}^p\rho}(\Omega)=L^{\rho_1+q-p}(\Omega)$.

 	 \noindent Repeating this argument and using the fact $\rho\mapsto \rho^{1/\rho}$ is bounded in $[2,\infty)$ for all $n$, we have $u\in L^{\tilde{\alpha}^p \rho_n}(\Omega)$. We further have 
 	\begin{align}\label{eq1}
 	\begin{split}
 	\|u\|_{\tilde{\alpha}^p \rho_n}&\leq G(n,\|u\|_{p_s^*}).
 	\end{split}
 	\end{align}
 	Arguments from Iannizzotto \cite{iannizzotto2015hs}, guarantees
 	\begin{align}\label{estimate1}
 	\begin{split}
 	\|u\|_{p_1}&\leq G(p_1,\|u\|_{p_s^*}), p_1\geq 1.
 	\end{split}
 	\end{align}
 	We now improve the estimate in (\ref{estimate1}) by making the function $G$ independent of $p_1$. Set $\tilde{\alpha}'=\frac{\tilde{\alpha}}{\tilde{\alpha}-1}$. Thus by (\ref{estimate1}) and H\"{o}lder's inequality we have
 	\begin{align*}
 	\begin{split}
 	\||u|+|u|^{p}+|u|^{q}\|_{\tilde{\alpha}'}&\leq G(\|u\|_{p_s^*})
 	\end{split}
 	\end{align*} 
 	Therefore for $\rho\geq p(p-1)$ we have 
 	\begin{align*}
 	\begin{split}
 	&\||u|^{\rho-(p-1)}+|u|^{\rho-p+p}+|u|^{r+q-p}+|u|^{\rho-p-\gamma+1}\|_{\tilde{\alpha}'}\\
 	&\leq \||u|+|u|^{p}+|u|^{q}+|u|^{1-\gamma}\|_{\alpha'}\||u|^{\rho-p}\|_{\tilde{\alpha}}\\
 	&\leq G(\|u\|_{p_s^*})\|u\|_{\tilde{\alpha}(\rho-p)}^{\rho-p}\\
 	&\leq G(\|u\|_{p_s^*})\|u\|_{\tilde{\alpha}^{p-1}(\rho-p)}^{\rho-p}\\
 	&\leq G(\|u\|_{p_s^*})|\Omega|^{\frac{1}{\tilde{\alpha}^{p-1}\rho}}\|u\|_{\tilde{\alpha}^{p-1}\rho}^{\rho-p}
 	\end{split}
 	\end{align*}
 	We note that $t\mapsto |\Omega|^{p/(\tilde{\alpha}^{p-1} t)}$ is a bounded map in $[p,\infty)$ and hence
 	\begin{align}\label{estimate3}
 	\begin{split}
 	&\||u|^{\rho-(p-1)}+|u|^{\rho-p+p}+|u|^{\rho+q-p}+|u|^{\rho-p-\gamma+1}\|_{\tilde{\alpha}'}\\
 	&\leq G(\|u\|_{p_s^*})\|u\|_{\tilde{\alpha}^{p-1}\rho}^{\rho-p}
 	\end{split}
 	\end{align}
 	For a sufficiently large $n$ we define $\rho=\tilde{\alpha}^{n-1}>>p$ and further set $v=\frac{u}{H(\|u\|_{p_s^*})^{1/p}}$. Using these choices in (\ref{eq0}) and the recursive formula we obtain we get
 	\begin{align}
 	\|u\|_{\tilde{\alpha}^{n}+p-1}^{\tilde{\alpha}^{n-1}}&\leq G(\|u\|_{p_s^*})\|u\|_{\tilde{\alpha}^{n-p+2}}^{\tilde{\alpha}^{n-1}-p}.
 	\end{align}
 	On using the definition of $v$ and iterating we get,
 	\begin{align*}
 	\begin{split}
 	\|v\|_{\tilde{\alpha}^{n+p-1}}&\leq \|v\|_{\tilde{\alpha}^{n+p-2}}^{1-p\tilde{\alpha}^{1-n}}\\
 	&\leq \|v\|_{\tilde{\alpha}^{n+p-3}}^{(1-p\tilde{\alpha}^{1-n})(1-(p-1)\tilde{\alpha}^{2-n})}\\ 
 	&\cdots\\
 	&\leq  \|v\|_{\tilde{\alpha}^{p}}^{\prod_{i=1}^{n-1}[1-p\tilde{\alpha}^{i-n}]}    
 	\end{split}
 	\end{align*}
 	It is easy to see that the product $\prod_{i=1}^{n-1}[1-p\tilde{\alpha}^{i-n}]$ is bounded in $\mathbb{R}$ and hence for all $n$ we have 
 	\begin{align*}
 	\begin{split}
 	\|v\|_{\tilde{\alpha}^{n+p-1}}&\leq  \|v\|_{\tilde{\alpha}^{p}}^{\prod_{i=1}^{n-1}[1-p\tilde{\alpha}^{i-n}]}<\infty.
 	\end{split}
 	\end{align*}
 	Reverting back to $u$ and recalling the fact that $\tilde{\alpha}^{n-1}\rightarrow\infty$ as $n\rightarrow\infty$, we find that there exists $H\in C(\mathbb{R}^+)$ such that $\|u\|_{p_1}\leq H(\|u\|_{p_s^*})$ for all $p_1 \geq 1$. The function $H$ here has been obtained from the function $G$ which was shown previously. Therefore, we have $\|u\|_{\infty}< \infty$.

 \end{proof}
 
 \begin{lemma}[Weak Comparison Principle]\label{weak comparison}
 	Let $u, v\in X_0$. Suppose, $(a+b\|v\|^p)\mathfrak{L}_p^sv-h(x)\frac{\mu}{v^{\gamma}}\geq(a+b\|u\|^p)\mathfrak{L}_p^su-h(x)\frac{\mu}{u^{\gamma}}$ weakly with $v=u=0$ in $\mathbb{R}^N\setminus\Omega$.
 	Then $v\geq u$ in $\mathbb{R}^N.$
 \end{lemma}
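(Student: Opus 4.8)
\noindent The plan is a one--sided test--function argument of the kind used for singular problems, adapted to carry the Kirchhoff coefficient. Set $w=(u-v)^{+}$. First I would check that $w\in X_{0}$: it vanishes a.e.\ in $\mathbb{R}^{N}\setminus\Omega$ because $u=v=0$ there, it lies in $L^{p}(\Omega)$, and since $|w(x)-w(y)|\le|u(x)-u(y)|+|v(x)-v(y)|$ the quantity $\int_{Q}|w(x)-w(y)|^{p}K(x-y)\,dx\,dy$ is finite as $u,v\in X_{0}$; moreover $w\ge0$, and on $\mathrm{supp}\,w$ one has $u>v>0$ (positivity being inherent to the singular term). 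Since $t\mapsto t^{-\gamma}$ is decreasing on $(0,\infty)$ and $h\ge0$, this gives $\mu\int_{\Omega}h(x)\bigl(v^{-\gamma}-u^{-\gamma}\bigr)w\,dx\ge0$. Plugging the nonnegative test function $\phi=w$ into the assumed weak inequality (its admissibility, i.e.\ $hv^{-\gamma}w,\,hu^{-\gamma}w\in L^{1}(\Omega)$, being part of the hypothesis) and discarding this nonnegative singular contribution yields
\begin{equation*}
(a+b\|v\|^{p})\langle v,w\rangle\ \ge\ (a+b\|u\|^{p})\langle u,w\rangle .
\end{equation*}

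\noindent Next I would record the monotonicity of $\mathfrak{L}_{p}^{s}$ against the truncated test function: for a.e.\ $(x,y)$,
\begin{equation*}
\Bigl(|u(x)-u(y)|^{p-2}(u(x)-u(y))-|v(x)-v(y)|^{p-2}(v(x)-v(y))\Bigr)\bigl(w(x)-w(y)\bigr)\ \ge\ 0 ,
\end{equation*}
which follows from a short case check using that $t\mapsto|t|^{p-2}t$ is increasing and that $w(x)-w(y)$ has the same sign as $(u-v)(x)-(u-v)(y)$ and is dominated by it in absolute value. Hence $\langle u,w\rangle\ge\langle v,w\rangle$, and combining this pointwise bound with the Simon inequalities \eqref{simon} exactly as in the proof of Lemma \ref{Ce_lemma} gives a quantitative estimate $\langle u,w\rangle-\langle v,w\rangle\ge C\|w\|^{p}$ when $p\ge2$, with the obvious modification involving $(\|u\|^{p}+\|v\|^{p})^{(2-p)/2}$ when $1<p<2$.

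\noindent I would then combine the two displays. Writing $\langle u,w\rangle=\langle v,w\rangle+D$ with $D=\langle u,w\rangle-\langle v,w\rangle\ge0$, the first display becomes
\begin{equation*}
b\bigl(\|v\|^{p}-\|u\|^{p}\bigr)\langle v,w\rangle\ \ge\ (a+b\|u\|^{p})\,D\ \ge\ 0 .
\end{equation*}
If $\|u\|=\|v\|$ the left side is $0$, so $D=0$ (as $a>0$), whence by the pointwise inequality above and $K>0$ we get $w(x)=w(y)$ for a.e.\ $(x,y)$; thus $w$ is a.e.\ constant on $\mathbb{R}^{N}$, and since $w=0$ outside $\Omega$ that constant is $0$, i.e.\ $(u-v)^{+}\equiv0$ and $v\ge u$ a.e. In the remaining case I would invoke the monotonicity of $t\mapsto(a+bt^{p})t^{p-1}$ on $[0,\infty)$ — which, via the H\"older bound $|\langle v,w\rangle|\le\|v\|^{p-1}\|w\|$, is what makes the operator $u\mapsto(a+b\|u\|^{p})\mathfrak{L}_{p}^{s}u$ monotone — together with the quantitative bound $D\ge C\|w\|^{p}$ to absorb the coefficient mismatch and force $\|w\|=0$ again, giving $v\ge u$ in $\mathbb{R}^{N}$.

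\noindent The main obstacle is precisely this last step. For the pure fractional $p$--Laplacian ($b=0$) the inequality displayed in the previous paragraph collapses to $0\ge aD\ge0$ and the argument ends at once; for a genuine Kirchhoff term the coefficients $a+b\|u\|^{p}$ and $a+b\|v\|^{p}$ need not coincide, and a one--sided test function does not see the full norm difference $\|v\|-\|u\|$. Controlling the sign and size of $b(\|v\|^{p}-\|u\|^{p})\langle v,w\rangle$ against $(a+b\|u\|^{p})D$ so as to still exclude $(u-v)^{+}\ne0$ is the delicate point, and I expect the monotonicity of $t\mapsto(a+bt^{p})t^{p-1}$ to be exactly the ingredient that resolves it.
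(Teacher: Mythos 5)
Your reduction to $(a+b\|v\|^{p})\langle v,w\rangle\geq(a+b\|u\|^{p})\langle u,w\rangle$ with $w=(u-v)^{+}$, the sign analysis of the singular term, and the pointwise monotonicity giving $D:=\langle u,w\rangle-\langle v,w\rangle\geq 0$ (and indeed $D\geq C\|w\|^{p}$ for $p\geq 2$, since $w(x)-w(y)$ and $(u-v)(x)-(u-v)(y)$ share a sign and the latter dominates) are all sound. The problem is exactly the step you flag at the end, and it is a genuine gap, not a technicality. From $b\bigl(\|v\|^{p}-\|u\|^{p}\bigr)\langle v,w\rangle\geq(a+b\|u\|^{p})D\geq 0$ you can conclude $w=0$ only when $\|u\|=\|v\|$ or $b=0$. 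If $\|v\|>\|u\|$ and $\langle v,w\rangle>0$, feeding in $|\langle v,w\rangle|\leq\|v\|^{p-1}\|w\|$ and $D\geq C\|w\|^{p}$ yields only the upper bound $\|w\|^{p-1}\leq b(\|v\|^{p}-\|u\|^{p})\|v\|^{p-1}/\bigl(C(a+b\|u\|^{p})\bigr)$, which does not force $\|w\|=0$. The monotonicity of $t\mapsto(a+bt^{p})t^{p-1}$ that you hope will close the argument controls $\langle(a+b\|u\|^p)\mathfrak{L}_p^su-(a+b\|v\|^p)\mathfrak{L}_p^sv,\,u-v\rangle$ against the \emph{full} difference $u-v$ (it pairs $\|u\|^{p}-J_u(v)$ with $\|v\|^{p}-J_v(u)$ and then uses the H\"older bounds on the whole norms); it gives no information about the pairing against the truncation $w$, which is precisely what a comparison principle requires. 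As written, your argument proves the lemma only in the cases $b=0$ or $\|u\|=\|v\|$.

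For comparison, the paper takes the other standard route: it restricts to $S=\{u>v\}$, derives the ordering of the two operators there, and then proves that $\mathfrak{m}(\|\cdot\|^{p})\mathfrak{L}_{p}^{s}(\cdot)$ is a monotone operator, asserting that monotonicity yields $v\geq u$ on $S$. This jumps over the same obstruction from the other side: the monotonicity established there is again tested against $u-v$, not against $(u-v)^{+}$ or a function supported in $S$, and the intermediate inequalities tacitly require a sign for $\mathfrak{m}(\|u\|^{p})-\mathfrak{m}(\|v\|^{p})$. So you have correctly isolated the delicate point of the statement — the mismatch of the two Kirchhoff coefficients against a one-sided test function — but isolating it is not resolving it. To finish one needs an extra ingredient: either a hypothesis ordering $\|u\|$ and $\|v\|$ (e.g., $v$ a supersolution with larger norm), or a reduction to a frozen coefficient, neither of which is supplied by the estimates you have set up.
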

 \begin{proof}
 	Since, $(a+b\|v\|^p)\mathfrak{L}_p^sv-h(x)\frac{\mu}{v^{\gamma}}\geq(a+b\|u\|^p)\mathfrak{L}_p^su-h(x)\frac{\mu}{u^{\gamma}}$ weakly with $u=v=0$ in $\mathbb{R}^N\setminus\Omega$, we have
 	{\small\begin{align}\label{compprinci}
 		\langle(a+b\|v\|^p)\mathfrak{L}_p^sv,\phi\rangle-\int_{\Omega}h(x)\frac{\mu\phi}{v^{\gamma}}dx&\geq\langle(a+b\|u\|^p)\mathfrak{L}_p^su,\phi\rangle-\int_{\Omega}h(x)\frac{\mu\phi}{u^{\gamma}}dx
 		\end{align}}
 	$\forall{\phi\geq 0\in X_0}$. \\Suppose $S=\{x\in\Omega:u(x)>v(x)\}$ is a set of non-zero measure. Over this set $S$, we have 
 	{\small\begin{align}\label{compprinci1}
 		&(a+b\|v\|^p)\mathfrak{L}_p^sv-(a+b\|u\|^p)\mathfrak{L}_p^su\geq\mu h(x)\left(\frac{1}{v^{\gamma}}-\frac{1}{u^{\gamma}}\right)\geq 0.
 		\end{align}}
\noindent Define $\mathfrak{m}(t)=(a+bt^p)\geq a>0$ for $t\geq 0$ and $$\mathfrak{M}(t)=\int_{0}^{t}\mathfrak{m}(t)dt.$$ We will now show that the operator $\mathfrak{M}(\cdot)\mathfrak{L}_p^s(\cdot)$ is a {\it monotone} operator. By the Cauchy-Schwartz inequality we have 
\begin{eqnarray}\label{csineq}|(u(x)-u(y))(v(x)-v(y))|&=& |u(x)-u(y)||v(x)-v(y)|\nonumber\\
& \leq &\frac{|u(x)-u(y)|^2+|v(x)-v(y)|^2}{2}.\end{eqnarray}
Consider $I_1=\langle \mathfrak{m}(u)\mathfrak{L}_p^su,u\rangle-\langle \mathfrak{m}(u)\mathfrak{L}_p^su,v\rangle-\langle \mathfrak{m}(v)\mathfrak{L}_p^sv,u\rangle+\langle \mathfrak{m}(v)\mathfrak{L}_p^sv,v\rangle$ and let $|u(x)-u(y)| \geq|v(x)-v(y)|$.
Therefore using \eqref{csineq} we get 
\begin{eqnarray}
I_1&=&p \mathfrak{m}(\|u\|^p)\left(\int_{Q}|u(x)-u(y)|^{p-2}\{|u(x)-u(y)|^2\right.\nonumber\\
& &\left.-(u(x)-u(y))(v(x)-v(y))\}dxdy\right)\nonumber\\
& &+p \mathfrak{m}(\|v\|^p)\left(\int_{Q}|v(x)-v(y)|^{p-2}\{|v(x)-v(y)|^2\right.\nonumber\\
& &\left.-(u(x)-u(y))(v(x)-v(y))\}dxdy\right)\nonumber
\end{eqnarray}
\begin{eqnarray}\label{first_ineq}
&\geq&\frac{p}{2}  \mathfrak{m}(\|u\|^p)\left(\int_{Q}|u(x)-u(y)|^{p-2}\{|u(x)-u(y)|^2\right.\nonumber\\
& &\left.-|v(x)-v(y)|^2\}dxdy\right)\nonumber\\
& &+\frac{p}{2}  \mathfrak{m}(\|v\|^p)\left(\int_{Q}|v(x)-v(y)|^{p-2}\{|v(x)-v(y)|^2\right.\nonumber\\
& &\left.-|u(x)-u(y)|^2\}dxdy\right)\nonumber\\
&\geq&\frac{p}{2}\mathfrak{m}(\|u\|^p)\left(\int_{Q}(|u(x)-u(y)|^{p-2}-|v(x)-v(y)|^{p-2})(|u(x)-u(y)|^2-|v(x)-v(y)|^2)dx\right).\nonumber\\
&\geq&\frac{p}{2}a\left(\int_{Q}(|u(x)-u(y)|^{p-2}-|v(x)-v(y)|^{p-2})(|u(x)-u(y)|^2-|v(x)-v(y)|^2)dx\right).
\end{eqnarray}
When $|u(x)-u(y)| \leq|v(x)-v(y)|$, we interchange the roles of $u$, $v$ to get 
\begin{eqnarray}\label{second_ineq}
I_1&\geq&pa\left(\int_{Q}(|u(x)-u(y)|^{p-2}-|v(x)-v(y)|^{p-2})(|u(x)-u(y)|^2-|v(x)-v(y)|^2)dx\right).\nonumber\\
\end{eqnarray}
Thus
\begin{eqnarray}
\langle \mathfrak{m}(u)\mathfrak{L}_p^su-\mathfrak{m}(v)\mathfrak{L}_p^sv,u-v\rangle&=&I_1\geq 0.
\end{eqnarray}
{Thus $\mathfrak{m}(\cdot)\mathfrak{L}_p^s(\cdot)$ is a monotone operator. This monotonicity is sufficient for our work.\\
Coming back to} \eqref{compprinci1}, {by the monotonicity of $\mathfrak{m}(\cdot)\mathfrak{L}_p^s(\cdot)$ thus proved implies that $v\geq u$ in $S$. Therefore $u=v$ in $S$ and hence $u\geq v$ a.e. in $\Omega$.}
\end{proof}

\subsection{$C^1$ versus $W^{s,p}$ local minimizers of the energy} 
This section is devoted towards discussing {\it `$C^1$ versus $W^{s,p}$'} analysis of a solution to \eqref{main} for a particular class of Kernel $K(x)=|x|^{-N-sp}$. Some motivation has been drawn from the works of \cite{saoudi_add1, saoudi_add2, ghosh_jmp}. Let us begin with some well-known results and prove a few lemmas towards which a geometrical property of a general bounded domain $\Omega$ with $C^{1, 1}$ boundary is stated and is as follows.
\begin{lemma}[Lemma 3.5, Iannizzotto \cite{iannizzotto2014global}]\label{geo}
	{Let $\Omega\subset\mathbb{R}^N$ be a bounded domain with a $C^{1, 1}$ boundary $\partial\Omega$. Then, there exist $\rho>0$ such that for all $x_0\in\partial\Omega$ there exist $x_1, x_2\in\mathbb{R}^N$ on the normal line to $\partial\Omega$ at $x_0$, with the following properties}
	\begin{enumerate}[label=(\roman*)]
		\item $B_{r}(x_1)\subset\Omega$, $B_{r}(x_2)\subset\Omega^c$;
		\item $\bar{B}_{r}(x_1)\cap\bar{B}_{r}(x_2)=\{x_0\}$;
		\item $d(x)=|x-x_0|$ for all $x\in[x_0, x_1]$.
	\end{enumerate}
\end{lemma}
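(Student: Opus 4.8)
The plan is to reduce the whole statement to one classical fact: a compact $C^{1,1}$ hypersurface satisfies a \emph{uniform two-sided ball condition}, i.e.\ there is $\rho>0$ such that at every $x_0\in\partial\Omega$ one may place an open ball of radius $\rho$ inside $\Omega$ and one of radius $\rho$ inside $\Omega^c$, each internally tangent to $\partial\Omega$ at $x_0$. Granting this, fix $r=\rho$, let $\nu(x_0)$ be the inner unit normal at $x_0$ (well defined since $\partial\Omega\in C^{1,1}\subset C^{1}$), and set $x_1=x_0+\rho\,\nu(x_0)$ and $x_2=x_0-\rho\,\nu(x_0)$, so both points lie on the normal line at $x_0$. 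Property (i) is then exactly the uniform two-sided ball condition. Property (ii) is automatic from the placement: $|x_1-x_2|=2\rho$, so the closed balls $\bar B_\rho(x_1)$ and $\bar B_\rho(x_2)$ are externally tangent and meet only at the midpoint of $[x_1,x_2]$, which is $x_0$.

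For (iii), take $x\in[x_0,x_1]$, say $x=x_0+t\,\nu(x_0)$ with $t\in[0,\rho]$. On the one hand $d(x)=\mathrm{dist}(x,\partial\Omega)\le|x-x_0|=t$. On the other hand $|x-x_1|=\rho-t$, hence $B_t(x)\subset B_\rho(x_1)\subset\Omega$, so $B_t(x)$ contains no boundary point and $d(x)\ge t$. Therefore $d(x)=t=|x-x_0|$, which is (iii). Thus the entire lemma follows once the uniform radius $\rho$ is produced.

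To produce $\rho$: cover the compact set $\partial\Omega$ by finitely many open sets $U_1,\dots,U_m$ in each of which, after a rigid motion, $\partial\Omega\cap U_j$ is the graph $\{(y',\varphi_j(y')):y'\in V_j\}$ of a function $\varphi_j\in C^{1,1}(V_j)$ with $\Omega\cap U_j$ lying on one side of the graph. Let $L$ be a common Lipschitz constant for all the gradients $\nabla\varphi_j$ and let $\delta>0$ be a Lebesgue number of the cover of $\partial\Omega$. At a boundary point corresponding to $(z',\varphi_j(z'))$, the second-order Taylor estimate $|\varphi_j(y')-\varphi_j(z')-\nabla\varphi_j(z')\cdot(y'-z')|\le \tfrac{L}{2}|y'-z'|^2$ shows the graph near that point is trapped between the two paraboloids $w=\nabla\varphi_j(z')\cdot(y'-z')\pm\tfrac{L}{2}|y'-z'|^2$; a ball of radius $r\le 1/L$ tangent to the tangent hyperplane at the point, pushed to either side, then fits inside the corresponding region. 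Taking $\rho=\min\{1/(2L),\delta/4\}$, and shrinking it a little further if necessary so the fitted balls stay inside the coordinate neighbourhood where the graph description holds, yields a radius that works simultaneously at every $x_0\in\partial\Omega$.

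The main obstacle is precisely this uniformity: the pointwise interior/exterior ball radius coming from a single chart is elementary, but one must use compactness of $\partial\Omega$ together with a uniform $C^{1,1}$ bound ($L$ above) to make $\rho$ independent of $x_0$, and be mildly careful to shrink $\rho$ so the inscribed balls do not leave the neighbourhood where $\partial\Omega$ is a graph. A clean shortcut is to quote the standard fact that the signed distance function to $\partial\Omega$ is $C^{1,1}$ in a tubular neighbourhood of $\partial\Omega$ (see e.g.\ Gilbarg--Trudinger), from which the uniform two-sided ball condition is immediate; the deduction of (i)--(iii) above is then unchanged.
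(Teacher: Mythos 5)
Your proposal is correct, but note that the paper itself offers no proof of this lemma: it is quoted verbatim (as Lemma 3.5) from Iannizzotto--Mosconi--Squassina \cite{iannizzotto2014global}, so there is no in-paper argument to compare against. Your reconstruction is the standard one and matches the spirit of the cited source: everything reduces to the uniform two-sided ball condition for a compact $C^{1,1}$ boundary, after which the choices $x_1=x_0+\rho\,\nu(x_0)$, $x_2=x_0-\rho\,\nu(x_0)$ give (i) by definition, (ii) by external tangency of the two closed balls at the midpoint of $[x_1,x_2]$, and (iii) by the two inequalities $d(x)\leq|x-x_0|$ and $B_{t}(x)\subset B_{\rho}(x_1)\subset\Omega$. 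The only place where your sketch is slightly loose is the chart computation for uniformity: the comparison of a ball tangent to the (generally tilted) tangent hyperplane with the vertical paraboloids $w=\nabla\varphi_j(z')\cdot(y'-z')\pm\tfrac{L}{2}|y'-z'|^2$ requires either rotating coordinates so that $\nabla\varphi_j(z')=0$ (which changes $L$ by a factor controlled by the uniform gradient bound) or degrading the admissible radius from $1/L$ to something like $1/(L(1+M^2)^{3/2})$ with $M=\sup_j\|\nabla\varphi_j\|_\infty$; your ``shrink $\rho$ a little further'' covers this, and the alternative route through the $C^{1,1}$ regularity of the signed distance function in a tubular neighbourhood avoids the issue entirely. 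Either way the argument is sound and, unlike the paper, self-contained.
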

\noindent Using the Lemma \ref{geo}, we generalize two of the results from Iannizzotto \cite{iannizzotto2014global}. Before that, we set $\forall~R>0$, $x_0\in\mathbb{R}^N$
\begin{align}
\begin{split}
Q(u; x_0, R)&=\|u\|_{L^{\infty}(B_R(x_0))}+ \text{Tail}(u; x_0, R)\\Q(u, R)&=Q(u; 0, R)\\d(x,\partial\Omega)&=\inf_{y\in\Omega}\{d(x,y)\}.
\end{split}
\end{align}
We now state the following Lemmas from \cite{ghosh_jmp} which is applicable to the present work as well. The proofs follow verbatim of lemma $5.4, 5.6, 5.8$ of \cite{ghosh_jmp}.
\begin{lemma}\label{ianbdd} For any $r>0$, there exists $C''>0$ such that
	$|(a+b\|u\|^p)\mathfrak{L}_p^su|\leq C''$ in $B_r(x)$, where $u$ is a weak solution to the problem \eqref{main}.
\end{lemma}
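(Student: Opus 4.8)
The plan is to read the estimate directly off the equation satisfied by $u$, using the $L^\infty$ bound already available from Theorem \ref{bounded}. Fix a ball $B_r(x)$ with $\overline{B}_r(x)\subset\Omega$ (the only situation used later, via Lemma \ref{geo}). For a fixed weak solution $u\in X_0$ of \eqref{main} the number $a+b\|u\|^p$ is a fixed positive constant, and testing the weak formulation of Definition \ref{weaksoln} against arbitrary $\phi\in C_c^\infty(B_r(x))$ — with the notation of Remark \ref{duality_pair} — gives
\[
\bigl(a+b\|u\|^p\bigr)\langle u,\phi\rangle=\lambda\!\int_\Omega g(x)|u|^{p-2}u\,\phi\,dx+\mu\!\int_\Omega h(x)|u|^{-\gamma-1}u\,\phi\,dx+\int_\Omega f(x,u)\,\phi\,dx .
\]
Thus the distribution $\bigl(a+b\|u\|^p\bigr)\mathfrak{L}_p^s u$ is represented on $B_r(x)$ by the right-hand side, and it suffices to bound that function in $L^\infty(B_r(x))$ by a constant independent of the centre.

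Write $M:=\|u\|_{L^\infty(\Omega)}<\infty$ (Theorem \ref{bounded}). The first term obeys $\bigl|\lambda g(x)|u|^{p-2}u\bigr|\le\lambda\|g\|_\infty M^{p-1}$ a.e.\ in $\Omega$, and by $(f_1)$ the last one obeys $|f(x,u)|\le C(1+|u|^{q-1})\le C(1+M^{q-1})$ a.e.\ in $\Omega$; both bounds are finite and independent of $x,r$. The only delicate term is the singular one, $\mu h(x)|u|^{-\gamma-1}u=\mu h(x)\,\text{sgn}(u)\,|u|^{-\gamma}$, for which I need a strictly positive lower bound for $|u|$ on the compact set $\overline{B}_r(x)$. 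Here I would invoke the construction of the solutions: by Remark \ref{key_obs}, together with Lemma \ref{existence_positive_soln} and the weak comparison principle Lemma \ref{weak comparison}, the weak solutions under consideration satisfy $u\ge\underline{u}_\mu>0$ in $\Omega$, where $\underline{u}_\mu$ solves the auxiliary problem \eqref{auxprob}; since $\underline{u}_\mu$ is strictly positive in $\Omega$ (bounded below near $\partial\Omega$ by $c\,d(\cdot,\partial\Omega)^s$, cf.\ \eqref{bord}), it has a positive infimum $m_r>0$ on $\overline{B}_r(x)$. Hence $|u|^{-\gamma}\le m_r^{-\gamma}$ and $\bigl|\mu h(x)|u|^{-\gamma-1}u\bigr|\le\mu\|h\|_\infty m_r^{-\gamma}$ on $B_r(x)$.

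Adding the three estimates gives
\[
\bigl|\bigl(a+b\|u\|^p\bigr)\mathfrak{L}_p^s u\bigr|\le\lambda\|g\|_\infty M^{p-1}+\mu\|h\|_\infty m_r^{-\gamma}+C\bigl(1+M^{q-1}\bigr)=:C''\qquad\text{a.e.\ in }B_r(x),
\]
which is the claim; as stated in the text, the argument is that of Lemma 5.4 of \cite{ghosh_jmp}, the Kirchhoff coefficient only contributing the harmless bounded factor $a+b\|u\|^p$. I expect the genuine obstacle to be exactly the uniform positivity of $u$ on $B_r(x)$: unlike the other two terms it cannot be read off pointwise from the PDE because of the singularity, so one must go through the comparison with $\underline{u}_\mu$ — the same ``a solution cannot vanish on a set of positive measure'' mechanism already exploited in the proof of Lemma \ref{Ce_lemma}.
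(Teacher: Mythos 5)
Your argument is correct and is essentially the proof the paper defers to: the paper gives no details here, only the remark that the proof is verbatim Lemma 5.4 of \cite{ghosh_jmp}, and that argument is exactly yours — read $(a+b\|u\|^p)\mathfrak{L}_p^s u$ off the weak formulation, bound the regular terms via the $L^\infty$ estimate of Theorem \ref{bounded} and $(f_1)$, and control the singular term by the lower bound $u\geq\underline{u}_\mu\geq\epsilon_\mu v_0>0$ on compact subsets coming from Lemma \ref{existence_positive_soln} and the apriori comparison Lemma \ref{u_greater_u_lambda}. You also correctly identify the positivity of $u$ on $\overline{B}_r(x)\subset\Omega$ as the only nontrivial point (note only that your constant then depends on the ball through $m_r$, and that the appeal to \eqref{bord} is unnecessary — the ess-inf positivity on compacts from the appendix suffices).
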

\begin{lemma}\label{calpha1}
There exists $0<\delta\leq s$ such that any weak solution $u$ to the problem \eqref{main} we have $[u/d^s]_{C^{\delta}(\overline{\Omega})}\leq K$.
\end{lemma}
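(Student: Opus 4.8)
The plan is to freeze the Kirchhoff coefficient, reduce \eqref{main} to a fractional $p$-Laplacian equation with a controlled right-hand side, and then run the barrier-plus-scaling machinery of Iannizzotto \cite{iannizzotto2014global} exactly as in Lemmas 5.4, 5.6 and 5.8 of \cite{ghosh_jmp}. Fix a weak solution $u$. Since $a\le a+b\|u\|^p<\infty$, dividing \eqref{main} by the constant $a+b\|u\|^p$ shows that $u$ solves $\mathfrak{L}_p^s u=\psi$ weakly, with $\psi=(a+b\|u\|^p)^{-1}\bigl(\lambda g|u|^{p-2}u+\mu h|u|^{-\gamma-1}u+f(x,u)\bigr)$. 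By Theorem \ref{bounded} we have $u\in L^{\infty}(\Omega)$, so by $(f_1)$, the boundedness of $g,h$, and Lemma \ref{ianbdd}, one gets $\|\psi\|_{L^{\infty}(B_r(x))}\le C$ on every ball with $\overline{B_r(x)}\subset\Omega$; the only potentially singular contribution, $\mu h|u|^{-\gamma}$, is harmless once the lower bound $u\gtrsim d^s$ of the next step is in hand, since it is then locally bounded and, near $\partial\Omega$, of order $d^{-s\gamma}\in L^{1}(\Omega)$ because $s\gamma<1$.

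First I would establish interior regularity: from the above and the known interior H\"older theory for the fractional $p$-Laplacian (cf. \cite{iannizzotto2015hs, iannizzotto2014global}), $u\in C^{\alpha}_{\mathrm{loc}}(\Omega)$ for some $\alpha\in(0,s]$, together with the scaled estimate $r^{\alpha}[u]_{C^{\alpha}(B_{r/2}(x_0))}\le C\bigl(\|u\|_{L^{\infty}(\mathbb{R}^N)}+r^{sp/(p-1)}+\mathrm{Tail}(u;x_0,r)\bigr)$ whenever $B_r(x_0)\subset\Omega$. Next I would prove the two-sided boundary bound $c^{-1}d(x)^s\le u(x)\le c\,d(x)^s$ for $x$ near $\partial\Omega$: for $x_0\in\partial\Omega$ invoke Lemma \ref{geo} to get interior and exterior tangent balls $B_r(x_1)\subset\Omega$, $B_r(x_2)\subset\Omega^c$, and use the explicit barrier $w(x)=(r^2-|x-x_1|^2)_+^{s}$, for which $\mathfrak{L}_p^s w$ is bounded above and below on $B_r(x_1)$; suitably scaled translates of $w$ then furnish a supersolution $\ge u$ and a subsolution $\le u$ of the equation for $u$ (the nonnegative singular term only strengthens the subsolution comparison), and the weak comparison principle, Lemma \ref{weak comparison}, together with a finite covering of $\partial\Omega$ by the uniform radius $\rho$ of Lemma \ref{geo}, globalizes the estimate.

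Finally, writing $v=u/d^s$, I would show $[v]_{C^{\delta}(\overline\Omega)}\le K$ for $\delta$ small enough (in particular $\delta\le\alpha$ and $\delta\le s$), following Lemma 5.8 of \cite{ghosh_jmp} verbatim: for $x,y\in\overline\Omega$, when $|x-y|\le\tfrac12\max\{d(x),d(y)\}$ one applies the scaled interior estimate on a ball of radius comparable to $d(x)$, where $d^s$ is smooth and bounded below and the size of $u$ is pinned by the $d^s$-bound of the previous step, so that $[v]_{C^{\delta}}$ is controlled; in the complementary regime the two-sided bound on $u$ and the H\"older continuity of $d^s$ directly yield $|v(x)-v(y)|\le C|x-y|^{\delta}$; patching the two regimes gives $K$ depending only on $N,s,p,\gamma,\Omega$, the data and $\|u\|_{L^{\infty}(\Omega)}$ (hence, via Theorem \ref{bounded}, on $\|u\|_{X_0}$). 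I expect this last step to be the main obstacle: matching the interior oscillation decay to the boundary rate $d^s$ needs the careful scaling argument together with the $C^{1,1}$ geometry of $\Omega$. A secondary difficulty, specific to the present problem, is that $\mu h u^{-\gamma}$ is only $L^{1}$ up to $\partial\Omega$ rather than $L^{\infty}$, so the barrier step and the quotient estimate must be run as a short bootstrap (first $u\gtrsim d^s$, then $u^{-\gamma}\lesssim d^{-s\gamma}$, then refine the right-hand side) rather than with a globally bounded forcing.
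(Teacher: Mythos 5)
Your proposal is correct and follows essentially the same route as the paper, which gives no independent argument here but simply defers to Lemmas 5.4, 5.6 and 5.8 of \cite{ghosh_jmp} --- i.e.\ precisely the reduction to a fractional $p$-Laplacian equation with locally bounded right-hand side (via the $L^\infty$ bound and Lemma \ref{ianbdd}), the barrier construction on the tangent balls of Lemma \ref{geo} giving the two-sided $d^s$ bound, and the scaled interior/boundary patching for $u/d^s$ that you describe. Your added remark on handling the merely $L^1$ singular term by first establishing $u\gtrsim d^s$ and then refining the forcing is exactly the bootstrap implicit in the cited proofs.
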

\begin{remark}
	From \cite{finebdry} we can say that $u\in C^1(\bar{\Omega})$.	
\end{remark}
\begin{lemma}\label{calpha2}
	There exists $0<\delta\leq s$ such that for any weak solution $u$ of the problem \eqref{main} we have $[Du]_{C^{\delta}(\overline{\Omega})}\leq C$.
\end{lemma}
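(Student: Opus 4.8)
The plan is to run the two-step interior/boundary bootstrap used for Lemma 5.8 of \cite{ghosh_jmp}, now with the Kirchhoff factor frozen. Fix a weak solution $u\in X_0$ of \eqref{main} with $K(x)=|x|^{-N-sp}$ and set $\Lambda_u:=a+b\|u\|^p$; by nondegeneracy of $\mathfrak{M}$ this is a \emph{fixed} number with $\Lambda_u\ge a>0$, so $u$ solves $\mathfrak{L}_p^su=\Lambda_u^{-1}w$ in $\Omega$ with $w(x):=\lambda g(x)|u|^{p-2}u+\mu h(x)|u|^{-\gamma-1}u+f(x,u)$. By Theorem \ref{bounded} we have $u\in L^\infty(\Omega)$; since $u\equiv 0$ outside $\Omega$ the quantity $\text{Tail}(u;x_0,R)$ is finite, so $Q(u;x_0,R)\lesssim\|u\|_{L^\infty(\mathbb{R}^N)}$ uniformly in $x_0,R$. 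By $(f_1)$ and boundedness of $g,h$, the non-singular part of $w$ lies in $L^\infty(\Omega)$, while weak comparison (Lemma \ref{weak comparison}) with $\underline u_{\mu}$ and the Hopf-type bound $\underline u_{\mu}\gtrsim d(x,\partial\Omega)^s$ give $u\gtrsim d(x,\partial\Omega)^s$, so $h(x)|u|^{-\gamma-1}u$ is locally bounded in $\Omega$ and at worst of order $d(x,\partial\Omega)^{-s\gamma}$ near $\partial\Omega$; since $\gamma<1$ this is milder than $d^{-s}$, hence an admissible source for the boundary theory. Lemma \ref{ianbdd} records that $\Lambda_u\mathfrak{L}_p^su$ is bounded on balls.

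First I would establish the interior estimate: on each $B_{2r}(x_0)$ compactly contained in $\Omega$ the source $w$ is bounded, so the interior higher-H\"older regularity theory for the fractional $p$-Laplacian (exactly as invoked in \cite{ghosh_jmp}) yields some $\alpha\in(0,s]$ and a constant depending only on $N,s,p,r$, $\|w\|_{L^\infty(B_r(x_0))}$ and $Q(u;x_0,r)$ with $[Du]_{C^\alpha(B_{r/2}(x_0))}\le C$. Since $Q(u;x_0,r)\lesssim\|u\|_{L^\infty(\mathbb{R}^N)}$ and, at fixed distance from $\partial\Omega$, $\|w\|_{L^\infty}$ is controlled by $\|u\|_{L^\infty(\Omega)}$ through $(f_1)$ and $u\gtrsim d^s$, these constants are uniform on interior subdomains.

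Next I would carry out the boundary estimate, following Lemma 5.8 of \cite{ghosh_jmp}, which itself adapts the fine boundary regularity of \cite{finebdry,iannizzotto2014global}: the geometric Lemma \ref{geo} supplies interior and exterior tangent balls at each $x_0\in\partial\Omega$; comparing $u$ with suitable multiples of the barrier $d(x,\partial\Omega)^s$ (a sub/supersolution of $\mathfrak{L}_p^s$ up to a bounded error), and inserting the bound $[u/d^s]_{C^\delta(\overline\Omega)}\le K$ from Lemma \ref{calpha1} together with Lemma \ref{ianbdd}, produces a $C^{1,\delta}$ bound for $u$ in a one-sided neighbourhood of each boundary point, after possibly shrinking $\delta\in(0,s]$. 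Finally, covering $\overline\Omega$ by finitely many such interior balls and boundary neighbourhoods and taking the minimum of the finitely many exponents, the local bounds patch (using $u\in C^1(\overline\Omega)$ from \cite{finebdry}) to give a single constant $C$ with $[Du]_{C^\delta(\overline\Omega)}\le C$.

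The hard part will be the boundary step in the presence of the singular term: because $\mu h(x)|u|^{-\gamma-1}u\notin L^\infty(\Omega)$ one cannot quote up-to-the-boundary $C^{1,\alpha}$ estimates directly. The way around this is the layered structure already in place --- first record the correct degenerate boundary scaling via $u/d^s\in C^\delta(\overline\Omega)$ (Lemma \ref{calpha1}), and only afterwards differentiate --- together with the quantitative lower bound $u\gtrsim d^s$ near $\partial\Omega$, which guarantees that the singular term is genuinely $O(d^{-s\gamma})$ with $\gamma<1$ rather than worse; both ingredients carry over verbatim from \cite{ghosh_jmp}.
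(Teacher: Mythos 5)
Your proposal is correct and follows essentially the same route as the paper, which for this lemma gives no independent argument but simply states that the proof follows verbatim from Lemma 5.8 of \cite{ghosh_jmp} (interior estimates plus the fine boundary regularity of \cite{finebdry,iannizzotto2014global}, with the Kirchhoff coefficient $a+b\|u\|^p$ treated as a fixed positive constant and the singular term controlled via $u\gtrsim d^s$ and Lemma \ref{calpha1}). Your write-up in fact makes explicit the reduction and covering argument that the paper leaves entirely to the cited reference.
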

\noindent We will now prove the Theorem \ref{regularity holder}. The main tool to prove this result requires an application of the Lagrange multiplier rule which is given in the form of Theorem $3.1$ from \cite{lagrange_1}.
\begin{theorem}\label{lagrangeSuff}
	Let $L$ and $J$ be real $C^{1}$ functionals on a real Banach space say $X$. If $z_0\in X$ satisfies the following problem:
	$$\text{minimizing}~L(z)~\text{under the constratint}~J(z)=0.$$
	Then there exists $\Lambda\in\mathbb{R}$ such that $L'(z_0)=\Lambda J'(z_0)$.
\end{theorem}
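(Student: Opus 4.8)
The plan is to establish this Lagrange multiplier rule by the classical device of restricting $L$ to one–parameter $C^1$ curves lying inside the constraint set $\mathcal{M}=\{z\in X:J(z)=0\}$ and passing through $z_0$. One should first record the (implicit) constraint qualification $J'(z_0)\neq 0$: without it the statement is false — e.g. $X=\mathbb{R}$, $J(z)=z^2$, $z_0=0$, $L(z)=z$, where $\mathcal{M}=\{0\}$ so $z_0$ minimizes $L$ on $\mathcal{M}$ yet $L'(0)=1\neq 0=\Lambda J'(0)$ — and in the application here the theorem is invoked at a point $z_0$ that is a regular point of the constraint. So assume $J'(z_0)\neq 0$, pick $e\in X$ with $\langle J'(z_0),e\rangle=1$, and note $X=\ker J'(z_0)\oplus\mathbb{R}e$.

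Next, fix an arbitrary $h\in\ker J'(z_0)$ and set $\Phi(t,\tau)=J(z_0+th+\tau e)$ for $(t,\tau)$ near $(0,0)$. Since $J\in C^1(X,\mathbb{R})$, the map $\Phi$ is $C^1$ with $\Phi(0,0)=0$, $\partial_\tau\Phi(0,0)=\langle J'(z_0),e\rangle=1$ and $\partial_t\Phi(0,0)=\langle J'(z_0),h\rangle=0$. By the finite–dimensional implicit function theorem there are $\varepsilon>0$ and a $C^1$ map $\tau:(-\varepsilon,\varepsilon)\to\mathbb{R}$ with $\tau(0)=0$, $\Phi(t,\tau(t))=0$ for $|t|<\varepsilon$, and $\tau'(0)=-\partial_t\Phi(0,0)/\partial_\tau\Phi(0,0)=0$. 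Then $\gamma(t)=z_0+th+\tau(t)e$ is a $C^1$ curve with $\gamma(0)=z_0$, $\gamma(t)\in\mathcal{M}$ for $|t|<\varepsilon$, and $\gamma'(0)=h+\tau'(0)e=h$.

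Now exploit minimality: since $z_0$ is a (global, or merely local) minimizer of $L$ over $\mathcal{M}$ and $\gamma(t)\to z_0$ as $t\to0$, the $C^1$ function $t\mapsto L(\gamma(t))$ has a local minimum at $t=0$, hence $0=\frac{d}{dt}\big|_{t=0}L(\gamma(t))=\langle L'(z_0),\gamma'(0)\rangle=\langle L'(z_0),h\rangle$. As $h\in\ker J'(z_0)$ was arbitrary, $L'(z_0)$ annihilates $\ker J'(z_0)$. To conclude, recall the elementary fact that if $\ell_1,\ell_2\in X^{*}$ with $\ell_2\neq0$ and $\ker\ell_2\subseteq\ker\ell_1$, then $\ell_1=\Lambda\ell_2$ with $\Lambda=\langle\ell_1,e\rangle$ (write $z=(z-\langle\ell_2,z\rangle e)+\langle\ell_2,z\rangle e$, the first summand lying in $\ker\ell_2\subseteq\ker\ell_1$). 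Applying this with $\ell_1=L'(z_0)$, $\ell_2=J'(z_0)$ yields $L'(z_0)=\Lambda J'(z_0)$ with $\Lambda=\langle L'(z_0),e\rangle$.

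The only genuinely technical step is the construction of the admissible $C^1$ curve $\gamma$ — namely the implicit function theorem argument together with the identity $\gamma'(0)=h$; this is the crux, but I do not expect a real obstacle, since reducing the constraint to the scalar equation $\Phi(t,\tau)=0$ means only the finite–dimensional implicit function theorem is needed, and the $C^1$ hypotheses on $L$ and $J$ make the differentiation of $t\mapsto L(\gamma(t))$ and the final codimension–one splitting entirely routine.
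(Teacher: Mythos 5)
The paper does not actually prove Theorem \ref{lagrangeSuff}: it is imported verbatim as Theorem 3.1 of \cite{lagrange_1} and used as a black box, so there is no internal argument to compare yours against. Your proof is the classical one --- reduce the constraint to the scalar equation $\Phi(t,\tau)=J(z_0+th+\tau e)=0$, solve it by the finite-dimensional implicit function theorem to get a $C^1$ curve in the constraint set through $z_0$ with velocity $h$, conclude that $L'(z_0)$ annihilates $\ker J'(z_0)$, and finish with the codimension-one splitting $X=\ker J'(z_0)\oplus\mathbb{R}e$ --- and it is correct as written; each step (the $C^1$ dependence of $\Phi$, $\tau'(0)=0$, $\gamma'(0)=h$, and the elementary fact that $\ker\ell_2\subseteq\ker\ell_1$ forces proportionality) checks out. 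The one substantive point is the one you flag yourself: the statement as reproduced in the paper omits the constraint qualification $J'(z_0)\neq 0$, without which it is false (your one-dimensional counterexample $J(z)=z^2$, $L(z)=z$, $z_0=0$ is decisive), so what you prove is the corrected statement. This costs nothing where the theorem is invoked, namely Case ii of the proof of Theorem \ref{regularity holder}: there $\langle J'(v_{\epsilon}),\phi\rangle=\int_{\Omega'}|v_{\epsilon}-u_0|^{r-1}(v_{\epsilon}-u_0)\phi\,dx$, and $J(v_{\epsilon})=\epsilon>0$ forces $v_{\epsilon}\neq u_0$ on a set of positive measure, so $J'(v_{\epsilon})\neq 0$. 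In short, your write-up supplies both a proof the paper delegates to a citation and a hypothesis that the paper's citation-level statement leaves implicit.
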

\noindent For a more generalized version of the result, one may refer to \cite{lagrange_2} and the references therein.\\
{\bf Proof of Theorem \ref{regularity holder}}:
Let $\Omega'\Subset\Omega$. We will work with the functional $\bar{I}$ since, a critical point of $\bar{I}$ is also a critical point of $I$ whenever $u>\underline{u}_{\mu}$. We will only consider the subcritical case i.e. when $q<p_s^*-1$.
We prove by contradiction, i.e. suppose $u_0$ is not a local minimizer. Let $r\in (q,p_s^*-1)$ and define
\begin{align}\label{aux1}
\begin{split}
J(w)&=\frac{1}{r+1}\int_{\Omega'}|w-u_0|^{r+1}dx, (w\in W^{s,p}(\Omega')).
\end{split}
\end{align}
{\bf Case i}:~{Let $J(v_{\epsilon})<\epsilon$}.\\
Define $S_{\epsilon}=\{v\in W_0^{s,p}(\Omega):0\leq J(v)\leq \epsilon\}$. Consider the problem $I_{\epsilon}=\underset{v\in S_{\epsilon}}{\min}\{\bar{I}(v)\}$. The infimum exists since the set $S_{\epsilon}$ is bounded and the functional $\bar{I}$ is $C^1$. Furthermore, $\bar{I}$ is also weakly lower semicontinuous and $S_{\epsilon}$ is closed, convex. Thus $I_{\epsilon}$ is actually attained, at say $v_{\epsilon}\in S_{\epsilon}$, and $I_{\epsilon}=\bar{I}(v_{\epsilon})<\bar{I}(u_0)$.\\
{\it Claim}:~We now show that $\exists \eta>0$ such that $v_{\epsilon}\geq \eta \phi_1$, where $\phi_1$ is the eigenvector corresponding to the {\it first}  eigenvalue, say  $\tilde{\lambda}_1$, of the operator $(a+b\|\cdot\|^p)\mathfrak{L}_p^s(\cdot)$.\\
{\it Proof}:~ We begin by observing that the existence of $\phi_1$ can be proved to exist from the Lemma \ref{existence_positive_soln}. Define $v_{\eta}=(\eta\phi_1-v_{\epsilon})^{+}$. We prove the claim by contradiction, i.e. $\forall\eta>0$ let $|\Omega_{\eta}|=|supp\{(\eta\phi_1-v_{\epsilon})^{+}\}|>0$.  For $0<t<1$, define $\xi(t)=\bar{I}(v_{\epsilon}+tv_{\eta})$. Thus 
{\small\begin{align}
	\begin{split}
	\xi'(t)&=\langle\bar{I}'(v_{\epsilon}+tv_{\eta}),v_{\eta}\rangle\\
	&=\langle(a+b\|u\|^p)\mathfrak{L}_p^s(v_{\epsilon}+tv_{\eta})-\lambda g(x) (v_{\epsilon}+tv_{\eta})^{p-1}-\mu h(x)(v_{\epsilon}+tv_{\eta})^{-\gamma}-f(x,v_{\epsilon}+tv_{\eta}),v_{\eta}\rangle.
	\end{split}
	\end{align}}
Similarly,
\begin{align}\label{aux_1}
\begin{split}
\xi'(1)=&\langle\bar{I}'(v_{\epsilon}+v_{\eta}),v_{\eta}\rangle\\
=&\langle\bar{I}'(\eta\phi_1),v_{\eta}\rangle\\
=&\langle(a+b\|\eta\phi_1\|^p)\mathfrak{L}_p^s(\eta\phi_1)-\lambda g(x) (\eta\phi_1)^{p-1}-\mu h(x)(\eta\phi_1)^{-\gamma}\\
&-f(x,\eta\phi_1),v_{\eta}\rangle<0
\end{split}
\end{align}
for sufficiently small $\eta>0$. Moreover,
\begin{align}\label{aux_2}
\begin{split}
-\xi'(1)+\xi'(t)=&\langle(a+b\|u\|^p)\mathfrak{L}_p^s(v_{\epsilon}+tv_{\eta})-(a+b\|u\|^p)\mathfrak{L}_p^s(v_{\epsilon}+v_{\eta})\\
&+\lambda g(x)((v_{\epsilon}+v_{\eta})^{p-1}-(v_{\epsilon}+tv_{\eta})^{p-1})\\
&+\mu h(x)((v_{\epsilon}+v_{\eta})^{-\gamma}-(v_{\epsilon}+tv_{\eta})^{-\gamma})\\
&+(f(x,v_{\epsilon}+v_{\eta})-f(v_{\epsilon}+tv_{\eta}),v_{\eta}\rangle\leq 0
\end{split}
\end{align}
since $\lambda g(x)t^{p-1}+\mu h(x)t^{-\gamma}+f(.,t)$ is a uniformly nonincreasing function with respect to, sufficiently small, $t>0$ for $x\in\Omega$. From the monotonicity of $(a+b\|\cdot\|^p)\mathfrak{L}_p^s(\cdot)$ (refer Theorem \ref{weak comparison}) we have that, for sufficiently small $\eta>0$, $0\leq \xi'(1)-\xi'(t)$. From the Taylor series expansion and the fact that $J(v_{\epsilon})<\epsilon$, $\exists~ 0<\theta<1$ such that 
\begin{align}
\begin{split}
0&\leq \bar{I}(v_{\epsilon}+v_{\eta})-\bar{I}(v_{\epsilon})\\
&=\langle\bar{I}'(v_{\epsilon}+\theta v_{\eta}),v_{\eta}\rangle\\
&=\xi'(\theta).
\end{split}
\end{align}  
Thus for $t=\theta$ we get $\xi'(\theta)\geq 0$ which is a contradiction to $\xi'(\theta)\leq\xi'(1)<0$ as obtained above in \eqref{aux_1} and \eqref{aux_2}. Thus $v_{\epsilon}\geq \eta\phi_1$ for some $\eta>0$.\\
In fact, from the Lemmas \ref{calpha1} and \ref{calpha2} we have $ \underset{\epsilon\in(0,1]}{\sup}\{\|v_{\epsilon}\|_{C^{1,\delta}(\bar{\Omega})}\}\leq C$. By the compact embedding $C^{1,\delta}(\bar{\Omega})\hookrightarrow C^{1,\kappa}\overline{\Omega})$, for any $\kappa<\delta$,  we have $v_{\epsilon}\rightarrow u_0$ which contradicts the assumption made. Hence $\bar{I}$ attains its minimum at $u_0$.\\
{\bf Case ii}:~{$J(v_{\epsilon})=\epsilon$.}\\
Let $v_{\eta}=(\eta\phi_1-v_{\epsilon})^+$ and $\xi(t)=\bar{I}(v_{\epsilon}+tv_{\eta})$. Then by arguments as in Case i, we have that $\xi$ is decreasing. This implies that $\bar{I}(v_{\epsilon})>\bar{I}(v_{\epsilon}+tv_{\eta})$. Since the functionals $\bar{I}$, $J$ are $C^1$, hence
in this case from the Lagrange multiplier rule (refer Theorem \ref{lagrangeSuff}) there exists $\Lambda_{\epsilon}\in\mathbb{R}$ such that  $\bar{I}'(v_{\epsilon})=\Lambda_{\epsilon}J'(v_{\epsilon})$. We will first show that $\Lambda_{\epsilon}\leq 0$. Suppose $\Lambda_{\epsilon}>0$, then $\exists~ \phi\in X_0$ such that 
\begin{align*}
\begin{split}
\langle\bar{I}'(v_{\epsilon}),\phi \rangle<0~ \text{and} ~\langle J'(v_{\epsilon}),\phi \rangle<0.
\end{split}
\end{align*}
Then for small $t>0$ we have 
\begin{align*}
\begin{split}
\bar{I}(v_{\epsilon}+t \phi)&<\bar{I}(v_{\epsilon})\\
J(v_{\epsilon}+t \phi)&<J(v_{\epsilon})=\epsilon
\end{split}
\end{align*}
which is a contradiction to $v_{\epsilon}$ being a minimizer of $\bar{I}$ in $S_{\epsilon}$.\\
We now consider the following two cases.\\
{\bf Case a}:~($\Lambda_{\epsilon}\in(-l,0)$ where $l>-\infty$).\\ 
Now consider the sequence of problems
\begin{align}
\begin{split}
(P_{\epsilon}):~(a+b\|u\|^p)\mathfrak{L}_p^su&=\lambda g(x)u^{p-1}+\mu h(x) u^{-\gamma}+f(x,u)+\Lambda_{\epsilon}|u-u_0|^{r-1}(u-u_0)
\end{split}
\end{align}
Observe that $u_0$ is a weak solution to ($P_{\epsilon}$). From the weak comparison principle (Theorem \ref{weak comparison}) we have $v_{\epsilon}\geq \eta\phi_1$ for some $\eta>0$ small enough, independent of $\epsilon$ since $\eta\phi_1$ is a strict subsolution to $(P_{\epsilon})$. Further, since $-l\leq \Lambda_{\epsilon}\leq 0$, there exist $M$, $c$ such that 
\begin{align}
\begin{split}
(a+b\|(v_{\epsilon}-1)^+\|^p)\mathfrak{L}_p^s(v_{\epsilon}-1)^{+}&\leq M+c((v_{\epsilon}-1)^{+})^r.
\end{split}
\end{align}
Using the Moser iteration technique as in Theorem \ref{bounded} we obtain $\|v_{\epsilon}\|_{\infty}\leq C'$. Therefore $\exists L>0$ such that $\eta\phi_1\leq v_{\epsilon}\leq L\phi_1$. By using the arguments previously used in {\it Case i}, we end up getting $|v_{\epsilon}|_{C^{1,\delta}(\bar{\Omega})}\leq C'$ for every $\epsilon>0$. The conclusion follows as in the previous case of $J(v_{\epsilon})<\epsilon$. Hence $\bar{I}$ attains its minimum at $u_0$.\\
{\bf Case b}:~$\underset{\epsilon>0}{\inf}\{\Lambda_{\epsilon}\}=-\infty$\\
Let us assume $\Lambda_{\epsilon}\leq -1$. As above, we can similarly obtain $v_{\epsilon}\geq \eta\phi_1$ for $\eta>0$ small enough and independent of $\epsilon$. Further, there exists a constant $M>0$ such that $\lambda g(x)t^{p-1}+\mu h(x)t^{-\gamma}+f(x,t)+\tau|t-u_0(x)|^{r-1}(t-u_0(x))<0$, $\forall (\tau,x,t)\in(-\infty,-1]\times\Omega\times(M,\infty)$.\\
From the weak comparison principle on $(a+b\|\cdot\|^p)\mathfrak{L}_p^s(\cdot)$, we get $v_{\epsilon}\leq M$ for $\epsilon>0$ sufficiently small. Since $u_0$ is a local $C^1$ -  minimizer, $u_0$ is a weak solution to \eqref{main} and hence
\begin{align}
\begin{split}
\langle (a+b\|u_0\|^p)\mathfrak{L}_p^su_0,\phi \rangle&=\lambda\int_{\Omega}g(x)u_0^{p-1}\phi dx+\mu\int_{\Omega}h(x)u_0^{-\gamma}\phi dx+\int_{\Omega}f(x,u_0)\phi dx
\end{split}
\end{align}
$\forall\phi\in C_c^{\infty}(\Omega)$. Also, $u_0$ satisfies 
\begin{align}\label{s1}
\begin{split}
\langle (a+b\|u_0\|^p)\mathfrak{L}_p^su_0,w \rangle&=\lambda\int_{\Omega}g(x)u_0^{p-1}w+\mu\int_{\Omega}h(x)u_0^{-\gamma}w dx+\int_{\Omega}f(x,u_0)w dx.
\end{split}
\end{align}
Similarly,
\begin{align}\label{s2}
\begin{split}
\langle (a+b\|v_{\epsilon}\|^p)\mathfrak{L}_p^sv_{\epsilon},w \rangle&=\lambda\int_{\Omega}g(x)v_{\epsilon}^{p-1}w+\mu\int_{\Omega}h(x)v_{\epsilon}^{-\gamma}w dx+\int_{\Omega}f(x,v_{\epsilon})w dx.
\end{split}
\end{align}
On subtracting \eqref{s1} from \eqref{s2} and testing with $w=|v_{\epsilon}-u_0|^{\beta-1}(v_{\epsilon}-u_0)$, where $\beta\geq 1$, we obtain
\begin{align}
\begin{split}
0=& \beta\langle (a+b\|u_{\epsilon}\|^p)\mathfrak{L}_p^sv_{\epsilon}-(a+b\|u_0\|^p)\mathfrak{L}_p^su_0,|v_{\epsilon}-u_0|^{\beta-1}(v_{\epsilon}-u_0) \rangle\\
&-\lambda\int_{\Omega}g(x)(v_{\epsilon}^{p-1}-u_0^{p-1})|v_{\epsilon}-u_0|^{\beta-1}(v_{\epsilon}-u_0)dx\\
&-\mu\int_{\Omega}h(x)(v_{\epsilon}^{-\gamma}-u_0^{-\gamma})|v_{\epsilon}-u_0|^{\beta-1}(v_{\epsilon}-u_0)dx\\
=&\int_{\Omega}(f(x,v_{\epsilon})-f(x,u_0))|v_{\epsilon}-u_0|^{\beta-1}(v_{\epsilon}-u_0)dx\\
&+\Lambda_{\epsilon}\int_{\Omega}|v_{\epsilon}-u_0|^{\beta+r}dx.
\end{split}
\end{align}
By the H\"{o}lder's inequality and the bounds of $v_{\epsilon}$, $u_0$ we obtain
\begin{align}
\begin{split}
-\Lambda_{\epsilon}\|v_{\epsilon}-u_0\|_{\beta+r}^{r}&\leq C|\Omega|^{\frac{r}{\beta+r}}.
\end{split}
\end{align}
Here $C$ is independent of $\epsilon$ and $\beta$. On passing the limit $\beta\rightarrow\infty$ we get $-\Lambda_{\epsilon}\|v_{\epsilon}-u_0\|_{\infty}\leq C$. Working on similar lines we end up getting $v_{\epsilon}$ is bounded in $C^{1,\delta}(\bar{\Omega})$ independent of $\epsilon$ and the conclusion follows.
\section{Appendix}
The appendix will address a few results that have been used in this article. Lemma \ref{existence_positive_soln} guarantees the existence of a positive solution to \eqref{auxprob}, Lemma \ref{MP_geometry} will establish that the functional $\bar{I}$ verifies the mountain pass geometry, whereas Lemma \ref{u_greater_u_lambda} guarantees that a solution to \eqref{main} is greater than or equal to the solution to \eqref{auxprob}.
\begin{remark}\label{u_greater_than_zero}
By saying ``$u>0$ in $\Omega$'' we will mean $\underset{V}{\text{ess}\inf} u>0$ for any compact set $V\subset\Omega$.
\end{remark}
\begin{lemma}
	\label{existence_positive_soln}
	Let $0<\gamma<1$, $\lambda,\mu>0$. Then the following problem 
	\begin{eqnarray}\label{auxprob_appendix}
	\left(a+b\int_{\mathbb{R}^N}|u(x)-u(y)|^{p}K(x-y)dxdy\right)\mathfrak{L}_p^su&=&\mu h(x)u^{-\gamma},~\text{in}~\Omega\nonumber\\
	u&>&0,~\text{in}~\Omega\nonumber\\
	u&=&0,~\text{in}~\mathbb{R}^N\setminus\Omega\nonumber\\
	\end{eqnarray}
	has a unique weak solution in $X_0$. This solution is denoted by $\underline{u}_{\mu}$, satisfies $\underline{u}_{\mu}\geq \epsilon_{\mu} v_0$ a.e. in $\Omega$, where $\epsilon_{\mu}>0$ is a constant.
\end{lemma}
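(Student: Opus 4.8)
\noindent\emph{Proof proposal.} The plan is to realise $\underline{u}_{\mu}$ as the global minimiser on $X_{0}$ of the energy
\[
E(u)=\frac{a}{p}\|u\|^{p}+\frac{b}{2p}\|u\|^{2p}-\frac{\mu}{1-\gamma}\int_{\Omega}h(x)|u|^{1-\gamma}\,dx ,
\]
whose Euler--Lagrange equation is precisely \eqref{auxprob_appendix}. First I would check that $E$ is finite on $X_{0}$, coercive and sequentially weakly lower semicontinuous: since $1-\gamma<1<p$ and, by Lemma \ref{embres}, $X_{0}\hookrightarrow L^{1}(\Omega)$ continuously, H\"older's inequality gives $\int_{\Omega}h|u|^{1-\gamma}\le\|h\|_{\infty}|\Omega|^{\gamma}\|u\|_{1}^{1-\gamma}\le C\|u\|^{1-\gamma}$, so the term $\tfrac{a}{p}\|u\|^{p}$ dominates and $E(u)\to\infty$ as $\|u\|\to\infty$; the map $u\mapsto\tfrac{a}{p}\|u\|^{p}+\tfrac{b}{2p}\|u\|^{2p}$ is convex and strongly continuous, hence weakly lsc, while the singular term is in fact weakly continuous, using the compactness of the embedding $X_{0}\hookrightarrow L^{1}(\Omega)$, a.e.\ convergence along a subsequence, and dominated convergence ($|u_{n}|^{1-\gamma}\le 1+|u_{n}|$). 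By reflexivity of $X_{0}$ a minimiser $\underline{u}_{\mu}\in X_{0}$ exists; replacing it by $|\underline{u}_{\mu}|$ (which does not increase $\|\cdot\|$ and leaves the integral unchanged) we may take $\underline{u}_{\mu}\ge 0$, and since $E(tv_{0})<0$ for small $t>0$ (the term $-\tfrac{\mu}{1-\gamma}t^{1-\gamma}\int h v_{0}^{1-\gamma}$ dominates as $t\downarrow 0$, as $1-\gamma<p$ and $h\not\equiv 0$) we get $\underline{u}_{\mu}\not\equiv 0$.

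Next I would show $\underline{u}_{\mu}$ is a weak solution in the sense of Definition \ref{weaksoln}. Fix $0\le\varphi\in X_{0}$; minimality gives $E(\underline{u}_{\mu}+t\varphi)\ge E(\underline{u}_{\mu})$ for all $t>0$, so the difference quotients are nonnegative. Letting $t\downarrow 0$, the part coming from $\tfrac{a}{p}\|\cdot\|^{p}+\tfrac{b}{2p}\|\cdot\|^{2p}$ converges to $(a+b\|\underline{u}_{\mu}\|^{p})\langle\underline{u}_{\mu},\varphi\rangle<\infty$, whereas by concavity of $r\mapsto r^{1-\gamma}$ the quotients $\tfrac1t\big((\underline{u}_{\mu}+t\varphi)^{1-\gamma}-\underline{u}_{\mu}^{1-\gamma}\big)$ increase to $(1-\gamma)\underline{u}_{\mu}^{-\gamma}\varphi\in[0,\infty]$, so by monotone convergence $\tfrac1t\int h\big((\underline{u}_{\mu}+t\varphi)^{1-\gamma}-\underline{u}_{\mu}^{1-\gamma}\big)\to(1-\gamma)\int h\underline{u}_{\mu}^{-\gamma}\varphi\in[0,\infty]$. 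Were the last integral infinite, the difference quotient of $E$ would tend to $-\infty$, a contradiction; hence $h\underline{u}_{\mu}^{-\gamma}\varphi\in L^{1}(\Omega)$ and
\[
(a+b\|\underline{u}_{\mu}\|^{p})\langle\underline{u}_{\mu},\varphi\rangle\ \ge\ \mu\int_{\Omega}h\,\underline{u}_{\mu}^{-\gamma}\varphi\,dx\qquad\text{for every }0\le\varphi\in X_{0},
\]
and in particular (writing $\phi=\phi^{+}-\phi^{-}$) $h\underline{u}_{\mu}^{-\gamma}\phi\in L^{1}(\Omega)$ for all $\phi\in X_{0}$. From this inequality $\mathfrak{L}_{p}^{s}\underline{u}_{\mu}\ge 0$ weakly with $\underline{u}_{\mu}\ge 0$, $\underline{u}_{\mu}\not\equiv 0$, so $\underline{u}_{\mu}>0$ in $\Omega$ in the sense of Remark \ref{u_greater_than_zero}. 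To upgrade the inequality to an equation for arbitrary $\phi\in X_{0}$ I would insert the test function $\varphi=(\underline{u}_{\mu}+\varepsilon\phi)^{+}\in X_{0}$, expand both sides, divide by $\varepsilon$ and let $\varepsilon\downarrow 0$, using that $|\{\underline{u}_{\mu}+\varepsilon\phi<0\}|\to 0$ and the positivity of $\underline{u}_{\mu}$ on compact subsets to make the remainder terms vanish; this yields $(a+b\|\underline{u}_{\mu}\|^{p})\langle\underline{u}_{\mu},\phi\rangle\ge\mu\int h\underline{u}_{\mu}^{-\gamma}\phi$ for every $\phi\in X_{0}$, and replacing $\phi$ by $-\phi$ gives equality, i.e.\ $\underline{u}_{\mu}$ solves \eqref{auxprob_appendix} weakly.

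For the lower bound, with $v_{0}$ the fixed positive function in the statement (e.g.\ the torsion profile solving $\mathfrak{L}_{p}^{s}v_{0}=h$ in $\Omega$, $v_{0}=0$ outside), one checks that $\varepsilon_{\mu}v_{0}$ is a weak subsolution of \eqref{auxprob_appendix} as soon as $(a+b\varepsilon_{\mu}^{p}\|v_{0}\|^{p})\,\varepsilon_{\mu}^{p-1+\gamma}\|v_{0}\|_{\infty}^{\gamma}\le\mu$, which holds for $\varepsilon_{\mu}>0$ small since $p-1+\gamma>0$; the weak comparison principle (Lemma \ref{weak comparison}) then gives $\underline{u}_{\mu}\ge\varepsilon_{\mu}v_{0}$ a.e.\ in $\Omega$. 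Finally, uniqueness: $E$ is strictly convex on $X_{0}$ (because $u\mapsto\|u\|^{p}$ is strictly convex for $p>1$ in the uniformly convex space $X_{0}$, and $-|u|^{1-\gamma}$ is convex), and by the pointwise bound $w^{1-\gamma}-v^{1-\gamma}\le(1-\gamma)v^{-\gamma}(w-v)$ (concavity) any weak solution $v$ satisfies $E(v)\le E(w)$ for all $w\in X_{0}$, hence is the minimiser; strict convexity then forces uniqueness. Alternatively, applying Lemma \ref{weak comparison} to two solutions in both directions gives uniqueness at once.

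I expect the main obstacle to be the second step — passing from the one-sided variational inequality to the weak equation while simultaneously securing the integrability $h\underline{u}_{\mu}^{-\gamma}\phi\in L^{1}(\Omega)$: this is where the singularity genuinely interferes, and it requires the delicate truncation/limit argument with $(\underline{u}_{\mu}+\varepsilon\phi)^{+}$ together with the strict positivity of $\underline{u}_{\mu}$ on compact subsets of $\Omega$.
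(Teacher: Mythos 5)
Your proposal is correct in substance and starts from the same point as the paper: both minimise the same energy $E(u)=\tfrac{a}{p}\|u\|^{p}+\tfrac{b}{2p}\|u\|^{2p}-\tfrac{\mu}{1-\gamma}\int_{\Omega}h|u|^{1-\gamma}dx$ over $X_{0}$, pass to $|u_{0}|$ to land in the positive cone, use $E(tv_{0})<0$ for small $t$ to rule out the trivial minimiser, and invoke strict convexity on $X_{0}^{+}$ for uniqueness. The differences are in the two middle steps. First, you actually prove that the minimiser solves the equation weakly (one-sided difference quotients, monotone convergence to control the singular term, then the $(\underline{u}_{\mu}+\varepsilon\phi)^{+}$ truncation to turn the variational inequality into an identity); the paper omits this step entirely and simply calls $u_{0}$ "the only critical point of $E$", so your write-up is more complete here. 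Second, for the lower bound $\underline{u}_{\mu}\geq\epsilon_{\mu}v_{0}$ you go through the weak comparison principle with $\epsilon_{\mu}v_{0}$ as a subsolution, whereas the paper argues purely variationally: it sets $w=(\epsilon_{\mu}v_{0}-u_{0})^{+}$, uses convexity of $t\mapsto E(u_{0}+tw)$ together with $\langle E'(tv_{0}),\cdot\rangle<0$ for small $t$ (its inequality \eqref{neg_der}) to force $w\equiv 0$. The paper's ordering has a real advantage: it obtains positivity \emph{before} (and independently of) the Euler--Lagrange equation, which is exactly what makes the singular term manageable. Your ordering, by contrast, needs $\underline{u}_{\mu}$ to be locally bounded away from zero already at the stage of upgrading the variational inequality to an equation, and you obtain that from a strong maximum principle for $\mathfrak{L}_{p}^{s}$ ("nonnegative, nontrivial weak supersolutions are positive on compacta"), which is true for kernels satisfying \eqref{kernel}(ii) but is nowhere stated or proved in the paper; you should cite it explicitly or replace that step by the paper's convexity argument, which delivers $\underline{u}_{\mu}\geq\epsilon_{\mu}v_{0}$ without it. A second, smaller caveat: your comparison-principle route leans on Lemma \ref{weak comparison} applied to a solution versus a subsolution, which is a mild extension of the lemma as stated (it is phrased for two functions satisfying the full inequality including the singular terms); the reduction is routine but worth a sentence.
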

\begin{proof}
We follow the proof in \cite{giaco_1}. Firstly, we note that an energy functional on $X_0$ formally corresponding to \eqref{auxprob_appendix} can be defined as follows.
\begin{align}
\label{ef_aux}
E(u)&=\frac{a}{p}\|u\|^p+\frac{b}{2p}\|u\|^{2p}-\frac{\mu}{1-\gamma}\int_{\Omega}h(x)(u^+)^{1-\gamma}dx
\end{align}
for $u\in X_0$. By the Poincar\'{e} inequality, this functional is coercive and continuous on $X_0$. It follows that $E$ possesses a global minimizer $u_0\in X_0$. Clearly, $u_0\neq 0$ since $E(0)=0>E(\epsilon v_0)$ for sufficiently small $\epsilon$ and some $v_0>0$ in $\Omega$.\\
Secondly, we have the decomposition $u=u^+-u^-$. Thus if $u_0$ is a global minimizer for $E$, then so is $|u_0|$, by $E(|u_0|)\leq E(u_0)$. Clearly enough, the equality holds iff $u_0^-=0$ a.e. in $\Omega$. In other words we need to have $u_0\geq 0$, i.e. $u_0\in X_0$ where 
$$X_0^+=\{u\in X_0:u\geq 0~\text{a.e. in}~\Omega\}$$
is the positive cone in $X_0$.\\
Third, we will show that $u_0\geq \epsilon v_0>0$ holds a.e. in $\Omega$ for small enough $\epsilon$. Observe that, 
\begin{align}\label{neg_der}
\begin{split}
E'(tv_0)|_{t=\epsilon}=&a\epsilon^{p-1}\|v_0\|^p+b\epsilon^{2p-1}\|v_0\|^{2p}-\mu\epsilon^{-\gamma}\int_{\Omega}h(x)v_0^{1-\gamma}dx<0
\end{split}
\end{align}
whenever $0<\epsilon\leq \epsilon_{\mu}$ for some sufficiently small $\epsilon_{\mu}$. We now show that $u_0\geq \epsilon_{\mu}v_0$. On the contrary, suppose $w=(\epsilon_{\mu}v_0-u_0)^+$ does not vanish identically in $\Omega$. Denote $$\Omega^+=\{x\in\Omega:w(x)>0\}.$$
We will analyse the function $\zeta(t)=E(u_0+tw)$ of $t\geq 0$. This function is convex owing to its definition over $X_0^+$ being convex. Further $\zeta'(t)=\langle E'(u_0+tw),w \rangle$ is nonnegative and nondecreasing for $t>0$. Consequently for $0<t<1$ we have
\begin{align}\label{ineq_appendix}
\begin{split}
0\leq \zeta'(1)-\zeta'(t)&=\langle E'(u_0+w)-E'(u_0+tw),w\rangle\\
&=\int_{\Omega^+}E'(u_0+w)dx-\zeta'(t)\\
&<0
\end{split}
\end{align}
by inequality \eqref{neg_der} and $\zeta'(t)\geq 0$ with $\zeta'(t)$ being nondecreasing for every $t>0$, which is a contradiction. Therefore $w=0$ in $\Omega$ and hence $u_0\geq \epsilon_{\mu}v_0$ a.e. in $\Omega$.\\
Finally, the functional $E$ being strictly convex on $X_0^+$, we conclude that $u_0$ is the only critical point of $E$ in $X_0^+$ with the property $\underset{V}{\text{ess}\inf}u_0>0$ for any compact subset $V\subset\Omega$. Therefore we choose $\underline{u}_{\mu}=u_0$ in the cutoff functional.
\end{proof}
\begin{remark}\label{obs_pos}  We now perform an apriori analysis on a solution (if it exists). Suppose $u$ is a solution to \eqref{main}, then we observe the following
	\begin{enumerate}
		\item $I(u)=I(|u|)$. This implies that $u^-=0$ a.e. in $\Omega$.
		\item In fact a solution to \eqref{main} can be considered to be positive, i.e. $u>0$ a.e. in $\Omega$ due to the presence of the singular term.
	\end{enumerate}
	Thus without loss of generality, we assume that the solution is positive. 
\end{remark}
Precisely, we now have the following result.
\begin{lemma}[Apriori analysis]\label{u_greater_u_lambda}
	Fix a $\mu\in(0,\mu_0)$. Then a solution of \eqref{main}, say $u>0$, is such that $u\geq \underline{u}_{\lambda}$ a.e. in $\Omega$.
\end{lemma}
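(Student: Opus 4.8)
The plan is to compare the solution $u$ of \eqref{main} with the solution $\underline{u}_{\mu}$ of the purely singular auxiliary problem \eqref{auxprob_appendix} by means of the Weak Comparison Principle (Lemma \ref{weak comparison}). The key observation is that, since $u$ solves \eqref{main} with $u>0$ (by Remark \ref{obs_pos}) and $f(x,u), \lambda g(x)|u|^{p-2}u \geq 0$ on the positive cone (recall $g,h\geq 0$), the function $u$ is a \emph{supersolution} of the auxiliary problem, while $\underline{u}_{\mu}$ is its exact solution. Matching the notation of Lemma \ref{weak comparison} with the roles $v = u$ and (the auxiliary) $u = \underline{u}_{\mu}$, one has weakly
\begin{align*}
(a+b\|u\|^p)\mathfrak{L}_p^s u - \mu h(x) u^{-\gamma}
&= \lambda g(x) u^{p-1} + f(x,u) \;\geq\; 0 \\
&= (a+b\|\underline{u}_{\mu}\|^p)\mathfrak{L}_p^s \underline{u}_{\mu} - \mu h(x) \underline{u}_{\mu}^{-\gamma},
\end{align*}
together with $u = \underline{u}_{\mu} = 0$ in $\mathbb{R}^N\setminus\Omega$. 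Lemma \ref{weak comparison} then yields $u \geq \underline{u}_{\mu}$ a.e.\ in $\mathbb{R}^N$, which is the claim (writing $\underline{u}_{\lambda}$ for $\underline{u}_{\mu}$ as in the statement).

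First I would record that, by Remark \ref{obs_pos}, any solution of \eqref{main} may be taken positive, so that the singular term $\mu h(x) u^{-\gamma}$ and the term $\lambda g(x)|u|^{p-2}u = \lambda g(x) u^{p-1}$ are well-defined and the latter is nonnegative; one also uses $f\geq 0$ where needed, or more carefully tests only against $\phi \geq 0$, which is exactly the class of test functions in \eqref{compprinci}. Next I would verify that the integrability condition $h|u|^{-\gamma}\phi \in L^1(\Omega)$ (part of Definition \ref{weaksoln}) holds for both $u$ and $\underline{u}_{\mu}$, so the weak formulations are legitimate; for $\underline{u}_{\mu}$ this is built into Lemma \ref{existence_positive_soln}, and for $u$ it is part of the definition of being a weak solution to \eqref{main}. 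Then I would write out the weak inequality above against an arbitrary $\phi \geq 0$ in $X_0$ and invoke Lemma \ref{weak comparison} verbatim.

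The main obstacle is a bookkeeping subtlety rather than a deep one: the Kirchhoff coefficients $a+b\|u\|^p$ and $a+b\|\underline{u}_{\mu}\|^p$ are generally \emph{different} constants, so the comparison is not for a single fixed operator but for the genuinely nonlocal operator $\mathfrak{m}(\|\cdot\|^p)\mathfrak{L}_p^s(\cdot)$. This is precisely why Lemma \ref{weak comparison} was proved in the form it was — its proof establishes that $w \mapsto \mathfrak{m}(\|w\|^p)\mathfrak{L}_p^s w$ is a monotone operator, with the monotonicity constant bounded below by $a>0$ uniformly in the norms — so one simply has to check that the hypotheses of that lemma are met and then apply it; no new estimate is needed. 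A minor point to state cleanly is that the inequality "$\geq$" in the displayed line holds in the weak sense, i.e.\ after pairing with $\phi \geq 0$ and integrating, which is immediate from the weak formulation of \eqref{main} since each dropped term contributes a nonnegative amount. Once this is in place the conclusion $u \geq \underline{u}_{\lambda}$ a.e.\ in $\Omega$ follows at once, and in particular Remark \ref{key_obs} applies, closing the loop with the existence theory.
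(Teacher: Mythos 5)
Your proposal is correct in substance but packages the argument differently from the paper. You invoke the Weak Comparison Principle (Lemma \ref{weak comparison}) as a black box, after observing that any positive solution $u$ of \eqref{main} is a weak supersolution of the auxiliary problem \eqref{auxprob_appendix} solved by $\underline{u}_{\mu}$. The paper instead runs the comparison by hand: it sets $\underline{\Omega}=\{x\in\Omega:u(x)<\underline{u}_{\lambda}(x)\}$, subtracts the two weak formulations tested against $(\underline{u}_{\lambda}-u)^{+}$, and sandwiches the resulting quantity between $0$ and $\mu\int_{\underline{\Omega}}h(x)(\underline{u}_{\lambda}^{-\gamma}-u^{-\gamma})(\underline{u}_{\lambda}-u)\,dx\leq 0$, using exactly the two ingredients you identify: monotonicity of $w\mapsto(a+b\|w\|^{p})\mathfrak{L}_p^{s}w$ (which handles the mismatch of the Kirchhoff coefficients, your ``main obstacle'') and the fact that $t\mapsto t^{-\gamma}$ is decreasing. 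So the underlying mathematics is identical; your route is shorter and cleaner provided Lemma \ref{weak comparison} is accepted as stated, while the paper's self-contained version does not rely on that lemma's (somewhat delicate) proof and localizes the argument to the set where the desired inequality could fail.

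One caveat you should make explicit rather than wave at: your verification of the hypothesis of Lemma \ref{weak comparison} requires $\lambda g(x)u^{p-1}+f(x,u)\geq 0$, and the sign of $f(x,t)$ for $t>0$ is \emph{not} guaranteed by $(f_1)$--$(f_6)$ (superlinearity at infinity and vanishing at $0$ do not prevent $f$ from being negative on an intermediate range). Your fallback ``test only against $\phi\geq 0$'' does not repair this, since the obstruction is the sign of $f(x,u)$ itself, not of the test function. This is not a defect you introduced --- the paper's own chain of inequalities in \eqref{comp_1} silently discards the term $\int_{\underline{\Omega}}f(x,u)(\underline{u}_{\lambda}-u)\,dx$, which likewise needs $f(x,u)\geq 0$ on $\underline{\Omega}$ --- but an honest write-up should either add a sign hypothesis on $f$ for $t>0$ or restrict the comparison to a regime where it holds.
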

\begin{proof}
	Fix $\mu\in(0,\mu_0)$ and let $u\in X_0$ be a positive solution to \eqref{main} and $\underline{u}_{\lambda}>0$ be a solution to \eqref{auxprob_appendix}. We will show that $u\geq \underline{u}_{\lambda}$ a.e. in $\Omega$. Thus, we let $\underline{\Omega}=\{x\in\Omega:u(x)<\underline{u}_{\lambda}(x)\}$ and from the equation satisfied by $u$, $\underline{u}_{\lambda}$, we have 
	\begin{align}\label{comp_1}
	0\leq&\langle(a+b\|\underline{u}_{\lambda}\|^p)\mathfrak{L}_p^s\underline{u}_{\lambda}-(a+b\|u\|^p)\mathfrak{L}_p^su,\underline{u}_{\lambda}-u\rangle_{\underline{\Omega}}	+\lambda\int_{\underline{\Omega}}g(x)u^{p-1}(\underline{u}_{\lambda}-u)dx\nonumber\\
	\leq&\mu\int_{\underline{\Omega}}h(x)(\underline{u}_{\lambda}^{-\gamma}-u^{-\gamma})(\underline{u}_{\lambda}-u)dx\leq 0.
	\end{align}
	Further we have 
	\begin{align}\label{comp_2}
	\langle(a+b\|\underline{u}_{\lambda}\|^p)\mathfrak{L}_p^s\underline{u}_{\lambda}-(a+b\|u\|^p)\mathfrak{L}_p^su,\underline{u}_{\lambda}-u\rangle_{\underline{\Omega}}&\geq 0. 
	\end{align}
	Hence, from \eqref{comp_1} and \eqref{comp_2}, we obtain $u\geq\underline{u}_{\lambda}$ a.e. in $\Omega^c$.
\end{proof}
\begin{lemma}\label{MP_geometry}
	The redefined functional $\bar{I}$ given in \eqref{modi_func} verifies the mountain pass geometry for $\mu\in(0,\mu_0)$ with $\mu_0<\infty$.
\end{lemma}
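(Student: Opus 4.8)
The plan is to verify the two geometric conditions of the mountain pass theorem for the $C^1$ functional $\bar I$ of \eqref{modi_func}, namely (a) there exist $r>0$ and $\alpha_0>0$ with $\bar I(u)\ge\alpha_0$ whenever $\|u\|=r$, and (b) there exists $e\in X_0$ with $\|e\|>r$ and $\bar I(e)\le 0$ (after, of course, normalising so that $\bar I$ has a suitable value at a base point). The essential analytic inputs are the growth bound \eqref{conv18} coming from $(f_1)$--$(f_4)$, the superlinearity \eqref{conv23} coming from $(f_3)$, the boundedness of $g,h$, the continuous/compact embeddings of Lemma~\ref{embres}, and the pointwise bounds on $\bar F$ obtained from the definition of $\bar f$ together with the control $\underline u_\mu\ge\epsilon_\mu v_0$ from Lemma~\ref{existence_positive_soln}.

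First I would establish the local coercivity near the origin. For $\|u\|=r<1$, using the definition of $\bar F$ and the fact that $\bar f(x,t)$ is bounded for $|t|\le\underline u_\mu$ while it agrees with $\mu h|t|^{-\gamma-1}t+\lambda g|t|^{p}+f(x,t)$ for $|t|>\underline u_\mu$, one gets a bound of the form
\begin{align}\label{MPlower}
\bar I(u)\ \ge\ \frac{a}{p}\|u\|^p-\frac{\lambda\|g\|_\infty}{p\lambda_1}\|u\|^p-C\mu\|h\|_\infty\|u\|^{1-\gamma}-\frac{\epsilon}{p}\|u\|_p^p-\frac{C_\epsilon}{q}\|u\|_q^q ,
\end{align}
where the $|u|^{1-\gamma}$ term is estimated by H\"older's inequality and the embedding $X_0\hookrightarrow L^{1}(\Omega)$, and \eqref{conv18} has been used for $F$. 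Choosing $\epsilon$ small so that $\tfrac{a}{p}-\tfrac{\lambda\|g\|_\infty}{p\lambda_1}-\tfrac{\epsilon}{p}C>0$ and then using $q>p$ together with $r<1$, the right side of \eqref{MPlower} behaves like $c_1 r^p-c_2 r^q-c_3\mu r^{1-\gamma}$; since $1-\gamma<p<q$, for each fixed small $r$ this is strictly positive provided $\mu$ is small enough, i.e. provided $\mu\in(0,\mu_0)$ with $\mu_0$ chosen here. This gives condition (a). (This is essentially the computation \eqref{conv20} restricted to a single value of $r$.)

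Next I would exhibit the descent direction. Fix any $w\in X_0\setminus\{0\}$ with $w\ge\underline u_\mu$ on a set of positive measure; using \eqref{conv23}, $F(x,t)\ge C_0|t|^{2p}-C'|t|$, and the definition of $\bar F$, I would estimate
\begin{align}\label{MPupper}
\bar I(tw)\ \le\ \frac{a}{p}t^p\|w\|^p+\Big(\frac{b}{2p}-C_0 C^{2p}\Big)t^{2p}\|w\|^{2p}+C_2C' t\|w\|+\text{(lower order in }t),
\end{align}
and since $C_0$ can be taken larger than $b/(2pC^{2p})$ by $(f_3)$, the coefficient of $t^{2p}$ is negative, so $\bar I(tw)\to-\infty$ as $t\to+\infty$; picking $e=t_0w$ with $t_0$ large gives $\|e\|>r$ and $\bar I(e)\le 0$. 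The main obstacle — though a technical rather than a conceptual one — is the careful bookkeeping of the modified nonlinearity $\bar f$ near $\{|t|\le\underline u_\mu\}$: one must check that the truncation contributes only a term controlled by $\|h\|_\infty$ and $\|\underline u_\mu\|$-dependent constants (uniformly for $\mu\in(0,\mu_0)$, using $\underline u_\mu\ge\epsilon_\mu v_0$ and the fact that $\underline u_\mu\to 0$ in $X_0$ as $\mu\to 0$), so that neither the lower bound in \eqref{MPlower} nor the upper bound in \eqref{MPupper} is spoiled. Once these estimates are in place, together with $\bar I(0)=0$ and the $C^1$ regularity already recorded after \eqref{modi_func}, the mountain pass geometry for $\bar I$ follows, completing the proof.
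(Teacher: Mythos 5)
Your proposal is correct and follows essentially the same route as the paper's proof: a lower bound on the sphere $\|u\|=r$ obtained from the Sobolev embeddings and the growth conditions, with the pair $(\mu_0,r)$ chosen so that the singular contribution $\mu r^{1-\gamma}$ does not destroy positivity, together with the observation that $\bar I(tw)\to-\infty$ along a ray. If anything, you supply details the paper's very terse argument glosses over — the absorption of $\int_{\Omega}F(x,u)\,dx$ near the origin via \eqref{conv18} and the explicit superlinear mechanism \eqref{conv23} forcing unboundedness below — so no changes are needed.
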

\begin{proof}
	By the Sobolev embedding we obtain$$\bar{I}(u)\geq\frac{a}{p}\|u\|^p+\frac{b}{2p}\|u\|^{2p}-\frac{\lambda\|g\|_{\infty}C_1}{p}\|u\|^p-\frac{\mu\|h\|_{\infty}C_2}{1-\gamma}\|u\|^{1-\gamma}-\int_{\Omega}F(x,u)dx.$$
	where $C_1, C_2>0$ are uniform constants that are independent of the choice of $u$ and $F(x,t)=\int_{0}^{t}f(x,\omega)d\omega$. Now for a pair $(\mu,r)$, sufficiently small $\mu>0$ say $\mu_0$, we have that $\frac{a}{p}\|u\|^p+\frac{b}{2p}\|u\|^{2p}-\frac{\mu\|h\|_{\infty}C_2}{1-\gamma}\|u\|^{1-\gamma}>0$ for each $\mu\in(0,\mu_0)$ and $\|u\|=r$ sufficiently small. Define $a(r)=\frac{a}{p}r^p+\frac{b}{2p}r^{2p}-\frac{\mu\|h\|_{\infty}C_2}{1-\gamma}r^{1-\gamma}$. Therefore, to sum it up we have
	$$\bar{I}(u)\geq a(r)>0$$
	for any $\mu\in(0,\mu_0)$ and for every $u$ such that $\|u\|=r$. On the other hand, taking $u\in X_0$ and $t\geq 0$ we have $\bar{I}(tu)\rightarrow-\infty$ as $t\rightarrow\infty$. This verifies the second condition of the Mountain pass theorem.
\end{proof}

\section*{Conclusions}
Existence of infinitely many solutions to the problem in \eqref{main} has been proved. In addition, a weak comparison result has been proved. It has also been shown that the solutions are in $L^{\infty}(\Omega)$ when $K(x)=|x|^{-N-sp}$. Further for this particular kernel $K$, it has also been proved that the $C^1$ minimizers are the $W_0^{s,p}$ minimizers as well. Some future scope of work on this line would be to prove that the $C^1$ minimizers are the $W_0^{s,p}$ minimizers as well for a general Kernel satisfying \eqref{kernel}.

\section*{Acknowledgement}
The author thanks S. Ghosh for the numerous discussion sessions and the constructive criticisms on the article. Thanks are due to the anonymous reviewers for their constructive comments that led to the improvement of this manuscript. The author also dedicates this article to thousands of labourers and workers of India who, during this COVID19  pandemic, have lost their lives travelling on foot for thousands of kilometres to reach their respective homes.


\end{document}